\title{A $p$-adic interpretation of some integral identities for Hall-Littlewood polynomials}
\author{Vidya Venkateswaran}
\address{Department of Mathematics, Massachusetts Institute of Technology, Cambridge, MA 02139}
\email{\href{mailto:vidyav@caltech.edu}{vidyav@math.mit.edu}}
\thanks{Research supported by NSF Mathematical Sciences Postdoctoral Research Fellowship DMS-1204900}
\subjclass[2000]{33D52, 33D80}
\keywords{Hall-Littlewood polynomials, integral identities, $p$-adic representation theory}
\definecolor{Blue}{rgb}{0.3, 0.3, 0.9}
\newtheorem{theorem}{Theorem}[section]
\newtheorem{lemma}[theorem]{Lemma}
\newtheorem{proposition}[theorem]{Proposition}
\theoremstyle{definition}
\newtheorem{definition}[theorem]{Definition}
\newtheorem*{example}{Example}
\theoremstyle{remark}
\newtheorem*{remarks}{Remarks}
\begin{document}

\maketitle

\begin{abstract}
If one restricts an irreducible representation $V_{\lambda}$ of $Gl_{2n}$ to the orthogonal group (respectively the symplectic group), the trivial representation appears with multiplicity one if and only if all parts of $\lambda$ are even (resp. the conjugate partition $\lambda'$ is even).  One can rephrase this statement as an integral identity involving Schur functions, the corresponding characters.  Rains and Vazirani considered $q,t$-generalizations of such integral identities, and proved them using affine Hecke algebra techniques.  In a recent paper, we investigated the $q=0$ limit (Hall-Littlewood), and provided direct combinatorial arguments for these identities; this approach led to various generalizations and a finite-dimensional analog of a recent summation identity of Warnaar.  In this paper, we reformulate some of these results using $p$-adic representation theory; this parallels the representation-theoretic interpretation in the Schur case.  The nonzero values of the identities are interpreted as certain $p$-adic measure counts.  This approach provides a $p$-adic interpretation of these identities (and a new identity), as well as independent proofs.  As an application, we obtain a new Littlewood summation identity that generalizes a classical result due to Littlewood and Macdonald.  Finally, our $p$-adic method also leads to a generalized integral identity in terms of Littlewood-Richardson coefficients and Hall polynomials.

\end{abstract}

\section{Introduction}
A crucial problem in representation theory can be described in the following way: let $G$ and $H$ be complex algebraic groups, with an embedding $H \hookrightarrow G$.  Also let $V$ be a completely reducible representation of $G$, and $W$ an irreducible representation of $H$.  What information can one obtain about $[V,W] := \text{dim } \text{Hom}_{H}(W,V)$, the multiplicity of $W$ in $V$? Here $V$ is viewed as a representation of $H$ by restriction.  Such branching rules have important connections to physics as well as other areas of mathematics.  There are often beautiful combinatorial objects describing these multiplicities.  One prototypical example is that of the symmetric groups $G = S_{n}$ and $H= S_{n-1}$: the resulting rule has a particularly nice description in terms of Young tableaux.  

Two particularly interesting examples involving matrix groups are the restriction of $Gl(2n)$ to $Sp(2n)$ (the symplectic group) and $Gl(n)$ to $O(n)$ (the orthogonal group); the combinatorics of these branching rules was first developed by D. Littlewood and continues to be a well-studied and active area at the forefront of algebraic combinatorics and invariant theory.  These pairs are also important because they give examples of symmetric spaces.  That is, $G$ is a reductive algebraic group and $H$ is the fixed point set of an involution on $G$; $S = G/H$ is the resulting symmetric space.  The multiplicities in these branching rules are given in terms of Littlewood-Richardson coefficients, another important entity described in terms of tableaux and and lattice permutations.  In fact, since Schur functions are characters of irreducible polynomial representations of $Gl(2n)$, one may rephrase these rules in terms of Schur functions and symplectic characters (respectively, orthogonal characters).  This gives the following integral identities 
\begin{theorem} \label{symplecticgp}  \cite{H, Mac}
(1) For any even integer $n \geq 0$, we have
\begin{align*}
\int_{S \in Sp(n)} s_{\lambda}(S) dS
=\begin{cases} 1, &\text{if all parts of $\lambda$ have even multiplicity} \\
0, & \text{otherwise}
\end{cases}
\end{align*}
(where the integral is with respect to Haar measure on the symplectic group).

(2) For any integer $n \geq 0$ and partition $\lambda$ with at most $n$ parts, we have 
\begin{align*}
\int_{O \in O(n)} s_{\lambda}(O) dO
= \begin{cases} 1, &\text{if all parts of $\lambda$ are even } \\
0, & \text{otherwise}
\end{cases}
\end{align*}
(where the integral is with respect to Haar measure on the orthogonal group).
\end{theorem}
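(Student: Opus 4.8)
The plan is to read each integral as a dimension of invariants and then evaluate that dimension by symmetric-function means. Since $s_\lambda$ is the character of the irreducible polynomial $GL_n$-module $V_\lambda$, its restriction to the subgroup $G\in\{O(n),Sp(n)\}$ has character $g\mapsto s_\lambda(g)$, and the averaging operator $P=\int_G\rho_\lambda(g)\,dg$ is the projection of $V_\lambda$ onto its space of $G$-fixed vectors. Taking the trace gives
\[
\int_G s_\lambda(g)\,dg=\operatorname{tr}P=\dim V_\lambda^{\,G}=[\,\mathbf 1:V_\lambda|_G\,],
\]
the multiplicity of the trivial representation. Thus each identity is exactly the branching-multiplicity statement advertised in the abstract, and the whole problem becomes: compute this multiplicity.

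To compute all multiplicities at once I would package them in a generating function. Evaluating the Cauchy identity $\prod_{i,j}(1-x_iy_j)^{-1}=\sum_\lambda s_\lambda(x)s_\lambda(y)$ at $y=(\text{eigenvalues of }g)$ and integrating term by term yields
\[
\int_G\prod_i\frac{1}{\det(1-x_i\,g)}\,dg=\sum_\lambda s_\lambda(x)\int_G s_\lambda(g)\,dg.
\]
On the other hand, Littlewood's two classical identities read
\[
\sum_{\mu\ \mathrm{even}}s_\mu(x)=\prod_{i\le j}\frac{1}{1-x_ix_j},\qquad\sum_{\mu':\ \mathrm{even}}s_\mu(x)=\prod_{i<j}\frac{1}{1-x_ix_j}.
\]
Since the $s_\lambda(x)$ are linearly independent, it therefore suffices to show that the left-hand matrix integral equals $\prod_{i\le j}(1-x_ix_j)^{-1}$ when $G=O(n)$ and $\prod_{i<j}(1-x_ix_j)^{-1}$ when $G=Sp(n)$; matching the coefficient of $s_\lambda(x)$ on both sides then gives the asserted values $1$ (in the even case) and $0$ (otherwise).

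To evaluate the matrix integrals I would invoke the Weyl integration formula, writing each as an integral over the maximal torus against the appropriate Weyl measure. The eigenvalues of $g$ occur in reciprocal pairs $z_k^{\pm1}$ (together with a fixed eigenvalue $\pm1$ in the odd orthogonal components), so $\prod_i\det(1-x_i g)^{-1}$ becomes an explicit Laurent-series density and the integral collapses, by a Cauchy/constant-term evaluation, to the advertised product over pairs $i\le j$ (respectively $i<j$). Equivalently, one recognizes this generating function as the Hilbert series supplied by the first fundamental theorem of invariant theory: the invariants of $O(n)$ (respectively $Sp(n)$) are generated by the defining symmetric (respectively alternating) bilinear form, which produces exactly the $i\le j$ (respectively $i<j$) factors.

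The main obstacle is the range of validity rather than the term-by-term bookkeeping. Littlewood's identities and the naive restriction rule are exact only in a stable range, where $\ell(\lambda)$ is small relative to $n$; beyond it the restriction of $V_\lambda$ picks up modification terms, and indeed the clean value $1$ can fail for short groups (for example $s_{(1^4)}$ vanishes identically on $Sp(2)$ although $(1^4)'=(4)$ is even). I would therefore work under the stated hypotheses — in the orthogonal case $\lambda$ has at most $n$ parts — and argue either that the modification terms never meet the trivial isotypic component, or by carrying out the torus integral directly so that the cancellations yielding the clean product are manifest without appeal to stability. Controlling this boundary behavior is where the real work lies.
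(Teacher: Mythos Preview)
The paper does not actually prove this theorem; it is quoted as a classical background result, with the remark that ``Proofs of these identities may be found in \cite{Mac}; they involve structure results for the two Gelfand pairs $(GL_{n}(\mathbb{H}), U_{n}(\mathbb{H}))$ and $(GL_{n}(\mathbb{R}), O_{n}(\mathbb{R}))$.'' So the reference proof proceeds via the theory of zonal spherical functions for those real Gelfand pairs, not by the character/generating-function argument you outline.

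Your approach is a genuine alternative and is essentially the standard symmetric-function proof: interpret the integral as the multiplicity $[\mathbf{1}:V_\lambda|_G]$, assemble all $\lambda$ into the Cauchy kernel, and identify the resulting matrix integral $\int_G \prod_i \det(1-x_i g)^{-1}\,dg$ with the right-hand side of the appropriate Littlewood identity. The Gelfand-pair route in \cite{Mac} buys a uniform structural explanation (multiplicity one from commutativity of the spherical Hecke algebra) and fits naturally with the $p$-adic viewpoint the paper develops; your route is more elementary and self-contained, leaning only on Weyl integration and the classical Littlewood sums, but it requires you to evaluate the torus integral explicitly and to handle the stability issue you flagged.

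On that last point you are right to be cautious. Your counterexample $s_{(1^4)}$ on $Sp(2)$ shows that part~(1) as literally stated needs the implicit hypothesis $\ell(\lambda)\le n$ (otherwise $s_\lambda$ vanishes identically on $n\times n$ matrices), and more substantively the comparison with Littlewood's identity is only clean in the stable range. Your plan to either track the modification terms or carry out the torus constant-term computation directly is the correct way to close that gap; either works, but it is real work and should not be left as a remark.
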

\noindent Proofs of these identities may be found in \cite{Mac}; they involve structure results for the two Gelfand pairs $(GL_{n}(\mathbb{H}), U_{n}(\mathbb{H}))$ and $(GL_{n}(\mathbb{R}), O_{n}(\mathbb{R}))$.  Using the Weyl integration formula, we may rephrase the above identities in terms of the eigenvalue densities for the orthogonal and symplectic groups.  For example, the left hand side of the symplectic integral above can be rewritten as
\begin{equation} \label{schursymplectic}
\frac{1}{2^{n}n!}\int_{T} s_{\lambda}(z_{1}, z_{1}^{-1}, z_{2}, z_{2}^{-1}, \dots, z_{n}, z_{n}^{-1})
 \prod_{1 \leq i \leq n} |z_{i} - z_{i}^{-1}|^{2} \prod_{1 \leq i<j \leq n} |z_{i} + z_{i}^{-1} - z_{j} - z_{j}^{-1}|^{2} dT,
\end{equation}
where 
\begin{align*}
T &= \{ (z_{1}, \dots, z_{n}) : |z_{1}| = \dots = |z_{n}| = 1 \} \\
dT &= \prod_{j} \frac{dz_{j}}{2 \pi \sqrt{-1} z_{j}}
\end{align*}
are the $n$-torus and Haar measure, respectively.  

Macdonald polynomials, $P_{\lambda}(x;q,t)$, are an important family of symmetric polynomials generalizing the Schur polynomials ($P_{\lambda}(x;t,t) = s_{\lambda}(x)$) (see \cite{Mac}).  In \cite{RV}, Rains and Vazirani provided $(q,t)$-generalizations of the restriction identities for Schur functions.  That is, they exhibited densities such that when one integrates a Macdonald polynomial against it (over the $n$-torus), the integral vanishes unless the indexing partition satisfies an explicit condition.  Moreover, when $q=t$, one obtains a known Schur identity.  In fact, they were also able to find Macdonald identities with interesting vanishing conditions, but whose significance is unknown at the Schur level.  To prove these results, Rains and Vazirani used techniques involving affine Hecke algebras; however, this method does not work directly at $q=0$ (another important special case of Macdonald polynomials: the Hall-Littlewood polynomials, $P_{\lambda}(x;t)$), although one can obtain the results as a limit.  

In previous work \cite{VV}, we provided a combinatorial approach for proving the results of Rains and Vazirani at $q=0$; this method allowed for several generalizations, one of which provided a connection with a summation identity of Warnaar \cite{W}.  We were also able to use this approach to settle some conjectures of Rains at the $q=0$ level.  In some cases, the affine Hecke algebra technique of Rains and Vazirani allowed them to determine when a given integral vanishes, but did not yield the nonzero values in the case that the integral is non-vanishing.  Using our method, we were able to compute these values explicitly.  

This paper provides an interpretation of the results of \cite{VV} in terms of $p$-adic representation theory.  The motivation for this connection stems from the appearance of Hall-Littlewood polynomials in the representation theory of $p$-adic groups \cite{MacP}, \cite[Ch. V]{Mac}.  In particular, let $G = Gl_{n}(\mathbb{Q}_{p})$, and let $K = Gl_{n}(\mathbb{Z}_{p})$ be its maximal compact subgroup.  Then $G/K$ is the affine Grassmannian and the spherical Hecke algebra $\mathcal{H}(G,K)$ is the convolution algebra of compactly supported, $K$-bi-invariant, complex valued functions on $G$; it has a basis given by $\{c_{\lambda}\}_{l(\lambda)\leq n}$, where $c_{\lambda}$ is the characteristic function of the double coset $Kp^{\lambda}K$ and $p^{\lambda} = diag(p^{\lambda_{1}}, \dotsc, p^{\lambda_{n}})$.  Macdonald provides a Plancherel theorem in this context, where the zonal spherical functions are given in terms of Hall-Littlewood polynomials with $t=p^{-1}$.  One consequence of this is another interpretation of the statement of Hall-Littlewood orthogonality:
\begin{proposition}\cite{MacP}, \cite[Ch. V]{Mac} For partitions $\lambda, \mu$ of length at most $n$, we have
\begin{equation*}
\int_{T} P_{\lambda}(z_{1}, \dots, z_{n};p^{-1})P_{\mu}(z_{1}^{-1}, \dots, z_{n}^{-1}; p^{-1}) \tilde \Delta_{S}^{(n)}(z;p^{-1}) dT 
= \frac{n!}{v_{n}(p^{-1})} p^{-\langle \lambda, \rho\rangle - \langle \mu, \rho \rangle}\int_{G} c_{\lambda}(g) c_{\mu}(g) dg,
\end{equation*}
where $\rho = \frac{1}{2}(n-1, n-3, \dots, 1-n)$ and $v_{n}(p^{-1}) = \big(\prod_{i=1}^{n} (1-p^{-i})\big)/(1-p^{-1})^{n}$.
\end{proposition}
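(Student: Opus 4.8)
The plan is to evaluate the two sides separately and observe that each equals $\delta_{\lambda\mu}\, n!\, v_\lambda(p^{-1})^{-1}$. Throughout set $t = p^{-1}$, write $\langle 2\rho, \lambda\rangle = \sum_{i}(n+1-2i)\lambda_i$, and let $v_m(t) = \prod_{i=1}^{m}(1-t^i)/(1-t)$ and $v_\lambda(t) = \prod_{i\geq 0} v_{m_i(\lambda)}(t)$, where $m_i(\lambda)$ counts the parts of $\lambda$ equal to $i$ and $m_0(\lambda) = n - \ell(\lambda)$. All of the genuinely $p$-adic content sits on the right-hand side; the left-hand side is handled by classical Hall-Littlewood orthogonality on the torus.

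I would treat the right-hand side first. Since $\lambda$ and $\mu$ are dominant, the double cosets $\{Kp^{\nu}K\}$ are pairwise disjoint, so as characteristic functions $c_\lambda c_\mu = \delta_{\lambda\mu}\, c_\lambda$; normalizing Haar measure by $\mathrm{vol}(K)=1$ gives
\[
\int_G c_\lambda(g)\, c_\mu(g)\, dg = \delta_{\lambda\mu}\,\mathrm{vol}(Kp^\lambda K) = \delta_{\lambda\mu}\, N_\lambda ,
\]
where $N_\lambda = [Kp^\lambda K : K]$ is the number of left $K$-cosets in the double coset. The essential input is the coset-count
\[
N_\lambda = p^{\langle 2\rho, \lambda\rangle}\,\frac{v_n(t)}{v_\lambda(t)},
\]
which one proves by identifying left cosets in $Kp^\lambda K$ with $\mathbb{Z}_p$-lattices in $\mathbb{Q}_p^n$ of cotype $\lambda$ and enumerating them (for instance $N_0 = 1$, and for $n=2$ one has $N_{(1,0)} = p+1$, the number of lines in $\mathbb{F}_p^2$). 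On the support $\lambda=\mu$ we have $p^{-\langle\lambda,\rho\rangle - \langle\mu,\rho\rangle} = p^{-\langle 2\rho,\lambda\rangle}$, so substituting into the right-hand side and cancelling this against the $p^{\langle 2\rho,\lambda\rangle}$ in $N_\lambda$ collapses the expression to $\delta_{\lambda\mu}\, n!\, v_\lambda(t)^{-1}$.

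For the left-hand side I would invoke the orthogonality of Hall-Littlewood polynomials. With the density $\tilde\Delta_S^{(n)}(z;t) = \prod_{i\neq j}(1-z_iz_j^{-1})/(1-t z_iz_j^{-1})$, this reads
\[
\int_T P_\lambda(z;t)\, P_\mu(z^{-1};t)\, \tilde\Delta_S^{(n)}(z;t)\, dT = \delta_{\lambda\mu}\,\frac{n!}{v_\lambda(t)},
\]
which is exactly the value just obtained on the right, so comparing the two finishes the proof. As an internal check, for $\lambda=\mu=0$ both sides reduce to $n!\, v_n(t)^{-1}$, the torus mass of the weight $\tilde\Delta_S^{(n)}$.

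The main obstacle, and the only step that is not either formal (disjointness of the double cosets) or a direct citation of Hall-Littlewood orthogonality, is the coset formula for $N_\lambda$; it is precisely this lattice enumeration that produces the factors $p^{\langle\lambda,\rho\rangle}$ and $v_n(t)$ appearing in the statement. An equivalent but more structural route would pass through the Satake isomorphism: one shows that the spherical transform carries $c_\lambda$ to $p^{\langle\lambda,\rho\rangle}P_\lambda(z;t)$ and that Macdonald's Plancherel density for $\mathcal{H}(G,K)$ is $\tilde\Delta_S^{(n)}$, so that the asserted identity becomes the statement that the spherical transform is an isometry. This reformulation, however, ultimately rests on the same double-coset computation.
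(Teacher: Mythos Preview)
Your approach is exactly the one the paper indicates: the proposition is stated there with a citation and is followed only by the remark that the double cosets $Kp^{\lambda}K$ are pairwise disjoint (so the right-hand side vanishes unless $\lambda=\mu$) and that $\mathrm{vol}(Kp^{\lambda}K)$ is computed in \cite{MacP}; this, together with Hall--Littlewood orthogonality on the torus for the left-hand side, is precisely your argument.

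One normalization wrinkle is worth flagging. The paper defines
\[
\tilde\Delta_S^{(n)}(x;t)=\frac{1}{n!}\prod_{i\neq j}\frac{1-x_ix_j^{-1}}{1-tx_ix_j^{-1}}
\]
\emph{with} the prefactor $1/n!$, whereas you wrote the density without it. With the paper's convention, Hall--Littlewood orthogonality gives $\delta_{\lambda\mu}/v_\lambda(t)$ on the left, not $\delta_{\lambda\mu}\,n!/v_\lambda(t)$; e.g.\ for $n=2$, $\lambda=\mu=0$ one computes $\int_T \tilde\Delta_S^{(2)}\,dT = 1/(1+t)$, while your right-hand side evaluation gives $2/(1+t)$. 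Your computation is internally consistent and in effect shows that, under the paper's normalization of $\tilde\Delta_S^{(n)}$, the factor $n!$ on the right of the displayed identity is superfluous (equivalently, the density on the left should appear without the $1/n!$). This is a bookkeeping issue and does not affect the method, which coincides with the paper's.
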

\noindent Here $\tilde \Delta_{S}$ is the symmetric $q=0$ Macdonald density \cite{RV}.  Since the double cosets $Kp^{\lambda}K$ and $Kp^{\mu}K$ do not intersect unless $\lambda = \mu$, the right hand side vanishes unless $\lambda = \mu$.  In the case $\lambda = \mu$, one may also compute the $p$-adic measure of $Kp^{\lambda}K$ using \cite{MacP}; in particular, this provides an alternate approach for computing the left-hand side of the integral.  Given the structural similarity between orthogonality and the vanishing results of \cite{VV}, we were lead to search for $p$-adic interpretations of the latter results.   

In this paper, we show that the vanishing results for Hall-Littlewood polynomials have a $p$-adic interpretation analogous to that of the Schur branching rules.  We also consider some evaluation identities, and show that they, too, may be proved using $p$-adic representation theory.  More precisely, let $F$ be a non-archimedean local field with residual field of odd characteristic.  Let $E$ be an unramified quadratic extension of $F$.  We set up the following cases:

\begin{center}
\begin{tabular}{|c|c|c|}
\hline
Cases & $G$ & $H$ \\
\hline
\textbf{Case 1} &  $Gl_{2n}(F)$ & $Gl_{n}(E)$ \\
\hline
\textbf{Case 2} & $Gl_{2n}(E)$ & $Gl_{2n}(F)$ \\
\hline
\textbf{Case 3} & $Gl_{2n}(F)$ & $Sp_{2n}(F)$ \\
\hline
\textbf{Case 4} & $Gl_{2n}(F)$ & $Gl_{n}(F) \times Gl_{n}(F)$ \\
\hline
\end{tabular}
\end{center}

\noindent For simplicity, we will assume $F = \mathbb{Q}_{p}$ and $E = \mathbb{Q}_{p}(\sqrt{a})$, for $p$ an odd prime and $a$ prime to $p$ and without a square root.  However, our arguments used in this paper apply to \textit{any} $F,E$ satisfying the above conditions.  Let $K$ denote the maximal compact subgroup of $G$ and $K'$ the maximal compact subgroup of $H$.  In all four cases, there is an embedding of $H$ inside $G$, and an involution on $G$ that has $H$ as its set of fixed points; $S := G/H$ is the resulting $p$-adic symmetric space (see Background subsection 2 for more details).  For these symmetric spaces, relative zonal spherical functions and a Plancherel theorem were computed by Offen in \cite{Offen} and Hironaka-Sato in \cite{HS}.  The method used is that of Casselman and Shalika \cite{C, CS}, who provide another derivation of Macdonald's formula for zonal spherical functions (see \cite{MacP} for the general reductive group case) using the theory of admissible representations of $p$-adic reductive groups.  We use these works to prove the following integral identities in the third section of the paper:

\begin{theorem} \label{genthm}
Let $l(\lambda) \leq 2n$, $l(\mu) \leq n$, and $c_{\lambda} \in \mathcal{H}(G,K)$ be the characteristic function of the double coset $Kp^{\lambda}K$.  Then we have the following integral evaluations
\begin{enumerate}
\item (Case 1)
\begin{multline*}
\frac{1}{Z}\int_{T} P_{\lambda}^{(2n)}(x_{i}^{\pm 1}; p^{-1}) K_{\mu}^{BC_{n}}(x; p^{-1}; \pm p^{-1/2},0,0) \tilde \Delta_{K}^{(n)}(x;p^{-1}; \pm p^{-1/2},0,0) dT \\
= p^{\langle\mu,\rho_{1}\rangle - \langle\lambda, \rho_{2}\rangle} \frac{V_{0}}{V_{\mu}}\int_{H} c_{\lambda}(g_{\mu}h) dh
\end{multline*}
\item (Case 2)
\begin{multline*}
\frac{1}{Z}\int_{T}P_{\lambda}^{(2n)}(x_{i}^{\pm 1};p^{-2}) K_{\mu}^{BC_{n}}(x; p^{-2}; 1, p^{-1},0,0) \tilde \Delta_{K}^{(n)}(x;p^{-2};1,p^{-1},0,0)dT \\
= p^{2\langle \mu,\rho_{1} \rangle - 2 \langle \lambda, \rho_{2} \rangle}  \frac{V_{0}}{V_{\mu}}\int_{H} c_{\lambda}(g_{\mu}k_{0}h) dh
\end{multline*}
\item (Case 3)
\begin{multline*}
\frac{1}{Z}\int_{T} P_{\lambda}^{(2n)}(p^{\pm 1/2}x_{i};p^{-1})P_{\mu}^{(n)}(x^{-1};p^{-2}) \tilde \Delta_{S}^{(n)}(x;p^{-2})dT \\
= p^{\langle \mu, \rho_{3}\rangle - \langle \lambda, \rho_{2}\rangle} \frac{V_{0}}{V_{\mu}}\int_{H} c_{\lambda}(g_{-\mu}h) dh
\end{multline*}
\item (Case 4)
\begin{multline*}
\frac{1}{Z}\int_{T} P_{\lambda}^{(2n)}(x_{i}^{\pm 1}; p^{-1}) K_{\mu}^{BC_{n}}(x; p^{-1}; p^{-1/2}, p^{-1/2},0,0) \tilde \Delta_{K}^{(n)}(x;p^{-1}; p^{-1/2}, p^{-1/2},0,0) dT \\
= p^{\langle\mu,\rho_{1}\rangle - \langle\lambda, \rho_{2}\rangle} \frac{V_{0}}{V_{\mu}}\int_{H} c_{\lambda}(g_{\mu}k_{0}h) dh
\end{multline*}
\end{enumerate}
where $\rho_{1} = (n - 1/2, n-3/2, \dots, 1/2) \in \mathbb{C}^{n}$, $\rho_{2} = (n-1/2, n-3/2, \dots, 1/2 - n) \in \mathbb{C}^{2n}$, $\rho_{3} = (n-1, n-3, \dots, 1-n) \in \mathbb{C}^{n}$ and the normalization $Z$ is the evaluation of the integral at $\lambda = \mu = 0$.  Thus, when $\mu = 0$, up to a scaling factor it is equal to
\begin{equation*}
\int_{H} c_{\lambda}(h) dh.
\end{equation*}
\end{theorem}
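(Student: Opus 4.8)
The plan is to treat all four cases by a single mechanism, modeled on the spherical Plancherel theorem stated above for the group case but transported to the relative setting of the symmetric space $S = G/H$. The central objects are the relative zonal spherical functions on $S$ computed by Offen \cite{Offen} and Hironaka--Sato \cite{HS} via the Casselman--Shalika method \cite{C, CS}. For each case, their work shows that the spherical function attached to an unramified character $z \in T$, evaluated on the Cartan component $g_{\mu}$, is given as a function of $z$ by the specialization of the Koornwinder kernel $K_{\mu}^{BC_{n}}$ (or the Hall--Littlewood polynomial $P_{\mu}^{(n)}$ in Case 3) at the parameter values appearing in the theorem, and that the accompanying density $\tilde\Delta_{K}^{(n)}$ (resp.\ $\tilde\Delta_{S}^{(n)}$) is the corresponding relative Plancherel measure on $T$. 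The first step, therefore, is to assemble this dictionary case-by-case, matching the $L$-factors of the relative $c$-function for $H\backslash G/K$ to the Koornwinder parameters recorded in each line (for instance $\pm p^{-1/2}$ in Case 1 and $(1,p^{-1})$ in Case 2).

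Next I would set up the relative Cartan decomposition $G = \bigsqcup_{\mu} K g_{\mu} H$, which indexes the $(K,H)$-double cosets by dominant data $\mu$, and fix the representatives $g_{\mu}$, $g_{-\mu}$, and $g_{\mu}k_{0}$; the element $k_{0}$ in Cases 2 and 4 absorbs the ramification/twist coming from the quadratic character of $E/F$. With this in hand, the right-hand side $\int_{H} c_{\lambda}(g_{\mu}h)\,dh$ becomes the $H$-Haar volume of $\{h : g_{\mu}h \in K p^{\lambda} K\}$, that is, the pairing of the double coset $K p^{\lambda} K$ against the relative coset $g_{\mu}H$. The function $\Phi_{\lambda}(g) = \int_{H} c_{\lambda}(gh)\,dh$ is left $K$-invariant, hence descends to an element of $C_{c}(K\backslash G/H)$ whose values on the Cartan components are exactly these $p$-adic volumes, and the normalization $V_{\mu}$ is the relative volume $\mathrm{vol}(K g_{\mu} H)$, with $V_{0}$ its value at $\mu = 0$.

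The heart of the argument is the relative Plancherel inversion: the spherical transform of $c_{\lambda}$, restricted to the relevant torus embedding of $S$ (the substitutions $x_{i}^{\pm 1}$, resp.\ $p^{\pm 1/2}x_{i}$ in Case 3), is the Hall--Littlewood polynomial $P_{\lambda}^{(2n)}$ by the Casselman--Shalika/Macdonald formula for the spherical function of $G$. Inverting this transform over $T$ against the relative spherical function $K_{\mu}^{BC_{n}}$ with respect to $\tilde\Delta\,dT$ recovers the single Cartan value $\Phi_{\lambda}(g_{\mu})$, which is the $p$-adic volume on the right; unlike the group case this is genuinely an inversion rather than a diagonal orthogonality, since $K p^{\lambda}K$ can meet $g_{\mu}H$ for a range of $\lambda$, and the vanishing results of \cite{VV} are precisely the statement that these volumes vanish for certain $(\lambda,\mu)$. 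The prefactor $p^{\langle\mu,\rho_{1}\rangle - \langle\lambda,\rho_{2}\rangle}$ (and its analogues) arises from reconciling the monic normalization of the Hall--Littlewood and Koornwinder polynomials against the normalization $\Omega(e)=1$ of the spherical functions, with the $\rho$-shifts recording the half-sum-of-roots/modular factors in the Casselman--Shalika formula. Finally, dividing by $Z$ (the $\lambda=\mu=0$ evaluation) clears the overall constant, and setting $\mu=0$ forces $g_{0}=e$ and $V_{0}/V_{0}=1$, recovering $\int_{H} c_{\lambda}(h)\,dh$ up to the stated scaling.

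I expect the main obstacle to be the case-by-case verification of the parameter dictionary and the exact power of $p$. The delicate point is that the relative $c$-function governing the spherical function --- and hence the correct Koornwinder parameters and the precise $\rho$-shift --- depends sensitively on the structure of each symmetric space (split versus the unramified quadratic twist, the symplectic involution, the Levi $Gl_{n}(F)\times Gl_{n}(F)$). Extracting these from \cite{Offen, HS} and confirming that the $t = p^{-1}$ (resp.\ $p^{-2}$) and parameter specializations match the densities $\tilde\Delta_{K}^{(n)}$ and $\tilde\Delta_{S}^{(n)}$ of \cite{RV} exactly, including the twist element $k_{0}$ and the sign ambiguity $\pm p^{-1/2}$ coming from the quadratic residue character, will require the most care.
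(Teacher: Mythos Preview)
Your approach is essentially the paper's: both compute the single intermediary quantity $\int_{S}(c_{\lambda}\star ch_{0})(s)\,ch_{\mu}(s)\,ds$ (equivalently, your $\Phi_{\lambda}$ evaluated at the Cartan point, weighted by an orbit volume) in two ways --- once via the relative Plancherel/Parseval identity of Offen and Hironaka--Sato to obtain the torus integral, and once directly as the $H$-measure $\int_{H}c_{\lambda}(g_{\mu}h)\,dh$ on the right.

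Two small misidentifications to flag. First, $V_{\mu}$ in the paper is defined as the reciprocal of the Koornwinder (resp.\ Hall--Littlewood) norm integral, not as $\mathrm{vol}(Kg_{\mu}H)$; the relation is $\mathrm{meas}(K\cdot d_{\mu}) = p^{2\langle\mu,\rho_{1}\rangle}V_{0}/V_{\mu}$ (with the appropriate $\rho$ and power of $p$ in each case), so your bookkeeping of the prefactor will be off if you literally set $V_{\mu}$ equal to the orbit volume. Second, the element $k_{0}$ in Cases 2 and 4 has nothing to do with a quadratic character or ramification: it appears simply because the chosen base point $s_{0}=I_{2n}$ of $S$ differs from the Cartan representative $d_{0}$, so one writes $s_{0}=k_{0}\cdot d_{0}$ for some $k_{0}\in K$ and then $\mathrm{Stab}(d_{0})=k_{0}^{-1}Hk_{0}$. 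Note in particular that Case 4 ($H=Gl_{n}(F)\times Gl_{n}(F)$) involves no field extension at all, so your stated rationale for $k_{0}$ cannot be right there.
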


\noindent Explicit formulas for $V_{\mu}$, $g_{\mu} \in G$, and $k_{0} \in K$ for each case may be found in Sections 2 and 3, respectively.  Also, $K_{\mu}^{BC_{n}}(x;t;a,b,c,d)$ are the Koornwinder polynomials at $q=0$; they are generalizations of symplectic and orthogonal group characters and multivariate generalizations of Askey-Wilson polynomials, see \cite{K}.  These Laurent polynomials are invariant under permutations of variables and inverting variables.  We note the symmetric function theory interpretation of the above identities: in Case 1 for example, if one expands the specialized Hall-Littlewood polynomial $P_{\lambda}^{(2n)}(x_{i}^{\pm 1};t)$ in terms of the corresponding Koornwinder basis, the integral gives the coefficient on $K_{\mu}^{BC_{n}}(x;t; \pm \sqrt{t},0,0)$ (with $t = p^{-1}$).  

In the first three cases, specializing $\mu = 0$ in the above theorem provides an interpretation of Corollary 14, Corollary 15 and Theorem 22 of \cite{VV} using $p$-adic representation theory.  It also provides a $p$-adic proof of these identities: we will evaluate the right hand side by using the Cartan decompositions for $G$ and $H$, along with some measure computations.  As a consequence, we will show that the $\mu = 0$ version of the integrals above in Case (1) and (3) vanish unless $\lambda = \nu^{2}$ for some $\nu$ (resp. $\lambda = \nu \bar{\nu}$), and if this is satisfied it is a certain $p$-adic measure count.  In Case (2), we obtain an explicit integral evaluation at $\mu = 0$, again in terms of a rational function arising from a certain $p$-adic quantity.  The last case, Case (4), provides a new integral evaluation at $\mu = 0$, which is a $t$-generalization of Theorem \ref{symplecticgp} part (1) (in a different direction than Case (1), which is also a $t$-generalization of the same result).  One can refer to Theorem \ref{new} within the paper for details.

We remark that the above integrals at $\mu = 0$ have a direct application to Littlewood summation identities.  Indeed, as demonstrated in \cite{VV}, one can start with the integral identities and use a procedure of \cite{R} to show that in the $n \rightarrow \infty$ limit, one obtains a Littlewood summation identity.  In particular, we do this for Case (4) in this paper and obtain a new Littlewood summation identity:

\begin{theorem} \label{Littlewood}
The following formal identity holds:
\begin{equation*}
\sum_{\mu, \nu} P_{\mu \cup \nu}(x;t) t^{\langle \mu + \nu, \rho \rangle - \frac{1}{2}\langle \mu \cup \nu, \rho \rangle} \frac{b_{\mu \cup \nu}(t)}{b_{\mu}(t) b_{\nu}(t)} = \prod_{j<k} \frac{1-tx_{j}x_{k}}{1-x_{j}x_{k}} \prod_{j} \frac{1+\sqrt{t}x_{j}}{1-\sqrt{t}x_{j}},
\end{equation*}
where $\rho = (1,3,5, \dots)$.
\end{theorem}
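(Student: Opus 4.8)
The plan is to obtain the identity as the $n \to \infty$ limit of the Case (4) integral evaluation of Theorem~\ref{genthm} at $\mu = 0$, following the procedure of \cite{R} that was already used in \cite{VV}. The device is the Hall--Littlewood Cauchy kernel. I would introduce auxiliary variables $y = (y_1, y_2, \dots)$ and specialize the Cauchy identity $\sum_{\lambda} P_{\lambda}(z;t) Q_{\lambda}(y;t) = \prod_{i,j}\frac{1 - t z_i y_j}{1 - z_i y_j}$ (with $Q_{\lambda} = b_{\lambda}(t)P_{\lambda}$) at the doubled alphabet $z = (x_1, x_1^{-1}, \dots, x_n, x_n^{-1})$. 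This expresses the kernel $\prod_{i,j}\frac{(1 - t x_i y_j)(1 - t x_i^{-1} y_j)}{(1 - x_i y_j)(1 - x_i^{-1} y_j)}$ as $\sum_{\lambda} P_{\lambda}^{(2n)}(x_i^{\pm 1};t)\, Q_{\lambda}(y;t)$, so that integrating against the Case (4) density $\frac1Z\,\tilde\Delta_K^{(n)}(x;p^{-1};p^{-1/2},p^{-1/2},0,0)\,dT$ and interchanging sum and integral reduces every term to the $\mu = 0$ evaluation supplied by Theorem~\ref{genthm}.

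Then I would evaluate each side. On the spectral side, term-by-term integration together with Theorem~\ref{genthm}(4) at $\mu = 0$ gives $\sum_{\lambda} Q_{\lambda}(y;t)\,M_{\lambda}$, where $M_{\lambda} := p^{-\langle \lambda, \rho_2\rangle}\int_H c_{\lambda}(k_0 h)\,dh$ is the $p$-adic measure count (essentially the Haar average of $c_{\lambda}$ over $H$). Because $H = Gl_n(F)\times Gl_n(F)$, the Cartan decomposition of $H$ should expand this measure count as a sum over pairs $(\mu,\nu)$ with $\mu \cup \nu = \lambda$, namely $M_{\lambda} = \sum_{\mu\cup\nu=\lambda} t^{\langle\mu+\nu,\rho\rangle - \frac12\langle\lambda,\rho\rangle}/(b_{\mu}(t)b_{\nu}(t))$; multiplying by $b_{\lambda}(t) = b_{\mu\cup\nu}(t)$ and collecting turns the spectral side into exactly the left-hand sum $\sum_{\mu,\nu}P_{\mu\cup\nu}(y;t)\,t^{\langle\mu+\nu,\rho\rangle - \frac12\langle\mu\cup\nu,\rho\rangle}b_{\mu\cup\nu}(t)/(b_{\mu}(t)b_{\nu}(t))$ of the theorem, with $\rho = (1,3,5,\dots)$ arising as the stable normalization of $\rho_2$. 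On the kernel side, I would evaluate $\frac1Z\int_T \prod_{i,j}\frac{(1 - t x_i y_j)(1 - t x_i^{-1}y_j)}{(1 - x_i y_j)(1 - x_i^{-1}y_j)}\,\tilde\Delta_K^{(n)}(x;p^{-1};p^{-1/2},p^{-1/2},0,0)\,dT$ in closed form --- this is a $q=0$ Koornwinder (Gustafson-type) constant-term integral with boundary parameters $(\sqrt t,\sqrt t,0,0)$ --- and let $n\to\infty$, where the stability of Hall--Littlewood polynomials forces the truncations to converge to the formal product $\prod_{j<k}\frac{1 - t y_j y_k}{1 - y_j y_k}\prod_j\frac{1 + \sqrt t y_j}{1 - \sqrt t y_j}$. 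Equating the two evaluations and renaming $y\mapsto x$ yields the stated identity.

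The main obstacle is making the limiting argument rigorous as a formal identity and carrying out the measure-count matching. Concretely, I must justify the interchange of the infinite sum with the integral and the $n\to\infty$ stabilization (the substance of the procedure of \cite{R}, which requires controlling the support of the $M_{\lambda}$ and the degrees of the contributing $P_{\lambda}$), and I must perform the Cartan-decomposition computation of $M_{\lambda}$ on $H = Gl_n(F)\times Gl_n(F)$ so that it splits as the claimed sum over pairs $(\mu,\nu)$ with the precise power of $t$ and the correct passage from $\rho_2$ to $\rho$. By contrast, the closed-form kernel integral and its limit should follow routinely from the known $q=0$ Koornwinder evaluation together with the $\mu = 0$ normalization $Z$.
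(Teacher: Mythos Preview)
Your proposal is correct and follows essentially the same route as the paper: both start from the Hall--Littlewood Cauchy kernel integrated against the Case~4 Koornwinder density, evaluate the kernel integral in closed form via \cite[Lemma~7.18]{R}, expand term-by-term using the Case~4 evaluation at $\mu=0$, and compute the $n\to\infty$ limit. The only organizational difference is that the paper cites the already-computed evaluation (Theorem~\ref{new}, which packages the Cartan-decomposition measure count of Lemma~\ref{lemma}, Case~4), whereas you propose to redo that $H = Gl_n(F)\times Gl_n(F)$ computation inline; the content is the same.
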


\noindent One can refer to Section 2 for the definition of $b_{\lambda}(t), \mu \cup \nu$ etc.  This is a $t$-generalization of the classical Littlewood identity for Schur functions (see \cite{L}, \cite{Mac}):
\begin{equation*}
\sum_{\substack{\lambda \\ \lambda' \text{ even }}} s_{\lambda}(x) = \prod_{i<j} \frac{1}{1-x_{i}x_{j}}.
\end{equation*}

Our method of using integration over $p$-adic groups supports a generalization of the usual vanishing identities at the Hall-Littlewood level; this is provided in the second part of the paper, and we briefly detail it here.  Note that Theorem \ref{symplecticgp} part (1) provides the coefficient on the trivial character when one expands the restricted Schur function in the basis of symplectic characters.  A natural question, then, is whether there are interesting vanishing conditions for the other coefficients in this expansion.  This is addressed by the following classical result:
\begin{theorem} (Littlewood and Weyl) \label{WL}
If $l(\lambda) \leq n$, we have the branching rule
\begin{equation*}
s_{\lambda}^{(2n)}(x^{\pm 1}) = \sum_{l(\mu) \leq n} sp_{\mu}(x_{1}, \dots, x_{n}) \Bigg( \sum_{\substack{\beta \in \Lambda_{2n}^{+} \\ \beta = \nu^{2}}} c^{\lambda}_{\mu, \beta} \Bigg),
\end{equation*}
where $c^{\lambda}_{\mu, \beta}$ are the Littlewood-Richardson coefficients and $sp_{\mu}$ is an irreducible symplectic character.  
\end{theorem}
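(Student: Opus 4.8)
The plan is to recognize the statement as the classical Littlewood restriction rule for the pair $GL_{2n} \supset Sp_{2n}$ and to prove it entirely at the level of symmetric functions, playing a pair of Cauchy identities against the Littlewood identity already quoted above. First I would observe that $s_{\lambda}^{(2n)}(x^{\pm 1})$ is invariant under the hyperoctahedral group acting on $x = (x_{1}, \dots, x_{n})$ (permutations together with the inversions $x_{i} \mapsto x_{i}^{-1}$), since the substitution $x_{n+i} = x_{i}^{-1}$ merely permutes the $2n$ Schur variables. The symplectic characters $\{ sp_{\mu}(x_{1}, \dots, x_{n}) \}_{\ell(\mu) \le n}$ form a basis of this ring of invariants, so there is a unique expansion $s_{\lambda}^{(2n)}(x^{\pm 1}) = \sum_{\mu} m_{\lambda \mu}\, sp_{\mu}(x)$, and the task reduces to identifying $m_{\lambda \mu} = \sum_{\beta = \nu^{2}} c^{\lambda}_{\mu, \beta}$.

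To pin down these coefficients I would introduce auxiliary variables $y = (y_{1}, \dots, y_{m})$ with $m \ge \ell(\lambda)$ and compare two Cauchy expansions of the same kernel. On the one hand, the ordinary Cauchy identity in the $2n$ variables $x^{\pm 1}$ and the $m$ variables $y$ gives
\[\sum_{\lambda} s_{\lambda}(x^{\pm 1}) s_{\lambda}(y) = \prod_{i=1}^{n} \prod_{a=1}^{m} \frac{1}{(1 - x_{i} y_{a})(1 - x_{i}^{-1} y_{a})} =: \Pi.\]
On the other hand, the symplectic Cauchy identity (see \cite{Mac}) reads
\[\sum_{\mu} sp_{\mu}(x) s_{\mu}(y) = \prod_{1 \le a < b \le m}(1 - y_{a} y_{b}) \cdot \Pi.\]
Eliminating $\Pi$ between the two yields
\[\sum_{\lambda} s_{\lambda}(x^{\pm 1}) s_{\lambda}(y) = \Big(\prod_{a<b}(1 - y_{a} y_{b})^{-1}\Big) \sum_{\mu} sp_{\mu}(x) s_{\mu}(y).\]

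The next step is to expand the prefactor by the Littlewood identity recorded above, namely $\prod_{a<b}(1 - y_{a} y_{b})^{-1} = \sum_{\beta\,:\,\beta' \text{ even}} s_{\beta}(y)$, i.e. the sum over those $\beta = \nu^{2}$. Substituting and expanding each product $s_{\beta}(y)\, s_{\mu}(y) = \sum_{\lambda} c^{\lambda}_{\mu,\beta}\, s_{\lambda}(y)$ by the Littlewood-Richardson rule, the right-hand side becomes $\sum_{\lambda} s_{\lambda}(y)\big[\sum_{\mu} sp_{\mu}(x) \sum_{\beta = \nu^{2}} c^{\lambda}_{\mu,\beta}\big]$. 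Since the $s_{\lambda}(y)$ are linearly independent, comparing coefficients of $s_{\lambda}(y)$ gives precisely $s_{\lambda}(x^{\pm 1}) = \sum_{\mu} sp_{\mu}(x) \sum_{\beta = \nu^{2}} c^{\lambda}_{\mu,\beta}$, which is the asserted branching rule.

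Two points require care, and I regard the second as the main obstacle. First, the hypothesis $\ell(\lambda) \le n$ is exactly what makes the formula clean: whenever $c^{\lambda}_{\mu,\beta} \neq 0$ one has $\mu, \beta \subseteq \lambda$, hence $\ell(\mu), \ell(\beta) \le \ell(\lambda) \le n$, so every $sp_{\mu}(x_{1}, \dots, x_{n})$ that occurs is a genuine irreducible symplectic character and no Weyl-group straightening (``modification'') rules intervene; for $n < \ell(\lambda) \le 2n$ the identity would acquire correction terms. Second, the real content lies in the symplectic Cauchy identity itself. The cleanest self-contained route is to prove it combinatorially, matching the two sides as generating functions via King/Berele symplectic tableaux and the associated insertion, or alternatively by a determinant (Cauchy-Binet) manipulation starting from the Weyl character formulas for $sp_{\mu}$ and for $s_{\lambda}(x^{\pm 1})$; establishing this identity rigorously is the step I expect to take the most work.
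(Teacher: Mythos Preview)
The paper does not actually prove this theorem: it is stated in the introduction as a classical result of Littlewood and Weyl and is used as input (in Proposition~\ref{HLSchurvan} and the proof of Theorem~\ref{genvan}) rather than derived. So there is no ``paper's own proof'' to compare against; your proposal supplies what the paper simply quotes.

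That said, your argument is the standard one and is correct. The generating-function computation---pairing the ordinary Cauchy kernel in variables $x^{\pm 1},y$ against the symplectic Cauchy identity and then expanding $\prod_{a<b}(1-y_a y_b)^{-1}$ via the Littlewood identity---does yield the branching rule upon comparing coefficients of $s_\lambda(y)$. Your remark that $\ell(\lambda)\le n$ forces $\mu,\beta\subseteq\lambda$ (hence $\ell(\mu),\ell(\beta)\le n$) is exactly the point: it lets you take $m=n$ auxiliary $y$-variables, which is simultaneously large enough for the $s_\lambda(y)$ with $\ell(\lambda)\le n$ to be linearly independent and small enough for the symplectic Cauchy identity to hold over genuine characters $sp_\mu$ without modification. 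You are also right that the only nontrivial input is the symplectic Cauchy identity; in the spirit of this paper one would simply cite it (it appears, for instance, in \cite{Mac} and in the symplectic-tableau literature you mention), rather than reprove it, so the ``main obstacle'' you flag is not really an obstacle here.
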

\noindent One may rephrase this in terms of an integral identity as follows: for $l(\lambda) \leq n$, the integral
\begin{equation*}
\int_{S \in Sp(2n)} s_{\lambda}(S) sp_{\mu}(S) dS
\end{equation*}
vanishes if and only if $c^{\lambda}_{\mu, \beta} = 0$ for all $\beta = \nu^{2} := (\nu_{1}, \nu_{1}, \nu_{2}, \nu_{2}, \dots) \in \Lambda_{2n}^{+}$.  We prove the following $t$-analog of this result:
\begin{theorem} \label{genvan}  Let $\lambda, \mu \in \Lambda_{n}^{+}$.  Then the following three statements are equivalent:
\begin{enumerate}
\item The generalized integral
\begin{equation*}
\frac{1}{\int_{T} \tilde \Delta_{K}^{(n)}(x; \pm \sqrt{t},0,0;t) dT} \int_{T} P_{\lambda}^{(2n)}(x_{i}^{\pm 1};t) K_{\mu}^{BC_{n}}(x;t; \pm \sqrt{t},0,0) \tilde \Delta_{K}^{(n)}(x; \pm \sqrt{t},0,0;t)dT
\end{equation*}
vanishes as a rational function of $t$. 
\item The Hall polynomials
\begin{equation*}
g^{\lambda}_{\mu, \beta}(t) 
\end{equation*}
vanish as a function of $t$, for all $\beta \in \Lambda_{2n}^{+}$ with $\beta = \nu^{2}$ for some $\nu$.
\item The Littlewood-Richardson coefficients 
\begin{equation*}
c^{\lambda}_{\mu, \beta}
\end{equation*}
are equal to $0$ for all $\beta \in \Lambda_{2n}^{+}$ with $\beta = \nu^{2}$ for some $\nu$.
\end{enumerate}
\end{theorem}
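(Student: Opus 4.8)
The plan is to prove the cycle of equivalences by first disposing of $(2)\Leftrightarrow(3)$, which is purely a statement about Hall polynomials, and then establishing $(1)\Leftrightarrow(2)$ by combining Koornwinder orthogonality with the $p$-adic interpretation furnished by Theorem~\ref{genthm}. For $(2)\Leftrightarrow(3)$ I would invoke the structure theory of Hall polynomials (see \cite{Mac}): each $g^\lambda_{\mu,\beta}(t)$ is a polynomial in $t$ with nonnegative integer coefficients, of degree $n(\lambda)-n(\mu)-n(\beta)$, whose leading coefficient is exactly the Littlewood--Richardson coefficient $c^\lambda_{\mu,\beta}$. Consequently $g^\lambda_{\mu,\beta}(t)$ is the zero polynomial if and only if $c^\lambda_{\mu,\beta}=0$, and this holds $\beta$-by-$\beta$; taking the conjunction over all $\beta=\nu^2$ gives $(2)\Leftrightarrow(3)$ at once.

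The heart of the matter is $(1)\Leftrightarrow(2)$. First I would record the symmetric-function meaning of $(1)$: since $P_\lambda^{(2n)}(x_i^{\pm1};t)$ is $BC_n$-symmetric, it expands as $P_\lambda^{(2n)}(x_i^{\pm1};t)=\sum_\mu a_{\lambda\mu}(t)\,K_\mu^{BC_n}(x;t;\pm\sqrt t,0,0)$, and by orthogonality of the $q=0$ Koornwinder polynomials with respect to $\tilde\Delta_K^{(n)}(x;\pm\sqrt t,0,0;t)$ the normalized integral in $(1)$ equals $a_{\lambda\mu}(t)$ up to a nonzero rational function of $t$ (a ratio of Koornwinder norms). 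Hence $(1)$ holds if and only if $a_{\lambda\mu}(t)\equiv0$. To identify $a_{\lambda\mu}(t)$ with a sum of Hall polynomials I would specialize $t=p^{-1}$ and apply Theorem~\ref{genthm}, Case~1, whose normalization $Z$ coincides with the normalization $\int_T\tilde\Delta_K^{(n)}\,dT$ appearing in $(1)$ (both are the value at $\lambda=\mu=0$, where $P_0=K_0=1$). Thus at $t=p^{-1}$ the quantity in $(1)$ equals $p^{\langle\mu,\rho_1\rangle-\langle\lambda,\rho_2\rangle}\,\tfrac{V_0}{V_\mu}\int_H c_\lambda(g_\mu h)\,dh$, the $p$-adic volume of $\{h\in H: g_\mu h\in Kp^\lambda K\}$ times a positive factor. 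Expanding this volume via the Cartan decomposition and the lattice-counting description of $G/H$ for $G=Gl_{2n}(F)$, $H=Gl_n(E)$ should yield a finite sum, indexed by the square partitions $\beta=\nu^2\in\Lambda_{2n}^+$ occurring in the associated lattice, of nonnegative contributions, each a positive power of $p^{-1}$ times $g^\lambda_{\mu,\beta}(p^{-1})$. This is exactly the $t$-analog of the Littlewood--Weyl branching rule of Theorem~\ref{WL}, with Schur replaced by Hall--Littlewood, symplectic characters by Koornwinder polynomials, and $c^\lambda_{\mu,\beta}$ by $g^\lambda_{\mu,\beta}(t)$.

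With the branching formula $a_{\lambda\mu}(t)=\sum_{\beta=\nu^2} d_{\mu\beta}(t)\,g^\lambda_{\mu,\beta}(t)$ in hand, the equivalence $(1)\Leftrightarrow(2)$ follows from positivity. For every odd prime $p$, each summand $d_{\mu\beta}(p^{-1})\,g^\lambda_{\mu,\beta}(p^{-1})$ is nonnegative (the prefactors $d_{\mu\beta}$ are positive powers of $p^{-1}$ and the Hall polynomials have nonnegative coefficients), so $a_{\lambda\mu}(p^{-1})=0$ if and only if every term vanishes, i.e.\ $g^\lambda_{\mu,\beta}(p^{-1})=0$ for all $\beta=\nu^2$. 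Since the specializations $t=p^{-1}$ are infinitely many and hence Zariski-dense, $a_{\lambda\mu}(t)\equiv0$ as a rational function if and only if $g^\lambda_{\mu,\beta}(t)\equiv0$ for all $\beta=\nu^2$, which is precisely $(1)\Leftrightarrow(2)$.

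The main obstacle is establishing the branching formula of the previous paragraph with explicit, manifestly positive coefficients $d_{\mu\beta}(t)$ — equivalently, computing the orbital integral $\int_H c_\lambda(g_\mu h)\,dh$ exactly in terms of Hall polynomials. This requires a careful analysis of the $K$-orbit structure on $G/H$ and of the intersection of $g_\mu H$ with the double coset $Kp^\lambda K$, generalizing the $\mu=0$ computation underlying Theorem~\ref{genthm}; the delicate point is to show that the lattice count organizes itself into the Hall polynomials $g^\lambda_{\mu,\beta}$ attached to the \emph{square} partitions $\beta=\nu^2$, with no spurious cancellation among the terms. Once this nonnegativity is secured the combinatorial collapse is automatic, which is what makes the $p$-adic volume interpretation the natural engine for the argument.
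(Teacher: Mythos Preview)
Your $(2)\Leftrightarrow(3)$ argument reaches the right conclusion, though the claim that Hall polynomials have nonnegative integer coefficients is false in general; what you actually need (and what the paper invokes as Facts~1 and~2) is that $g^\lambda_{\mu,\beta}(t)\equiv 0$ when $c^\lambda_{\mu,\beta}=0$, and otherwise $g^\lambda_{\mu,\beta}$ has degree $n(\lambda)-n(\mu)-n(\beta)$ with leading coefficient $c^\lambda_{\mu,\beta}\neq 0$. That already gives the equivalence $\beta$-by-$\beta$.

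The real gap is in $(1)\Leftrightarrow(2)$. You correctly identify the obstacle: an exact branching formula expressing $\int_H c_\lambda(g_\mu h)\,dh$ as a \emph{positive} combination of $g^\lambda_{\mu,\beta}(p)$ over square $\beta$. The paper does \emph{not} prove this, and there is no indication such an identity holds in the clean form you want. What the paper proves instead is only an \emph{inequality}: writing the integral as $\sum_{\beta=\nu^2}\mathrm{meas}(Kp^\lambda Kg_\mu\cap K'p^\beta K')$, each summand is a nonnegative $H$-measure bounded above by the corresponding $G$-measure $\mathrm{meas}(Kp^\lambda Kg_\mu\cap Kp^\beta K)$, and a short convolution computation (Lemma~\ref{2d}) shows the latter equals a positive constant times $g^\lambda_{\mu,\beta}(p)$. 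This upper bound suffices for $(2)\Rightarrow(1)$ at each odd prime, hence for all $t$ by density; but it gives no purchase on $(1)\Rightarrow(2)$, since the $H$-measures could in principle vanish without the Hall polynomials vanishing.

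For the reverse implication the paper takes a completely different, and much shorter, route that bypasses the $p$-adic machinery altogether: specialize the integral in $(1)$ to $t=0$, obtaining the Schur-symplectic integral $\int s_\lambda^{(2n)}(x_i^{\pm1})\,sp_\mu(x)\,\tilde\Delta_K^{(n)}(x;0,0,0,0;0)\,dT$, and invoke the classical Littlewood--Weyl branching rule (Theorem~\ref{WL}) to conclude $(3)$ directly. Since $(3)\Rightarrow(2)$ by Fact~1, the cycle closes. So the paper's logic is $(1)\xrightarrow{t=0}$ Schur vanishing $\xrightarrow{\text{Thm~\ref{WL}}}(3)\Rightarrow(2)\xrightarrow{\text{inequality}}(1)$. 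Your proposal to extract $(1)\Rightarrow(2)$ from positivity of an exact branching expansion would require precisely the identity you flag as the main obstacle; the paper's $t=0$ shortcut is what makes the argument go through without it.
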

\noindent The proof of this relies on $p$-adic arguments similar to those used to prove Theorem \ref{genthm}, as well as some technical arguments involving Hall polynomials.  We also provide some examples following the proof of Theorem \ref{genvan} that makes use of known information about the vanishing of the Littlewood-Richardson coefficients (or Hall polynomials).

In the first section of the paper, we'll review some relevant background and notation.  In the second section, we'll give an interpretation of some identities in \cite{VV} using $p$-adic representation theory that may be viewed as an analog of the Schur identities.  Finally, in the last section, we'll use this approach to provide a generalization of the integral identities considered in this paper.

\textbf{Acknowledgements:} The author would like to thank E. Rains for initially suggesting this problem and for many helpful conversations along the way.  She would also like to thank A. Borodin and M. Vazirani for useful discussions.  She also thanks O. Warnaar for pointing out a typo in the density computation in Case 4 in an earlier version of this article.

\section{Background}
\subsection{Symmetric function theory}
We first review the relevant notations that we will be using in this paper.  

Let 
\begin{equation*}
\Lambda_{n}^{+} = \{ (\lambda_{1}, \dots, \lambda_{n}) \in \mathbb{Z}^{n}| \lambda_{1} \geq \cdots \geq \lambda_{n} \geq 0 \}
\end{equation*}
be the set of \textit{partitions}.  We will refer to the $\lambda_{i}$ as the \textit{parts} of $\lambda$.  The \textit{length} of a partition $\lambda$ is the number of nonzero $\lambda_{i}$.  Also let $m_{i}(\lambda)$ be the number of $\lambda_{j}$ equal to $i$ for each $i \geq 0$; this is the \textit{multiplicity} of the part $i$ in $\lambda$.  We will write $\lambda = \mu^{2}$ if $\lambda = (\mu_{1}, \mu_{1}, \mu_{2}, \mu_{2}, \dots)$ and we will say $\lambda$ has all parts occuring with even multiplicity.  Note that this is the same as the conjugate partition $\lambda'$ having all even parts.  For example, if $\lambda = (3,3,2,2,1,1,1,1)$ then $\lambda = \mu^{2}$ with $\mu = (3,2,1,1)$.

Following \cite{Mac}, we define $\lambda \cup \mu$ to be the partition whose parts are those of $\lambda$ and $\mu$, arranged in descending order.  For example, if $\lambda = (3,2,1)$ and $\mu = (2,2)$ then $\lambda \cup \mu = (3,2,2,2,1)$.  Also $\langle \lambda, \mu \rangle = \lambda \cdot \mu = \lambda_{1}\mu_{1} + \cdots + \lambda_{n}\mu_{n}$.

We define
\begin{equation*}
\rho_{1} = (n - 1/2, n-3/2, \dots, 1/2) \in \mathbb{C}^{n}
\end{equation*}
\begin{equation*}
\rho_{2} = (n-1/2, n-3/2, \dots, 1/2 - n) \in \mathbb{C}^{2n}
\end{equation*}
\begin{equation*}
\rho_{3} = (n-1, n-3, \dots, 1-n) \in \mathbb{C}^{n};
\end{equation*}
which will be used throughout the paper.

The symmetric $q=0$ Macdonald density \cite{RV} is
\begin{align} \label{Sd}
\tilde \Delta_{S}^{(n)}(x;t) &= \frac{1}{n!}\prod_{1 \leq i \neq j \leq n} \frac{1-x_{i}x_{j}^{-1}}{1-tx_{i}x_{j}^{-1}}
\end{align}
and the symmetric $q=0$ Koornwinder density \cite{K} is
\begin{equation} \label{Kd}
\tilde \Delta_{K}^{(n)}(x;t;a,b,c,d) = \frac{1}{2^{n}n!} \prod_{1 \leq i \leq n} \frac{1-x_{i}^{\pm 2}}{(1-ax_{i}^{\pm 1})(1-bx_{i}^{\pm 1})(1-cx_{i}^{\pm 1})(1-dx_{i}^{\pm 1})} \prod_{1 \leq i<j \leq n} \frac{1-x_{i}^{\pm 1}x_{j}^{\pm 1}}{1-tx_{i}^{\pm 1}x_{j}^{\pm 1}},
\end{equation}
where we write $1-x_{i}^{\pm 2}$ for the product $(1-x_{i}^{2})(1-x_{i}^{-2})$ and $1-x_{i}^{\pm 1}x_{j}^{\pm 1}$ for $(1-x_{i}x_{j})(1-x_{i}^{-1}x_{j}^{-1})(1-x_{i}^{-1}x_{j})(1-x_{i}x_{j}^{-1})$ etc. 

Hall-Littlewood polynomials $P_{\lambda}^{(n)}(x; t)$ indexed by partitions $\lambda$ with length at most $n$ form an orthogonal basis with respect to (\ref{Sd}).  Similarly, Koornwinder polynomials $K_{\mu}^{(n)}(x;t;a,b,c,d)$ indexed by partitions $\mu$ with length at most $n$ form an orthogonal basis with respect to (\ref{Kd}).  An explicit formula for $K_{\mu}^{(n)}(x;t;a,b,c,d)$ was provided in \cite{VV2}.

We also define
\begin{equation*}
v_{m}(t) = \prod_{i=1}^{m} \frac{1-t^{i}}{1-t} = \frac{\phi_{m}(t)}{(1-t)^{m}}.
\end{equation*}
We use this to define
\begin{align*}
v_{\lambda}(t) &= \prod_{i \geq 0} v_{m_{i}(\lambda)}(t);
\end{align*}
this is the factor that makes the Hall-Littlewood polynomials of type A monic.  Also let
\begin{equation*}
b_{\lambda}(t) = \prod_{i \geq 1} \phi_{m_{i}(\lambda)}(t).
\end{equation*}

Finally, for ease of notation, we set-up some shorthand notation for some specific densities that will be used throughout the paper.  Let $p$ be a prime, and define
\begin{equation} \label{den1}
\Delta_{1} = \tilde \Delta_{K}^{(n)}(x; p^{-1}; \pm p^{-1/2},0,0)
\end{equation}  
\begin{equation} \label{den2}
\Delta_{2} = \tilde  \Delta_{K}^{(n)}(x;p^{-2};1,p^{-1},0,0)
\end{equation}
\begin{equation} \label{den3}
\Delta_{3} = \tilde \Delta_{S}^{(n)}(x;p^{-2})
\end{equation}
\begin{equation} \label{den4}
\Delta_{4} = \tilde \Delta_{K}^{(n)}(x;p^{-1};  p^{-1/2}, p^{-1/2},0,0). 
\end{equation}

\subsection{$P$-adic represention theory}
Let $F$ be a non-archimedean local field with residual field of odd characteristic.  Let $E$ be an unramified quadratic extension of $F$.  We set up the following cases corresponding to the above theorems:

\textbf{Case 1:} $G = Gl_{2n}(F), H = Gl_{n}(E)$ 

\textbf{Case 2:} $G = Gl_{2n}(E), H = Gl_{2n}(F)$

\textbf{Case 3:} $G = Gl_{2n}(F), H = Sp_{2n}(F)$

\textbf{Case 4:} $G = Gl_{2n}(F), H = Gl_{n}(F) \times Gl_{n}(F)$.

For simplicity, from now on we will assume $F = \mathbb{Q}_{p}$ and $E = \mathbb{Q}_{p}(\sqrt{a})$, for $p$ an odd prime and $a$ prime to $p$ and without a square root.  However, the argument applies to \textit{any} $F,E$ satisfying the above conditions.  The number of elements in the residual field of $F$ is $p$, and for $E$ it is $p^{2}$.  Throughout, we will use $K$ to denote the maximal compact subgroup of $G$ (so $K=Gl_{2n}(\mathbb{Z}_{p})$ in Cases 1 and 3 and $K = Gl_{2n}(\mathbb{Z}_{p}(\sqrt{a}))$ in Case 2) and $K'$ the maximal compact subgroup of $H$.  

Define
\begin{equation*}
\Lambda_{n}^{+} = \{ \lambda = (\lambda_{1}, \dots, \lambda_{n}) \in \mathbb{Z}^{n} | \lambda_{1} \geq \cdots \geq \lambda_{n} \geq 0 \}
\end{equation*}
and
\begin{equation*}
\Lambda_{n} = \{ \lambda = (\lambda_{1}, \dots, \lambda_{n}) \in \mathbb{Z}^{n} | \lambda_{1}  \geq \cdots \geq \lambda_{n} \},
\end{equation*}
so that $\Lambda_{n}^{+}$ is the set of partitions with length at most $n$, while $\lambda \in \Lambda_{n}$ is allowed to have negative parts.

We set-up some notation following \cite{Offen}, \cite{HS}.  Let $g \mapsto g^*$ denote the involution on $G$ given by:

\noindent \textbf{Case 1:} $g^* = g^{-1}$.

\noindent \textbf{Case 2:} $g^* = \bar g^{-1}$.

\noindent \textbf{Case 3:} $g^* = g^t$.

\noindent \textbf{Case 4:} $g^* = \epsilon g^{-1} \epsilon$, where
\begin{equation*}
\epsilon = \left( \begin{array}{cc}
I_{n} & 0   \\
0 & -I_{n}  \\  \end{array} \right) \in G
\end{equation*}

Fix the element $s_{0} \in G$ to be
\begin{equation*} 
s_0 = \begin{cases}
\left( \begin{array}{cc}
0 & w_{n}   \\
aw_{n} & 0  \\  \end{array} \right), & \textrm{Case 1}\\
I_{2n}, & \textrm{Case 2}\\
J_n, & \textrm{Case 3} \\
I_{2n}, & \textrm{Case 4}
\end{cases},
\end{equation*}
where $J_n = (\begin{smallmatrix} 0 & I_n\\-I_n & 0\end{smallmatrix})$ and $w_{n}$ is the $n \times n$ matrix with ones on the anti-diagonal and zeroes everywhere else.  Define $S = G \cdot s_{0}$, where the action is $g \cdot s_{0} = gs_{0}g^{*}$.  In all cases, $H$ may be identified with the stabilizer of $s_{0}$ in $G$:

 \noindent In \textbf{Case 1}:
\begin{equation*}
Stab(s_{0}) = \Bigg\{ \left( \begin{array}{cc}
i & j   \\
aw_{n}jw_{n} & w_{n}iw_{n}  \\  \end{array} \right) \in G | i,j \in Gl_{n}(\mathbb{Q}_{p}) \Bigg \} \cong Gl_{n}(\mathbb{Q}_{p}(\sqrt{a})). 
\end{equation*}
\noindent In \textbf{Case 2}:
\begin{equation*}
 Stab(s_{0}) = GL_{2n}(F).
\end{equation*}
\noindent In \textbf{Case 3}:
\begin{equation*}
 Stab(s_{0}) = Sp_{2n}(F)
\end{equation*}
\noindent In \textbf{Case 4}:
\begin{equation*}
Stab(s_{0}) = \Bigg\{ \left( \begin{array}{cc}
g_{1} & 0   \\
0 & g_{2}  \\  \end{array} \right) \in G | g_{1},g_{2} \in Gl_{n}(\mathbb{Q}_{p}) \Bigg \} \cong Gl_{n}(\mathbb{Q}_{p}) \times Gl_{n}(\mathbb{Q}_{p}).
\end{equation*}

In Case 1, we identify the maximal compact subgroup $K' = Gl_{n}(\mathbb{Z}_{p}(\sqrt{a})) \subset Gl_{n}(\mathbb{Q}_{p}(\sqrt{a}))$ with $K \cap H \subset Gl_{2n}(\mathbb{Q}_{p})$.  In the other two cases, $K' = Gl_{2n}(\mathbb{Z}_{p})$ and $Sp_{2n}(\mathbb{Z}_{p})$, respectively.  (In all cases, we identify $K'$ (the maximal compact subgroup of $H$) with its image in $G$ under the corresponding embedding.) The map $\theta: G \rightarrow S$ defined by $\theta(g) = gs_{0}g^{*} = g \cdot s_{0}$ induces a bijection between $G/H$ and $S$.   

Let $\mathcal{H}(G,K)$ be the Hecke algebra of $G$ with respect to $K$; it is the convolution algebra of compactly supported, $K$-bi-invariant, complex valued functions on $G$.  Let $C^{\infty}(K\setminus S)$ be the space of $K$-invariant complex valued functions on $S$.  Also define $\mathcal{S}(K \setminus S)$ to be the $\mathcal{H}(G,K)$-submodule of $K$-invariant functions on $S$ with compact support.
Define a $\mathcal{H}(G,K)$-module structure on $C^{\infty}(K \setminus S)$ via the convolution operation
\begin{align*}
f \star \phi(s) = \int_{G} f(g) \phi(g^{-1} \cdot s) dg
\end{align*}
where $f \in \mathcal{H}(G,K)$ and $\phi \in C^{\infty}(K \setminus S)$ and $dg$ is the Haar measure on $G$ normalized so $\int_{K} dg = 1$.  

\begin{definition} \cite{Offen, HS}
A \textbf{relative spherical function} on $S$ is an eigenfunction $\Omega \in C^{\infty}(K \setminus S)$ of $\mathcal{H}(G,K)$ under this convolution, normalized so that $\Omega(s_{0}) = 1$.  
\end{definition}

Define the elements $d_\lambda$ in $G$ as follows:

\noindent\textbf{Case 1}:
\begin{equation*}
d_{\lambda} = \text{antidiag}.(p^{\lambda_{1}}, \dots, p^{\lambda_{n}}, ap^{-\lambda_{n}}, \dots, ap^{-\lambda_{1}}).
\end{equation*}
\noindent\textbf{Case 2}:
\begin{equation*}
 d_{\lambda} = \text{antidiag}.(p^{\lambda_1},\dotsc,p^{\lambda_n},p^{-\lambda_n},\dotsc,p^{-\lambda_1}).
\end{equation*}
\noindent\textbf{Case 3}:
\begin{equation*}
 d_\lambda = \text{antidiag}.(p^{\lambda_1},\dotsc,p^{\lambda_n},-p^{\lambda_n},\dotsc,-p^{\lambda_1}).
\end{equation*}
\noindent\textbf{Case 4}:
\begin{equation*}
d_{\lambda} = \text{antidiag}.(p^{\lambda_1},\dotsc,p^{\lambda_n},-p^{-\lambda_n},\dotsc,-p^{-\lambda_1}).
\end{equation*}

Note that $d_{0} = s_{0}$ in Cases 1 and 3.  

\begin{theorem} \cite[Proposition 3.1]{Offen}, \cite{HS}
The $K$-orbits of $S$ are given by the disjoint union
\begin{equation*}
S = \cup K \cdot d_{\lambda},
\end{equation*}
varying over $\lambda \in \Lambda_{n}^{+}$.  
\end{theorem}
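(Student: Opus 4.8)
The statement is a Cartan-type decomposition for the symmetric space $S$, so the plan is to establish the two halves separately: that the $d_\lambda$ exhaust $S$ (covering), and that distinct $\lambda$ give distinct $K$-orbits (disjointness). The first move is to transport the problem from $S$ to the group via the bijection $\theta : G/H \xrightarrow{\sim} S$, $\theta(g) = g s_0 g^*$, which intertwines the $K$-action $k \cdot s = k s k^*$ with left translation; thus classifying $K$-orbits on $S$ is the same as describing the double cosets $K \backslash G / H$ and exhibiting each $d_\lambda = \theta(g_\lambda)$ as a representative.

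Next I would realize $K \backslash G$ as the set of lattices in the standard module over the ring of integers $\mathcal{O}$ (since $K$ is the stabilizer of $L_0 = \mathcal{O}^{2n}$, resp.\ $\mathcal{O}_E^{2n}$ in Case 2), so that $K \backslash G / H$ becomes the set of $H$-orbits of such lattices. The content of each case is then a piece of structured linear algebra over the discrete valuation ring $\mathcal{O}$, using that the residue characteristic is odd: in Cases 1 and 2 the group $H$ preserves an $E$-vector-space structure, in Case 3 a symplectic form $s_0 = J_n$, and in Case 4 the grading $F^{2n} = F^n \oplus F^n$ cut out by $\epsilon$. Classifying a lattice relative to this ambient structure is exactly an elementary-divisor problem, and I would run the appropriate adapted Smith normal form: reducing the lattice simultaneously against the integral structure produces a diagonal or antidiagonal canonical shape whose invariants pair up under the symmetry (for an alternating or Hermitian form the elementary divisors occur in $\pm$-pairs), and one checks that this canonical form is precisely $d_\lambda$. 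This yields covering, with $\lambda$ recording the resulting normalized elementary divisors.

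For disjointness I would produce a complete $K$-invariant. Set $\mathrm{inv}(s) = \mathrm{inv}(L_0, s L_0) \in \Lambda_{2n}$, the relative position of $L_0$ and $s L_0$. Since $K$ is stable under $*$, one has $(k s k^*) L_0 = k s L_0$ and $k^{-1} L_0 = L_0$, so $\mathrm{inv}$ is constant on $K$-orbits, and evaluating it on $d_\lambda$ returns $\lambda$ up to the fixed pairing. That $\mathrm{inv}$ actually separates the orbits of the twisted action — rather than merely the ordinary Cartan classes of $S$ viewed inside $G$ — is where one invokes completeness of the structured normal form: two lattices of the same type with equal elementary divisors are carried into one another by an element of $K$ respecting the structure, i.e.\ by $k \mapsto k \, (\cdot) \, k^*$.

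The main obstacle is the covering step in the presence of the extra structure, namely making the classical Smith normal form compatible with the involution $*$ and with integrality at once. One must ensure the reducing transformations can be taken in $K$ rather than merely in $G$, and that they respect the symplectic pairing, the Hermitian form, or the $E$-structure; the pairing of elementary divisors and the passage to the antidiagonal shape of $d_\lambda$ both rely on this. The quadratic-extension cases (1 and 2) additionally require running the argument over $\mathcal{O}_E$ while tracking the nontrivial Galois automorphism, and odd residue characteristic is exactly what is needed to diagonalize the relevant forms and split off hyperbolic planes cleanly, so that the structured normal form goes through.
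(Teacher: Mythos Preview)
The paper does not prove this statement at all: it is quoted as a background result, with the proof deferred entirely to the cited references \cite[Proposition 3.1]{Offen} and \cite{HS}. There is therefore no proof in the paper to compare your proposal against.

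Your outline is a reasonable sketch of how such Cartan-type decompositions for $p$-adic symmetric spaces are typically established in the literature---translating $K$-orbits on $S$ into double cosets $K\backslash G/H$ via $\theta$, interpreting these as $H$-orbits on lattices, and then running a structured elementary-divisor argument adapted to the Hermitian, symplectic, or grading structure---and this is in the spirit of what Offen and Hironaka--Sato do. But as written it remains a plan rather than a proof: the hard work (the ``main obstacle'' you flag) of actually carrying out the integral normal-form reduction compatibly with $*$ in each of the four cases is precisely the content of those references, and you have not done it here. If your goal was to reproduce the paper's treatment, the correct move is simply to cite Offen and Hironaka--Sato as the paper does.
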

Let $ch_{\lambda}$ denote the characteristic function for the $K$-orbit $K \cdot d_{\lambda}$, then the space $\mathcal{S}(K \setminus S)$ is spanned by the functions $\{ ch_{\lambda} | \lambda \in \Lambda_{n}^{+} \}$.  

\begin{theorem} 
(Cartan decomposition for $G$, see \cite{Mac} for example)  We have the disjoint union
\begin{equation*}
G = \cup Kp^{\lambda}K,
\end{equation*}
varying over $\lambda \in \Lambda_{2n}$, and $p^{\lambda}$ denotes the diagonal matrix $\text{diag}.(p^{\lambda_{1}}, p^{\lambda_{2}}, \dots, p^{\lambda_{2n}})$.  
\end{theorem}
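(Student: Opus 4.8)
The plan is to derive this statement from the classical theory of elementary divisors over the ring of integers $\mathcal{O}$ of the relevant local field ($\mathcal{O} = \mathbb{Z}_{p}$ in Cases 1, 3, 4, and $\mathcal{O} = \mathbb{Z}_{p}[\sqrt{a}]$ in Case 2), so that $K = Gl_{2n}(\mathcal{O})$. The first point to record is that in every case $\mathcal{O}$ is a discrete valuation ring whose maximal ideal is generated by $p$: this is clear for $\mathbb{Z}_{p}$, and for the unramified quadratic extension $E/F$ it holds precisely because unramifiedness means $p$ remains a uniformizer of $\mathcal{O}_{E}$ (the residue field merely grows from $p$ to $p^{2}$ elements). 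Consequently every nonzero elementary divisor appearing below is an integer power of $p$, which is what allows the diagonal representatives to be taken of the form $p^{\lambda}$.

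For existence, I would observe that left and right multiplication by $K$ realize exactly the invertible $\mathcal{O}$-row and column operations on a matrix $g \in G$. I would then run the standard Smith normal form reduction: among the entries of $g$, pick one of minimal $p$-adic valuation, move it to the $(1,1)$ position by permutation matrices (which lie in $K$), and use it to clear the remainder of the first row and column---this is possible precisely because that entry divides every other entry in $\mathcal{O}$. Recursing on the complementary $(2n-1) \times (2n-1)$ block produces a diagonal matrix $\mathrm{diag}(p^{a_{1}}, \dots, p^{a_{2n}})$ in $KgK$; a final permutation in $K$ rearranges the exponents into weakly decreasing order, yielding $g \in K p^{\lambda} K$ for some $\lambda \in \Lambda_{2n}$.

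For disjointness, I would prove that $\lambda$ is uniquely determined by the double coset, using the $K$-biinvariance of the minor valuations. For $1 \leq k \leq 2n$ set $\delta_{k}(g)$ to be the minimum $p$-adic valuation occurring among the $k \times k$ minors of $g$. By the Cauchy--Binet formula, each $k \times k$ minor of $k_{1} g k_{2}$ (with $k_{1}, k_{2} \in K$) is an $\mathcal{O}$-linear combination of the $k \times k$ minors of $g$, so $\delta_{k}(k_{1} g k_{2}) \geq \delta_{k}(g)$; applying this with $k_{1}^{-1}, k_{2}^{-1}$ gives equality, so each $\delta_{k}$ is constant on double cosets. A direct computation gives $\delta_{k}(p^{\lambda}) = \lambda_{2n} + \lambda_{2n-1} + \cdots + \lambda_{2n-k+1}$, the sum of the $k$ smallest parts; hence $\lambda_{2n} = \delta_{1}$ and $\lambda_{2n-i} = \delta_{i+1} - \delta_{i}$ recover $\lambda$ from the invariants $\delta_{k}$. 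Therefore $K p^{\lambda} K = K p^{\mu} K$ forces $\lambda = \mu$, and the union is disjoint.

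The Smith-normal-form reduction is the bulk of the argument, but it is entirely standard; the only step genuinely specific to this setting, and the one I would be most careful about, is the preliminary claim that $p$ is a uniformizer of $\mathcal{O}$ in all four cases---in particular in Case 2, where $G = Gl_{2n}(E)$ and one must invoke that $E/F$ is unramified to guarantee the elementary divisors are honest powers of $p$ rather than of a proper uniformizer.
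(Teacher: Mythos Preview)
Your argument via Smith normal form over the discrete valuation ring $\mathcal{O}$ is correct and is the standard proof of this fact; the existence step (row/column reduction to $p^{\lambda}$) and the disjointness step (invariance of the minimal minor valuations $\delta_{k}$ under $K$-bimultiplication, recovering $\lambda$ from the successive differences) are both sound, and your care about the unramified hypothesis in Case~2 ensuring $p$ remains a uniformizer is exactly the right point to flag.

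By way of comparison: the paper does not give its own proof of this statement at all. It is quoted as a classical result with a reference to Macdonald's book, and is used as background input rather than something established within the paper. So there is no ``paper's proof'' to compare against; what you have written is essentially the argument one finds in the cited reference.
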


Let $c_{\lambda}$, with $\lambda \in \Lambda_{2n}$, be the characteristic function for the double coset $Kp^{\lambda}K$ inside $G$.  These functions form a basis for $\mathcal{H}(G,K)$.  

Let the constant $V_{\lambda}$ (for any $l(\lambda) \leq n$) be given via the following integral evaluations (see Equations \ref{den1}, \ref{den2}, \ref{den3}, \ref{den4} for notation):

In \textbf{Case 1:}
\begin{equation*}
\frac{1}{V_{\lambda}} = \int_{T} K_{\lambda}^{BC_{n}}(x;p^{-1}; \pm p^{-1/2},0,0)^{2} \Delta_{1} dT
\end{equation*}

In \textbf{Case 2:}
\begin{equation*}
\frac{1}{V_{\lambda}} = \int_{T} K_{\lambda}^{BC_{n}}(x;p^{-2};1,p^{-1},0,0)^{2} \Delta_{2} dT
\end{equation*}

In \textbf{Case 3:}
\begin{equation*}
\frac{1}{V_{\lambda}} = \int_{T} P_{\lambda}^{(n)}(x;p^{-2}) P_{\lambda}^{(n)}(x^{-1};p^{-2}) \Delta_{3} dT
\end{equation*}

In \textbf{Case 4:}
\begin{equation*}
\frac{1}{V_{\lambda}} = \int_{T} K_{\lambda}^{BC_{n}}(x;p^{-1}; p^{-1/2}, p^{-1/2},0,0)^{2} \Delta_{4} dT
\end{equation*}

In particular, $V_{0}$ is the reciprocal of the integral of the density function in each of the cases.  
Note that for Cases 1, 2, and 4, the $V_{\lambda}$ are determined explicitly in \cite{VV2} for general parameters $t_{0}, \dots, t_{3}$ of the Koornwinder $q=0$ polynomials and in \cite{MacP} for these choices of parameters.  In the third case, the norm is computed in \cite{VV}, for example.

Finally, for $z = (z_{1}, \dots, z_{n}) \in \mathbb{C}^{n}$ and $f \in \mathcal{H}(G,K)$, define (for \textbf{Case 1}, \textbf{Case 2}, and \textbf{Case 4})
\begin{equation*}
\tilde f(z) = \hat{f}(z_{1}, \dots, z_{n}, -z_{1}, \dots, -z_{n})
\end{equation*}
where $\hat{}$ denotes the Satake transform on $\mathcal{H}(G,K)$.  For \textbf{Case 3}, define  
\begin{equation*}
\tilde f(z) = \hat{f}(z_{1} + 1/2, z_{1} - 1/2, \dots, z_{n}+1/2, z_{n} -1/2)
\end{equation*}

In fact by \cite[Lemma 4.2]{Offen} and \cite[Lemma 2.1]{HS}, $f \rightarrow \tilde f(z)$ is the eigenvalue map, that is
\begin{equation*}
(f \star \Omega_{z})(s) = \tilde f(z) \Omega_{z}(s),
\end{equation*}
where $\Omega_{z}(s)$, $z \in \mathbb{C}^{n}$ are the relative spherical functions, as determined in \cite{Offen} and \cite{HS}.

Also for $f \in \mathcal{H}(G,K), g \in G$ put $\check f(g) := f(g^{-1})$.

\section{Main Results}

In this section, we will prove Theorem \ref{genthm} and discuss the related corollaries.  The technique is to use the results of \cite{Offen, HS} as well as additional $p$-adic arguments.

\begin{proposition}\label{prop1}
Let $l(\lambda) \leq 2n$ and $l(\mu) \leq n$.  Then we have
\begin{multline*}
\int_{S} ( c_{\lambda} \star ch_{0})(s) ch_{\mu}(s) ds \\
=\begin{cases} \frac{ p^{\langle\mu,\rho_{1}\rangle + \langle\lambda, \rho_{2}\rangle}}{Z} \int_{T} P_{\lambda}^{(2n)}(x_{i}^{\pm 1}; p^{-1}) K_{\mu}^{BC_{n}}(x; p^{-1}; \pm p^{-1/2},0,0) \Delta_{1} dT, & \text{in \textbf{Case 1}} \\
 \frac{p^{2\langle \mu,\rho_{1} \rangle + 2 \langle \lambda, \rho_{2} \rangle}}{Z} \int_{T}P_{\lambda}^{(2n)}(x_{i}^{\pm 1};p^{-2}) K_{\mu}^{BC_{n}}(x; p^{-2}; 1, p^{-1},0,0) \Delta_{2} dT ,& \text{in \textbf{Case 2}}\\
\frac{p^{\langle \mu, \rho_{3}\rangle + \langle \lambda, \rho_{2}\rangle}}{Z} \int_{T} P_{\lambda}^{(2n)}(p^{\pm 1/2}x_{i};p^{-1})P_{\mu}^{(n)}(x^{-1};p^{-2}) \Delta_{3} dT,& \text{in \textbf{Case 3}}\\
\frac{p^{\langle \mu, \rho_{1} \rangle + \langle \lambda, \rho_{2} \rangle}}{Z}\int_{T} P_{\lambda}^{(2n)}(x_{i}^{\pm 1}; p^{-1}) K_{\mu}^{BC_{n}}(x; p^{-1};  p^{-1/2},p^{-1/2},0,0) \Delta_{4} dT, & \text{in \textbf{Case 4}},
\end{cases}
\end{multline*}
where $\rho_{1} = (n - 1/2, n-3/2, \dots, 1/2) \in \mathbb{C}^{n}$, $\rho_{2} = (n-1/2, n-3/2, \dots, 1/2 - n) \in \mathbb{C}^{2n}$, $\rho_{3} = (n-1, n-3, \dots, 1-n) \in \mathbb{C}^{n}$ and the normalization $Z$ is the evaluation of the integral at $\lambda = \mu = 0$.
\end{proposition}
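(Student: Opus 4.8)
The plan is to realize the left-hand side as an $L^2$-pairing on the symmetric space $S$ and to transport it to the torus using the relative spherical (Fourier) transform together with the Plancherel theorem for $S$ established by Offen \cite{Offen} and Hironaka--Sato \cite{HS}. For $\phi \in \mathcal{S}(K\setminus S)$ I set $\mathcal{F}\phi(z) = \int_S \phi(s)\,\Omega_z(s)\,ds$, where $\Omega_z$ denotes the relative spherical functions. Two structural inputs drive the argument. First, the eigenvalue property $(f\star\Omega_z)(s) = \tilde f(z)\,\Omega_z(s)$ recorded above turns convolution by $c_\lambda$ into multiplication by the scalar $\tilde c_\lambda(z)$, so that $\mathcal{F}(c_\lambda \star ch_0)(z) = \tilde c_\lambda(z)\,\mathcal{F}(ch_0)(z)$. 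Second, the Plancherel formula identifies the $L^2(S)$ pairing with a torus integral against an explicit density $d\nu(z)$, which I expect to be precisely $\Delta_i\,dT$ in Case $i$ --- this being exactly the content of the spherical-function and Plancherel computations of \cite{Offen, HS}. Combining these, the left-hand side becomes
\[
\int_S (c_\lambda\star ch_0)(s)\,ch_\mu(s)\,ds = \int_T \tilde c_\lambda(z)\,\mathcal{F}(ch_0)(z)\,\mathcal{F}(ch_\mu)(z)\;d\nu(z),
\]
where in Cases 2 and 3 the inversion $z\mapsto -z$ coming from the pairing is either absorbed by the $x\mapsto x^{-1}$ symmetry of the Koornwinder polynomials or reflected in the appearance of $P_\mu^{(n)}(x^{-1};p^{-2})$.

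It then remains to identify the three factors in the torus integrand. By the Satake isomorphism $\hat c_\lambda$ equals, up to the normalizing power $p^{\langle\lambda,\rho_2\rangle}$, the Hall--Littlewood polynomial $P_\lambda^{(2n)}$; feeding this through the eigenvalue map $\tilde f(z) = \hat f(z_1,\dots,z_n,-z_1,\dots,-z_n)$ (respectively the shifted map in Case 3) yields exactly the specialized polynomial on the right, e.g. $P_\lambda^{(2n)}(x_i^{\pm1};p^{-1})$ in Case 1 and $P_\lambda^{(2n)}(p^{\pm1/2}x_i;p^{-1})$ in Case 3. Next, since $ch_\mu$ is the indicator of the single $K$-orbit $K\cdot d_\mu$ and $\Omega_z$ is $K$-invariant, $\mathcal{F}(ch_\mu)(z) = \mathrm{vol}(K\cdot d_\mu)\,\Omega_z(d_\mu)$, and the value $\Omega_z(d_\mu)$ is, up to an explicit power of $p$ (the source of the $p^{\langle\mu,\rho_1\rangle}$, resp. $p^{\langle\mu,\rho_3\rangle}$, factor), the Koornwinder polynomial $K_\mu^{BC_n}$ with the case-dependent parameters --- degenerating in Case 3 to the type-$A$ Hall--Littlewood polynomial $P_\mu^{(n)}(x;p^{-2})$. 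Finally $\mathcal{F}(ch_0)(z) = \mathrm{vol}(K\cdot d_0)\,\Omega_z(d_0)$ is a $z$-independent constant, because $d_0$ lies in the base orbit and $\Omega_z(d_0) = K_0^{BC_n} = 1$. Assembling these pieces produces the claimed identity once the leftover orbit-volume and measure constants are collected into $Z$, which is fixed by the requirement that the whole expression equal its value at $\lambda=\mu=0$. I would run Cases 1, 2, 4 uniformly through the $BC_n$ Koornwinder picture with the eigenvalue map $z\mapsto(z,-z)$, and treat Case 3 separately, since there both the eigenvalue map is shifted and the relative spherical functions are of type $A$ rather than $BC$.

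The main obstacle I anticipate is the bookkeeping of normalizations rather than any conceptual difficulty. One must reconcile three independent conventions --- the power of $p$ built into the Satake transform of $c_\lambda$, the power of $p$ relating $\Omega_z(d_\mu)$ to the standard Koornwinder polynomial $K_\mu^{BC_n}$, and the Haar measure normalizations underlying the Plancherel formula --- and check that their product is exactly $p^{\langle\mu,\rho_1\rangle+\langle\lambda,\rho_2\rangle}$ (with $\rho_1$ replaced by $\rho_3$ and the appropriate power adjustments per case), while the remaining $z$-independent constants collapse into the single factor $Z$. A secondary technical point is verifying that the Plancherel density of \cite{Offen, HS}, as normalized there, coincides on the nose with $\Delta_i$; this requires matching the boundary and interaction factors of that density against the specializations $(\pm p^{-1/2},0,0)$ in Case 1, $(1,p^{-1},0,0)$ in Case 2, and $(p^{-1/2},p^{-1/2},0,0)$ in Case 4 of the Koornwinder parameters, together with the type-$A$ density $\tilde\Delta_S^{(n)}(x;p^{-2})$ in Case 3, which must be confirmed case by case.
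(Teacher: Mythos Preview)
Your proposal is correct and follows essentially the same approach as the paper: apply the Plancherel formula for $\mathcal{S}(K\setminus S)$, use the eigenvalue property to compute $\widehat{c_\lambda\star ch_0}(z)=\tilde c_\lambda(z)\widehat{ch_0}(z)$, identify $\tilde c_\lambda$ via the Satake isomorphism and $\widehat{ch_\mu}$ via the orbit-volume-times-$\Omega_z(d_\mu)$ formula from \cite{Offen,HS}, and match the Plancherel density with $\Delta_i$ case by case. The only minor refinement the paper makes explicit is that $\widehat{ch_0}(z)=1$ exactly (rather than merely a constant absorbed into $Z$), since the orbit volume $p^{2\langle 0,\rho_1\rangle}V_0/V_0$ and the spherical-function value $\Omega_z(d_0)=\Omega_z(s_0)=1$ both normalize to $1$.
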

\begin{proof}
\hfill

\textbf{Case 1.} We use the spherical Fourier transform on $\mathcal{S}(K \setminus S)$:
\begin{equation*}
\int_{S} f_{1}(s) \overline{f_{2}(s)} ds = \int_{T} \hat{f_{1}}(z) \overline{\hat{f_{2}}(z)} d_{\mu}(z);
\end{equation*}
here $d_{\mu}(z)$ is the Plancherel measure on $\mathcal{S}(K \setminus S)$.
We apply this to 
\begin{equation*}
\int_{S} (c_{\lambda} \star ch_{0})(s) ch_{\mu}(s) ds.
\end{equation*}

Note that the spherical Fourier transform satisfies (by Lemma 4.4 \cite{Offen})
\begin{equation*}
(c_{\lambda} \star ch_{0}) \hat (z) =  \tilde c_{\lambda}(z) \hat{ch}_{0}(z) = \tilde c_{\lambda}(z),
\end{equation*}
since $\hat{ch}_{0}(s) = 1$.  
Here
\begin{equation*}
\tilde c_{\lambda}(z) = \hat{c_{\lambda}}(z_{1}, \dots, z_{n}, -z_{1}, \dots, -z_{n})
\end{equation*}
where $\hat{c_{\lambda}}$ denotes here the usual Satake transform on $\mathcal{H}(Gl_{2n}(\mathbb{Q}_{p}), Gl_{2n}(\mathbb{Z}_{p}))$.  But \cite[Ch. V]{Mac} , this is equal to 
\begin{equation*}
p^{\langle\lambda, \rho_{2}\rangle} P_{\lambda}^{(2n)}(p^{-z_{1}}, \dots, p^{-z_{n}}, p^{z_{1}}, \dots, p^{z_{n}}; p^{-1}).
\end{equation*}

Also, using \cite{Offen} Theorem 1.2 and Proposition 5.15, we have
\begin{multline*}
\hat{ch_{\mu}}(z) = \Big\{ \int_{K \cdot d_{\mu}} ds \Big\} \Omega_{z}(d_{\mu}) = \Big \{ p^{2\langle\mu, \rho_{1}\rangle} \frac{V_{0}}{V_{\mu}} \Big \} p^{-\langle\mu, \rho_{1}\rangle} \frac{V_{\mu}}{V_{0}} K_{\mu}^{BC_{n}}(p^{z_{i}}; p^{-1}; \pm p^{-1/2},0,0) \\
= p^{\langle\mu, \rho_{1}\rangle} K_{\mu}^{BC_{n}}(p^{z_{i}}; p^{-1}; \pm p^{-1/2},0,0).
\end{multline*}

Finally, by \cite{Offen} Theorem 1.3 the Plancherel density is 
\begin{equation*}
\frac{\tilde \Delta_{K}^{(n)}(p^{z_{i}};p^{-1}; \pm p^{-1/2},0,0 )}{\int_{T} \tilde \Delta_{K}^{(n)}(p^{z_{i}};p^{-1}; \pm p^{-1/2},0,0)dT}.
\end{equation*}
Combining these, using Equation \ref{den1}, and putting $x_{i} = p^{z_{i}}$ gives the result.

\textbf{Case 2.} The argument is the same as Case 1, but the Plancherel measure and zonal spherical functions are different.  We indicate the differences (see the above references of \cite{Offen} but for Case 2, and \cite[Ch. V]{Mac} for the group $Gl_{2n}(E)$):
\begin{equation*}
\tilde c_{\lambda}(z) = \hat{c_{\lambda}}(z_{1}, \dots, z_{n}, -z_{1}, \dots, -z_{n}) = p^{2\langle \lambda, \rho_{2}  \rangle} P_{\lambda}^{(2n)} (p^{-2z_{1}}, \dots, p^{-2z_{n}}, p^{2z_{1}}, \dots, p^{2z_{n}}; p^{-2}).
\end{equation*}
We also have
\begin{multline*}
\hat{ch_{\mu}}(z) =  \Big\{ \int_{K \cdot d_{\mu}} ds \Big\} \Omega_{z}(d_{\mu}) = \Big \{ p^{4 \langle \mu, \rho_{1} \rangle} \frac{V_{0}}{V_{\mu}} \Big \} p^{-2\langle \mu, \rho_{1} \rangle} \frac{V_{\mu}}{V_{0}} K_{\mu}^{BC_{n}}(p^{2z_{i}}; p^{-2}; 1,p^{-1},0,0)  \\
= p^{2 \langle \mu, \rho_{1} \rangle}K_{\mu}^{BC_{n}}(p^{2z_{i}}; p^{-2}; 1,p^{-1},0,0).
\end{multline*}
Finally, the Plancherel density is
\begin{equation*}
\frac{\tilde \Delta_{K}^{(n)}(p^{2z_{i}}; p^{-2}; 1, p^{-1},0,0)}{\int_{T}\tilde \Delta_{K}^{(n)}(p^{2z_{i}}; p^{-2}; 1, p^{-1},0,0) dT}.
\end{equation*}
Combining these, and putting $x_{i} = p^{2z_{i}}$ gives the result.

\textbf{Case 3.} The argument is the same as in the above cases, but the Plancherel measure and zonal spherical functions are different.  We indicate the differences (see \cite{HS}):
\begin{multline*}
\tilde c_{\lambda}(z) = \hat{c_{\lambda}}(z_{1} + 1/2, z_{1} - 1/2, \dots, z_{n} + 1/2, z_{n} - 1/2) \\
= p^{\langle \lambda, \rho_{2} \rangle} P_{\lambda}^{(2n)}(p^{-z_{1} - 1/2}, p^{-z_{1} + 1/2}, \dots, p^{-z_{n} - 1/2}, p^{-z_{n} + 1/2}; p^{-1}).
\end{multline*}
We also have
\begin{multline*}
\hat{ch_{\mu}}(z) =  \Big\{ \int_{K \cdot d_{\mu}} ds \Big\} \Omega_{z}(d_{\mu}) = \Big \{ p^{2\langle \mu, \rho_{3} \rangle} \frac{V_{0}}{V_{\mu}} \Big\} p^{-<\mu, \rho_{3}>} \frac{V_{\mu}}{V_{0}} P_{\mu}(p^{z_{1}}, \dots, p^{z_{n}}; p^{-2}) \\
= p^{\langle \mu, \rho_{3} \rangle} P_{\mu}(p^{z_{1}}, \dots, p^{z_{n}}; p^{-2}). 
\end{multline*}
Finally, the Plancherel density is
\begin{equation*}
\frac{\tilde \Delta_{S}^{(n)}(p^{z_{i}};p^{-2})}{\int_{T}\tilde \Delta_{S}^{(n)}(p^{z_{i}};p^{-2}) dT }.
\end{equation*}
Combining these, and putting $x_{i} = p^{z_{i}}$ gives the result.

\textbf{Case 4.} The argument is the same as Case 1, but the Plancherel measure and zonal spherical functions are different.  We indicate the differences (see the above references of \cite{Offen} but for Case 1, as well as \cite{Mac}):
\begin{equation*}
\tilde c_{\lambda}(z) = p^{\langle\lambda, \rho_{2}\rangle} P_{\lambda}^{(2n)}(p^{-z_{1}}, \dots, p^{-z_{n}}, p^{z_{1}}, \dots, p^{z_{n}}; p^{-1}).
\end{equation*}
We also have
\begin{multline*}
\hat{ch_{\mu}}(z) =  \Big\{ \int_{K \cdot d_{\mu}} ds \Big\} \Omega_{z}(d_{\mu}) = \Big \{ p^{2\langle\mu, \rho_{1}\rangle} \frac{V_{0}}{V_{\mu}} \Big \} p^{-\langle \mu, \rho_{1} \rangle} \frac{V_{\mu}}{V_{0}} K_{\mu}^{BC_{n}}(p^{z_{i}}; p^{-1}; p^{-1/2},p^{-1/2},0,0)  \\
= p^{ \langle \mu, \rho_{1} \rangle}K_{\mu}^{BC_{n}}(p^{z_{i}}; p^{-1}; p^{-1/2},p^{-1/2},0,0).
\end{multline*}
Finally, the Plancherel density is
\begin{equation*}
\frac{\tilde \Delta_{K}^{(n)}(p^{z_{i}}; p^{-1}; p^{-1/2}, p^{-1/2},0,0)}{\int_{T}\tilde \Delta_{K}^{(n)}(p^{z_{i}}; p^{-1}; p^{-1/2}, p^{-1/2},0,0) dT}.
\end{equation*}
Combining these, and putting $x_{i} = p^{z_{i}}$ gives the result.

\end{proof}

\begin{definition}
Let $g_{\mu} = \text{diag}(1, \dots, 1, p^{-\mu_{n}}, \dots, p^{-\mu_{1}}) \in Gl_{2n}(\mathbb{Q}_{p})$ for $\mu = (\mu_{1}, \dots, \mu_{n})$.  
\end{definition}

\begin{proposition}\label{prop2}
We have 
\begin{equation*}
\int_{S} (\check c_{\lambda} \star ch_{0})(s) ch_{\mu}(s) ds = \begin{cases} p^{2\langle\mu, \rho_{1}\rangle} \frac{V_{0}}{V_{\mu}}\int_{H} c_{\lambda}(g_{\mu}h) dh, & \text{in \textbf{Case 1}} \\
p^{4\langle \mu, \rho_{1}  \rangle} \frac{V_{0}}{V_{\mu}}\int_{H} c_{\lambda}(g_{\mu}k_{0}h) dh, & \text{in \textbf{Case 2}} \\
p^{2\langle \mu, \rho_{3}\rangle}\frac{V_{0}}{V_{\mu}}\int_{H} c_{\lambda}(g_{-\mu}h) dh, & \text{in \textbf{Case 3}} \\
p^{2\langle\mu, \rho_{1}\rangle} \frac{V_{0}}{V_{\mu}}\int_{H} c_{\lambda}(g_{\mu}k_{0}h) dh, & \text{in \textbf{Case 4}},
\end{cases}
\end{equation*}
where $k_{0} \in K$ is a specific element in Cases 2,4.  In particular, when $\mu = 0$, the RHS is $\int_{H} c_{\lambda}(g) dg$.
\end{proposition}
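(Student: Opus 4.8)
The plan is to reduce the left-hand side to a single pointwise evaluation by using the orbit structure of $S$. Since $ch_\mu$ is the characteristic function of the single orbit $K\cdot d_\mu$, and $\check c_\lambda \star ch_0$ lies in $C^\infty(K\setminus S)$ and is therefore constant on $K\cdot d_\mu$, one has
\begin{equation*}
\int_S (\check c_\lambda \star ch_0)(s)\, ch_\mu(s)\, ds = (\check c_\lambda \star ch_0)(d_\mu)\int_{K\cdot d_\mu} ds .
\end{equation*}
The orbit volume $\int_{K\cdot d_\mu}ds$ is exactly the factor already extracted in the proof of Proposition \ref{prop1} from the work of \cite{Offen, HS}: it equals $p^{2\langle\mu,\rho_1\rangle}V_0/V_\mu$ in Case 1, $p^{4\langle\mu,\rho_1\rangle}V_0/V_\mu$ in Case 2, $p^{2\langle\mu,\rho_3\rangle}V_0/V_\mu$ in Case 3, and $p^{2\langle\mu,\rho_1\rangle}V_0/V_\mu$ in Case 4. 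These are precisely the prefactors appearing in the statement, so the entire problem reduces to identifying $(\check c_\lambda \star ch_0)(d_\mu)$ with the corresponding integral over $H$.

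To compute the pointwise value I would unfold the convolution and the definition of $\check c_\lambda$, obtaining
\begin{equation*}
(\check c_\lambda \star ch_0)(d_\mu) = \int_G c_\lambda(g^{-1})\, ch_0(g^{-1}\cdot d_\mu)\, dg .
\end{equation*}
The key geometric input is the identity $d_\mu = g_\mu \cdot s_0 = \theta(g_\mu)$, valid in Cases 1 and 3 since there $d_0 = s_0$ (with $g_{-\mu}$ replacing $g_\mu$ in Case 3 to match the sign pattern of $d_\mu$), which I would verify by the direct matrix computation $g_\mu s_0 g_\mu^* = d_\mu$ using the explicit forms of $s_0$, $g_\mu$, and the case-dependent involution $*$. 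In Cases 2 and 4, where $d_0\ne s_0$, one instead writes $d_0 = \theta(k_0)$ for a suitable $k_0\in K$ and $d_\mu = \theta(g_\mu k_0)$, which is the source of the extra factor $k_0$ in those cases. Granting this, the action identity $g^{-1}\cdot d_\mu = (g^{-1}g_\mu)\cdot s_0$ (respectively $(g^{-1}g_\mu k_0)\cdot s_0$), combined with the bijection $\theta\colon G/H \xrightarrow{\sim} S$ and $\mathrm{Stab}(s_0)=H$, shows that $ch_0(g^{-1}\cdot d_\mu)$ equals $1$ precisely when $g^{-1}g_\mu\in KH$ (respectively $g^{-1}g_\mu k_0\in KH$) and $0$ otherwise.

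Substituting this indicator collapses the $G$-integral onto a single coset of $H$. After replacing $g$ by $g_\mu$ (times $k_0$) followed by an element of $HK$ and using the $K$-bi-invariance of $c_\lambda$, the integral reduces to $\int_H c_\lambda(g_\mu h)\,dh$ (and its $k_0$- and $g_{-\mu}$-variants), provided the Haar measures are normalized so that $\int_K dg=1$ and $\mathrm{vol}(K\cap H)=1$; here unimodularity of $G$ and $H$ is used to absorb the inversions introduced by $\check c_\lambda$ and to present the answer with $c_\lambda$ rather than $\check c_\lambda$. Specializing $\mu=0$ then gives prefactor $1$ and $g_0=I$, and since $k_0\in K$ the left-$K$-invariance of $c_\lambda$ yields $c_\lambda(k_0 h)=c_\lambda(h)$, so the right-hand side becomes $\int_H c_\lambda(g)\,dg$, as claimed.

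I expect the main obstacle to be twofold. First is the case-by-case verification that $g_\mu$ (corrected by $k_0$ in Cases 2 and 4, and replaced by $g_{-\mu}$ in Case 3) maps $s_0$ to $d_\mu$ under $\theta$; this is where the explicit structure of each of the four symmetric spaces genuinely enters, and where the element $k_0$ must be produced. Second is the careful bookkeeping of Haar-measure normalizations and of the inversions in the passage from the $G$-integral over $KHg_\mu^{-1}$ to the integral over $H$, since it is there that the distinction between $c_\lambda$ and $\check c_\lambda$ must be tracked in order to land on the precise form stated.
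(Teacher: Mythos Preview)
Your proposal is correct and follows essentially the same route as the paper's proof: reduce to a pointwise evaluation on $K\cdot d_\mu$ times the orbit volume, unfold the convolution, identify the support of $ch_0(g^{-1}\cdot d_\mu)$ via $g_\mu\cdot d_0=d_\mu$ (with the $k_0$ correction in Cases~2,4 and $g_{-\mu}$ in Case~3), and then pass from a $G$-integral over $KHg_\mu^{-1}$ to an $H$-integral. The only place the paper is more concrete than your sketch is that last passage, where it writes $KH=\bigcup_i Kx_i$ (disjoint, $x_i\in H$) and observes that then also $H=\bigcup_i K'x_i$, which is precisely the coset/measure bookkeeping you correctly flagged as the main obstacle.
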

\begin{proof}
\hfill

\textbf{Case 1:}
We first note that $d_{0} = s_{0}$ in this case.  We have
\begin{equation*}
\int_{S} ( c_{\lambda} \star ch_{0})(s) ch_{\mu}(s) ds = \int_{K \cdot d_{\mu}} ({c_{\lambda}} \star ch_{0})(s) = meas.(K \cdot d_{\mu}) ({c_{\lambda}} \star ch_{0})(d_{\mu}),
\end{equation*}
where the first equality follows since $ch_{\mu}(s)$ vanishes off of $K \cdot d_{\mu}$, and the second follows since $({c_{\lambda}} \star ch_{0})$ is $K$-invariant.  Now by definition of the convolution action, we have
\begin{equation*}
({c_{\lambda}} \star ch_{0})(d_{\mu}) = \int_{G}  c_{\lambda}(g^{-1}) ch_{0}(g \cdot d_{\mu})dg.
\end{equation*}
Letting $H_{\mu} = \{g \in G | g \cdot d_{0} = d_{\mu}\}$, we have 
\begin{equation*}
g \cdot d_{\mu} \in K \cdot d_{0} \Leftrightarrow (kg) \cdot d_{\mu} = d_{0} \text{ for some $k \in K$} \Leftrightarrow g \in K H_{\mu}^{-1}.
\end{equation*}
Now one can check that $g_{\mu} \cdot d_{0} = d_{\mu}$, so that $H_{\mu} = g_{\mu} H$ (clearly $g_{\mu}H \subset H_{\mu}$, for the other direction let $g \in H_{\mu}$ then $g \cdot d_{0} = d_{\mu} = g_{\mu} \cdot d_{0}$, so $g_{\mu}^{-1}g \in H$) and so $KH_{\mu}^{-1} = KHg_{\mu}^{-1}$.  Thus, the above integral can be rewritten as
\begin{equation*}
\int_{KHg_{\mu}^{-1}} c_{\lambda}(g^{-1})dg = \int_{KH} c_{-\lambda}(gg_{\mu}^{-1}) dg.
\end{equation*}
Finally, write 
\begin{equation*}
KH = \cup K x_{i},
\end{equation*}
a disjoint union and $x_{i} \in H$.  Then we claim $H = \cup K' x_{i}$, again a disjoint union.  That the union is contained inside $H$ is clear, suppose next that $h \in H$.  But then $h = kx_{i}$ for some $k \in K$ and $x_{i}$.  But since $h, x_{i} \in H$ we have $k \in H$, i.e., $k \in K'$.  Clearly the union is disjoint, since $K'x_{i} \subset Kx_{i}$ for all $i$.  Thus,
\begin{multline*}
 \int_{KH} c_{-\lambda}(gg_{\mu}^{-1}) dg = \sum_{x_{i}} \int_{Kx_{i}} c_{-\lambda}(gg_{\mu}^{-1})dg = \sum_{x_{i}} c_{-\lambda}(x_{i}g_{\mu}^{-1})dg \\ = \sum_{x_{i}} \int_{K'x_{i}} c_{-\lambda}(x_{i}g_{\mu}^{-1})dg = \int_{H} c_{-\lambda}(hg_{\mu}^{-1})dh.
\end{multline*}
Finally, we have to multiply this by $meas.(K \cdot d_{\mu})$, see the previous proof for these values in each Case.

\textbf{Case 3:} Analogous to Case 1, except that $g_{-\mu} \cdot d_{0} = d_{\mu}$, so one uses $g_{-\mu}$ instead of $g_{\mu}$.

\textbf{Case 2 and Case 4:} We have $s_{0} = I_{2n} \neq d_{0}$.  In both cases, $s_{0} \in K \cdot d_{0}$, so $s_{0} = k_{0} \cdot d_{0}$ for some $k_{0} \in K$.  Then one can check that $\text{Stab}(d_{0}) = k_{0}^{-1}Hk_{0}$, so that $H_{\mu} = g_{\mu} k_{0}^{-1} H k_{0}$ in the proof above.  Using this, we have $KH_{\mu}^{-1} = Kk_{0}^{-1}Hk_{0}g_{\mu}^{-1} = KHk_{0}g_{\mu}^{-1}$, so repeating the arguments of Case 1, we have
\begin{equation*}
\int_{KHk_{0}g_{\mu}^{-1}} c_{\lambda}(g^{-1}) dg = \int_{KH} c_{-\lambda}(gk_{0}g_{\mu}^{-1})dg.
\end{equation*}
Exactly as argued in Case 1, we have
\begin{equation*}
\int_{KH} c_{-\lambda}(gk_{0}g_{\mu}^{-1})dg = \int_{H} c_{-\lambda}(hk_{0}g_{\mu}^{-1}) dh.
\end{equation*}
Finally, we have to multiply this by $meas.(K \cdot d_{\mu})$, see the previous proof for these values in each Case.

The last part follows since
\begin{equation*}
g_{0} = \text{diag}(1, \dots, 1) = \text{Id}_{2n},
\end{equation*}
and $c_{\lambda}(k_{0}h) = c_{\lambda}(h)$ as the characteristic functions are $K$-bi-invariant.

\end{proof}

\begin{proof}[Proof of Theorem \ref{genthm}]
Putting the previous two propositions together proves the theorem in the introduction.
\end{proof}

We will now focus on the special case $\mu = 0$, where we can obtain some more explicit results.  First, we will use measure-theoretic arguments to compute the RHS of the integral identity in Theorem \ref{genthm}.

\begin{lemma}\label{lemma}
We have the following:

\textbf{Case 1:} 
 \begin{equation*}
\int_{H} c_{\lambda}(h) dh =  \begin{cases} 0 & \text{if $\lambda \neq \mu^{2}$ for any $\mu$,}
\\
p^{2\langle\mu, \rho_{3}\rangle} \frac{v_{n}(p^{-2})}{v_{\mu}(p^{-2})} &\text{if $\lambda = \mu^{2}$ for some $\mu$.}
\end{cases}
\end{equation*}

\textbf{Case 2:}
\begin{equation*}
\int_{H} c_{\lambda}(h) dh = p^{2\langle \lambda, \rho_{2} \rangle} \frac{v_{2n}(p^{-1})}{v_{\lambda}(p^{-1})}.
\end{equation*}

\textbf{Case 3:}
\begin{equation*}
\int_{H} c_{\lambda}(h) dh = \begin{cases} 0 & \text{if $\lambda \neq \mu \bar{\mu}$ for any $\mu$,} \\
p^{2<\mu, \rho_{1}>} \frac{\phi_{n}(p^{-2})}{\phi_{n-l(\mu)}(p^{-2})(1-p^{-1})^{l(\mu)}v_{\mu^{+}}(p^{-1})} &\text{if $\lambda = \mu \bar{\mu}$ for some $\mu$.}
\end{cases}
\end{equation*}

\textbf{Case 4:}
\begin{equation*}
\int_{H} c_{\lambda}(h) dh = \sum_{\mu \cup \nu = \lambda} p^{\langle \mu, \rho_{3} \rangle + \langle \nu, \rho_{3} \rangle} \frac{v_{n}(p^{-1})^{2}}{v_{\mu}(p^{-1})v_{\nu}(p^{-1})}
\end{equation*}

\end{lemma}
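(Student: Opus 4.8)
The plan is to read each integral $\int_H c_\lambda(h)\,dh$ as the $H$-Haar volume (normalized so that $\mathrm{meas}_H(K') = 1$) of the set $H \cap Kp^\lambda K$, and to evaluate that volume through the Cartan decomposition of $H$ together with standard $p$-adic volume formulas. The structural observation that drives everything is that, under the identifications of Section 2, $K' = K \cap H$, so $K' \subseteq K$; hence the restriction $c_\lambda|_H$ is $K'$-bi-invariant, being the indicator of $H \cap Kp^\lambda K$. Since membership in $Kp^\lambda K$ is detected by the elementary divisors over $\mathcal{O}_G$ (the Smith normal form $\mathrm{SNF}_G$), which are invariant under left and right multiplication by $K \supseteq K'$, every Cartan cell $K'\sigma K'$ of $H$ lies either entirely inside or entirely outside $Kp^\lambda K$. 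Writing $H = \bigsqcup_\sigma K'\sigma K'$ over the appropriate representatives, I therefore get
\begin{equation*}
\int_H c_\lambda(h)\,dh = \sum_{\sigma\,:\,\mathrm{SNF}_G(\sigma) = p^\lambda} \mathrm{meas}_H(K'\sigma K'),
\end{equation*}
and the whole computation splits into (i) identifying which representatives $\sigma$ have image of Smith normal form $p^\lambda$ in $G$, and (ii) computing the $H$-volume of each such cell.

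For step (i) I would compute the elementary divisors over $\mathcal{O}_G$ of the diagonal-type Cartan representatives of $H$. In \textbf{Case 2} and \textbf{Case 4} this is immediate: since $E/F$ is unramified, $p$ remains a uniformizer of $\mathcal{O}_E$, so $p^\nu \in Gl_{2n}(F)$ keeps Smith normal form $p^\nu$ over $\mathcal{O}_E$ (forcing $\nu = \lambda$, hence no vanishing), while in Case 4 the block-diagonal representative $(p^\mu,p^\nu)$ has elementary divisors $p^{\mu\cup\nu}$ (so the contributing cells are exactly those with $\mu\cup\nu = \lambda$, producing the stated sum). In \textbf{Case 1} the representative is multiplication by $\mathrm{diag}(p^{\mu_1},\dots,p^{\mu_n})$ on $E^n \cong F^{2n}$; because $E/F$ is unramified of degree $2$, each $p^{\mu_i}\in E^\times$ acts on $E\cong F^2$ with elementary divisors $(p^{\mu_i},p^{\mu_i})$, so the Smith normal form in $Gl_{2n}(F)$ is $p^{\mu^2}$, which accounts for the vanishing unless $\lambda=\mu^2$. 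In \textbf{Case 3} the symplectic Cartan representative $\mathrm{diag}(p^{\mu_1},\dots,p^{\mu_n},p^{-\mu_n},\dots,p^{-\mu_1})$ has elementary divisors $p^{\mu\bar\mu}$, giving vanishing unless $\lambda=\mu\bar\mu$. Thus in Cases 1, 2, 3 a single cell contributes, whereas Case 4 contributes a genuine sum over ordered pairs.

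For step (ii) I would invoke the standard Macdonald volume formula for the general linear group: for $Gl_m$ over a field with residue field of size $q$, normalized so the maximal compact has volume $1$, the cell $K''p^\kappa K''$ has volume $q^{\langle\kappa,\,2\rho_{Gl_m}\rangle}\,v_m(q^{-1})/v_\kappa(q^{-1})$, where $2\rho_{Gl_m}=(m-1,m-3,\dots,1-m)$ and $v_\kappa$ is as in Section 2 (so that $v_\kappa = v_m$ at the zero partition, which is what makes the $\lambda=0$ normalization come out to $1$). Applying this with $q=p^2,\ m=n$ gives Case 1; with $q=p,\ m=2n$ gives Case 2; and with $q=p,\ m=n$ on each factor of $Gl_n(F)\times Gl_n(F)$, multiplying the two cell volumes and summing over all $(\mu,\nu)$ with $\mu\cup\nu=\lambda$, gives Case 4 (using $\rho_3 = 2\rho_{Gl_n}$ and $\rho_2=\rho_{Gl_{2n}}$ to match the stated exponents). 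For Case 3 one instead needs the Haar volume of a single symplectic Cartan cell of $Sp_{2n}(F)$: the symplectic analog of Macdonald's formula, which is precisely the quantity computed in \cite{VV} and yields the expression in terms of $\phi_n(p^{-2})$, $\phi_{n-l(\mu)}(p^{-2})$, $(1-p^{-1})^{l(\mu)}$, and $v_{\mu^+}(p^{-1})$.

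I expect the main obstacle to be \textbf{Case 3}. Unlike the three $Gl$-type cases, which all collapse to the clean Macdonald index formula, the symplectic cell volume does not factor as simply, and the combinatorial bookkeeping must be done carefully: the zero parts of $\mu$ behave differently from the nonzero parts, which is exactly what splits the denominator into the separate factors $\phi_{n-l(\mu)}(p^{-2})$ and $(1-p^{-1})^{l(\mu)}$, and one must track the interplay between $p^{-1}$ and $p^{-2}$ and the normalization throughout. A secondary point needing care is justifying the elementary-divisor computation over the unramified extension in Case 1 and confirming $K' = K\cap H$ in each case, since that identification is what licenses the reduction to a sum over Cartan cells in the first place.
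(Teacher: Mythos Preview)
Your proposal is correct and follows essentially the same route as the paper: interpret the integral as the $H$-measure of $H\cap Kp^\lambda K$, use $K'=K\cap H$ together with the Cartan decomposition of $H$ to determine which $H$-cells land inside $Kp^\lambda K$ (via the elementary-divisor computation of the Cartan representatives embedded in $G$), and then evaluate each contributing cell's volume using Macdonald's index formula for the relevant group. The only minor discrepancy is bibliographic: for the symplectic cell volume in Case~3 the paper invokes \cite{MacP} (Macdonald's general reductive $p$-adic formula) rather than \cite{VV}.
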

\begin{proof}
\hfill

\textbf{Case 1:} 
Note first that the integral of the LHS is the measure of the intersection $H \cap Kp^{\lambda}K$.  We recall the Cartan decomposition of $G = Gl_{2n}(\mathbb{Q}_{p})$: 
\begin{equation*}
Gl_{2n}(\mathbb{Q}_{p})= \cup K p^{\lambda} K, \text{           (disjoint union)}
\end{equation*}
where $p^{\lambda}$ is the element $\text{diag}.(p^{\lambda_{1}}, \dots, p^{\lambda_{2n}})$ in $G$.  Similarly, we have the Cartan decomposition for $Gl_{n}(\mathbb{Q}_{p}(\sqrt{a}))$:
\begin{equation*}
Gl_{n}(\mathbb{Q}_{p}(\sqrt{a})) = \cup K' p^{\mu} K', \text{     (disjoint union)}
\end{equation*}
where $p^{\mu}$ is the element $\text{diag}.(p^{\mu_{1}}, \dots, p^{\mu_{n}})$ in $Gl_{n}(\mathbb{Q}_{p}(\sqrt{a}))$ and $K' = Gl_{n}(\mathbb{Z}_{p}(\sqrt{a}))$.  Note that under the isomorphism $Gl_{n}(\mathbb{Q}_{p}(\sqrt{a})) \rightarrow H$, $K'$ is mapped to  $K \cap H$, which is contained in $K$.  Also the element $\text{diag}.(p^{\mu_{1}}, \dots, p^{\mu_{n}}) \in Gl_{n}(\mathbb{Q}_{p}(\sqrt{a})) $ is mapped to the diagonal matrix $\text{diag}.(p^{\mu_{1}}, \dots, p^{\mu_{n}}, p^{\mu_{n}}, \dots, p^{\mu_{1}})$, which is an element of $Kp^{\lambda}K$, where $\lambda = \mu_{1} \mu_{1} \mu_{2} \mu_{2} \dots \mu_{n} \mu_{n}$.  Thus, $H$ may be realized inside $G$ as the disjoint union of the double cosets $\{(K \cap H) p^{(\mu_{1}, \dots, \mu_{n}, \mu_{n}, \dots, \mu_{1})} (K \cap H)\}$, where $\mu$ is a partition of length at most $n$.

This implies $H \cap K p^{\lambda} K$ is empty unless $\lambda = \mu^{2}$ for some partition $\mu$, which gives the vanishing part of the claim.  If $\lambda = \mu^{2}$, the integral is equal to $meas.((K \cap H) p^{(\mu_{1}, \dots, \mu_{n}, \mu_{n}, \dots, \mu_{1})} (K \cap H))$, which is equivalent to $meas.(K'p^{\mu}K')$ inside $Gl_{n}(\mathbb{Q}_{p}(\sqrt{a}))$.  We can compute this last quantity using \cite[Ch. V]{Mac}.  Applying that result to the group $Gl_{n}(\mathbb{Q}_{p}(\sqrt{a}))$, and noting that $p^{2}$ is the size of the residue field of $\mathbb{Q}_{p}(\sqrt{a})$ gives
\begin{equation*}
|K'p^{\mu}K'| = (p^{2})^{\langle\mu, \rho_{3}\rangle} \frac{\Big(\prod_{i=1}^{n} (1-p^{-2i})\Big)/ (1-p^{-2})^{n}}{\Big(\prod_{j \geq 0} \prod_{i=1}^{m_{j}(\mu)} (1-p^{-2i})\Big)/(1-p^{-2})^{n}} = p^{2\langle\mu, \rho_{3}\rangle} \frac{v_{n}(p^{-2})}{v_{\mu}(p^{-2})}.
\end{equation*}

\textbf{Case 2:}
Note that we have the following Cartan decompositions:
\begin{equation*}
G = Gl_{2n}(\mathbb{Q}_{p}(\sqrt{a})) = \bigcup_{\lambda \in \Lambda_{2n}} \Big(Gl_{2n}(\mathbb{Z}_{p}(\sqrt{a})) p^{\lambda} Gl_{2n}(\mathbb{Z}_{p}(\sqrt{a})) \Big)
\end{equation*}
and
\begin{equation*}
H = Gl_{2n}(\mathbb{Q}_{p}) = \bigcup_{\lambda \in \Lambda_{2n}} \Big(Gl_{2n}(\mathbb{Z}_{p}) p^{\lambda} Gl_{2n}(\mathbb{Z}_{p})\Big),
\end{equation*}
where in both cases the unions are disjoint.  Note also that $Gl_{2n}(\mathbb{Z}_{p}) = K' \subset K = Gl_{2n}(\mathbb{Z}_{p}(\sqrt{a}))$.  Thus, the intersection $Kp^{\lambda}K \cap H$ is exactly $K'p^{\lambda}K'$.  Finally, from \cite[Ch. V (2.9)]{Mac}, we have
\begin{equation*}
\text{measure of } K'p^{\lambda}K' = p^{2\langle \lambda, \rho_{2} \rangle} \frac{v_{2n}(p^{-1})}{v_{\lambda}(p^{-1})}=p^{2\langle \lambda, \rho_{2} \rangle} \frac{\phi_{2n}(p^{-1})}{\prod_{i \geq 0} \phi_{m_{i}(\lambda)}(p^{-1})},
\end{equation*}
as desired.

\textbf{Case 3:}
Note that we have the following Cartan decompositions:
\begin{equation*}
G= Gl_{2n}(\mathbb{Q}_{p}) = \bigcup_{\lambda \in \Lambda_{2n}} \Big(Gl_{2n}(\mathbb{Z}_{p}) p^{\lambda} Gl_{2n}(\mathbb{Z}_{p})\Big)
\end{equation*}
and
\begin{equation*}
H= Sp_{2n}(\mathbb{Q}_{p}) = \bigcup_{\substack{\lambda = \mu \bar{\mu}\\ \text{in } \Lambda_{2n}}} \Big( Sp_{2n}(\mathbb{Z}_{p}) p^{\lambda} Sp_{2n}(\mathbb{Z}_{p}) \Big)
\end{equation*}
where in both cases the unions are disjoint.  This implies that the intersection $Kp^{\lambda}K \cap H$ is zero if $\lambda \neq \mu \bar{\mu}$ for some $\mu$ giving the vanishing part of the result.  If $\lambda = \mu \bar{\mu}$, the intersection is $K'p^{\lambda}K'$.  We use \cite{MacP} (which deals with the general reductive $p$-adic group case) to compute
\begin{equation*}
\text{measure of } K'p^{\mu{\bar{\mu}}}K' = p^{2\langle \mu, \rho_{1}\rangle } \frac{\phi_{n}(p^{-2})}{\phi_{n-l(\mu)}(p^{-2})(1-p^{-1})^{l(\mu)}v_{\mu^{+}}(p^{-1})},
\end{equation*}
as desired.

\textbf{Case 4:}
Analogous to the arguments of the previous cases.  We will indicate the differences.  The double coset $Kp^{\lambda}K$ in $G = Gl_{2n}(\mathbb{Q}_{p})$ contains $Gl_{n}(\mathbb{Z}_{p})p^{\mu}Gl_{n}(\mathbb{Z}_{p}) \times Gl_{n}(\mathbb{Z}_{p})p^{\nu}Gl_{n}(\mathbb{Z}_{p})$, for any $\mu, \nu$ such that $\mu \cup \nu = \lambda$ (identifying this with its image in $G$ under the embedding of Case 4), since the element
\begin{equation*}
\left( \begin{array}{cc}
p^{\mu} & 0   \\
0 & p^{\nu}  \\  \end{array} \right)
\end{equation*}
is in $Kp^{\lambda}K$ and $(k_{1}, k_{2}) \in Gl_{2n}(\mathbb{Z}_{p})$ for any $k_{1}, k_{2} \in Gl_{n}(\mathbb{Z}_{p})$.  One can also easily show that $Kp^{\lambda}K$ only contains double cosets of $H$ of this form.  Finally, one computes the measures of $Gl_{n}(\mathbb{Z}_{p})p^{\mu}Gl_{n}(\mathbb{Z}_{p})$ and $Gl_{n}(\mathbb{Z}_{p})p^{\nu}Gl_{n}(\mathbb{Z}_{p})$ using \cite[Ch. V]{Mac}, for example, to obtain the result.

\end{proof}

We now discuss the related integral identities corresponding to the special case $\mu = 0$.

\begin{theorem}\label{symplectic}
Symplectic identity (Case 1), \cite[Corollary 14]{VV}.  Let $\lambda$ be a partition of length at most $2n$.  Then 
\begin{equation*}
\frac{1}{Z} \int_{T} P_{\lambda}^{(2n)}(x_{1}^{\pm 1}, \dots, x_{n}^{\pm 1} ;t) \tilde \Delta_{K}^{(n)}(x; t;\pm \sqrt{t},0,0) dT =  0, 
\end{equation*}
unless $\lambda = \mu^{2}$.  In this case, the nonzero value is 
\begin{equation*}
\frac{v_{n}(t^{2})}{v_{\mu}(t^{2})}
\end{equation*}
(here the normalization factor $Z = \int_{T} \tilde \Delta_{K}^{(n)}(x;t;\pm \sqrt{t},0,0) dT$).
\end{theorem}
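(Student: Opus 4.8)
The plan is to deduce Theorem~\ref{symplectic} directly from the machinery already assembled, specializing everything to Case 1 at $\mu = 0$. First I would set $\mu = 0$ in Proposition~\ref{prop1}. On the left-hand side one gets the quantity $\int_S (c_\lambda \star ch_0)(s)\, ch_0(s)\, ds$, and on the right-hand side, since $\rho_1, \rho_3$ pair trivially with $\mu = 0$ and $K_0^{BC_n} \equiv 1$, the factor $p^{\langle \mu, \rho_1\rangle + \langle \lambda, \rho_2\rangle}$ collapses to $p^{\langle \lambda, \rho_2\rangle}$ and the Koornwinder polynomial disappears, leaving precisely the integral appearing in the theorem up to the factor $p^{\langle \lambda, \rho_2\rangle}/Z$. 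The identification $Z = \int_T \tilde\Delta_K^{(n)}(x;t;\pm\sqrt t, 0, 0)\,dT$ is exactly the normalization stated in Proposition~\ref{prop1} (the value at $\lambda = \mu = 0$), so the normalizations match.

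Next I would invoke Proposition~\ref{prop2} at $\mu = 0$. The subtlety is that Proposition~\ref{prop1} is phrased with $c_\lambda$ while Proposition~\ref{prop2} is phrased with $\check c_\lambda$; however, replacing $\lambda$ by its reversal (equivalently using that the Satake transform and the Cartan decomposition behave symmetrically under $g \mapsto g^{-1}$) lets me pass between the two, and at $\mu = 0$ the right-hand side of Proposition~\ref{prop2} is simply $\int_H c_\lambda(g)\,dg$ as recorded in its final sentence. Combining these two propositions (which is precisely the content of the proof of Theorem~\ref{genthm}) yields
\begin{equation*}
\frac{1}{Z}\int_T P_\lambda^{(2n)}(x_i^{\pm 1}; t)\,\tilde\Delta_K^{(n)}(x; t; \pm\sqrt t, 0, 0)\, dT = p^{-\langle \lambda, \rho_2\rangle}\int_H c_\lambda(h)\, dh,
\end{equation*}
with $t = p^{-1}$.

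Finally I would substitute the Case~1 evaluation of $\int_H c_\lambda(h)\,dh$ from Lemma~\ref{lemma}. That lemma gives $0$ unless $\lambda = \mu^2$, which immediately produces the stated vanishing. When $\lambda = \mu^2$, the lemma gives $p^{2\langle \mu, \rho_3\rangle}\, v_n(p^{-2})/v_\mu(p^{-2})$, and the prefactor $p^{-\langle \lambda, \rho_2\rangle}$ must cancel the power of $p$ to leave the clean value $v_n(t^2)/v_\mu(t^2)$. The one genuine computation to carry out is the exponent bookkeeping: I must verify that $\langle \mu^2, \rho_2\rangle = 2\langle \mu, \rho_3\rangle$, where $\rho_2 = (n-1/2, \dots, 1/2-n) \in \mathbb{C}^{2n}$ and $\rho_3 = (n-1, \dots, 1-n) \in \mathbb{C}^n$. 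Pairing $\mu^2 = (\mu_1, \mu_1, \dots, \mu_n, \mu_n)$ against $\rho_2$ groups the $2n$ coordinates of $\rho_2$ into $n$ consecutive pairs whose sums are $2(n-1), 2(n-3), \dots, 2(1-n)$, i.e. twice the entries of $\rho_3$, so indeed $\langle \mu^2, \rho_2\rangle = 2\langle \mu, \rho_3\rangle$ and the $p$-powers cancel exactly. This exponent-matching step is the main (and essentially only) obstacle; once it is checked, setting $t = p^{-1}$ so that $t^2 = p^{-2}$ completes the identification and the theorem follows.
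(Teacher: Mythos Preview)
Your approach is essentially the paper's: specialize Theorem~\ref{genthm} (equivalently, the combination of Propositions~\ref{prop1} and~\ref{prop2}) to Case~1 with $\mu=0$, insert the Case~1 evaluation of $\int_H c_\lambda(h)\,dh$ from Lemma~\ref{lemma}, and verify the exponent identity $\langle \mu^2,\rho_2\rangle = 2\langle \mu,\rho_3\rangle$. That computation is correct and matches the paper's.

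There is, however, one genuine gap. Everything you have done is specific to $t=p^{-1}$ for an odd prime $p$: the $p$-adic machinery of Propositions~\ref{prop1}, \ref{prop2} and Lemma~\ref{lemma} only produces the identity at those special values. Your final sentence ``setting $t=p^{-1}$ \dots\ completes the identification and the theorem follows'' does not yet give the theorem, which is asserted for \emph{all} $t$. The paper closes this gap explicitly: both sides of the asserted identity are rational functions of $t$, and since the identity holds at the infinite set $\{p^{-1}: p\text{ an odd prime}\}$, it must hold identically. You need to add this rationality/density argument; without it, the proof is incomplete.
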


\begin{theorem} \label{Kawanaka}
Finite-dimensional version of Kawanaka's identity (Case 2), \cite[Corollary 15]{VV}.  Let $\lambda$ be a partition of length at most $2n$.  Then
\begin{align*}
\frac{1}{Z} \int_{T} P_{\lambda}(x_{1}^{\pm 1}, \dots, x_{n}^{\pm 1};t) \tilde \Delta_{K}^{(n)}(x;t;1,\sqrt{t},0,0) &= \frac{v_{2n}(\sqrt{t})}{v_{\lambda}(\sqrt{t})} 
\end{align*}
(here the normalization factor $Z = \int_{T} \tilde \Delta_{K}^{(n)}(x;t;1,\sqrt{t},0,0) dT$).  
\end{theorem}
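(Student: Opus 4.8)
The plan is to obtain Theorem \ref{Kawanaka} as the $\mu = 0$ specialization of Case 2 of the main result. Setting $\mu = 0$ in Proposition \ref{prop2} collapses the right-hand side, since $g_0 = I_{2n}$ and $c_\lambda(k_0 h) = c_\lambda(h)$ by $K$-bi-invariance, leaving the clean quantity $\int_H c_\lambda(h)\,dh$. On the other side, Proposition \ref{prop1} at $\mu = 0$ (so that $K_0^{BC_n} \equiv 1$ and the factor $p^{2\langle \mu, \rho_1\rangle}$ disappears) identifies $\int_S (\check c_\lambda \star ch_0)(s)\, ch_0(s)\, ds$ with the normalized torus integral appearing in the statement. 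Thus the entire theorem reduces to evaluating $\int_H c_\lambda(h)\,dh$, which is exactly the content of Lemma \ref{lemma}, Case 2.

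First I would record that, after substituting $t = p^{-2}$ (so that $\sqrt{t} = p^{-1}$, matching the Koornwinder parameters $1, p^{-1}, 0, 0$ in density $\Delta_2$), the torus integral on the left-hand side of Theorem \ref{Kawanaka} is precisely the $\mu = 0$ case of the Case 2 integral in Proposition \ref{prop1}, and hence equals $\int_H c_\lambda(h)\,dh$ up to the displayed normalization $Z$. Second, I would invoke Lemma \ref{lemma}, Case 2, which gives
\begin{equation*}
\int_H c_\lambda(h)\,dh = p^{2\langle \lambda, \rho_2\rangle} \frac{v_{2n}(p^{-1})}{v_\lambda(p^{-1})}.
\end{equation*}
The key conceptual input behind that lemma is that in Case 2 we have $K' = Gl_{2n}(\mathbb{Z}_p) \subset K = Gl_{2n}(\mathbb{Z}_p(\sqrt{a}))$, so the intersection $Kp^\lambda K \cap H$ is exactly the double coset $K'p^\lambda K'$ in $Gl_{2n}(\mathbb{Q}_p)$, whose measure is computed from \cite[Ch.~V]{Mac}. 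Unlike Cases 1 and 3, there is no parity obstruction here, which is why the evaluation is nonzero for every $\lambda$.

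Third, I would translate the $p$-adic answer into the symmetric-function normalization of the theorem. Writing $t = p^{-2}$ converts $v_{2n}(p^{-1}) = v_{2n}(\sqrt{t})$ and $v_\lambda(p^{-1}) = v_\lambda(\sqrt{t})$, so the ratio becomes $v_{2n}(\sqrt{t})/v_\lambda(\sqrt{t})$ as claimed. The remaining task is bookkeeping: one must check that the prefactor $p^{2\langle\lambda,\rho_2\rangle}$ together with the $p^{2\langle\lambda,\rho_2\rangle}$-type powers of $p$ hidden in the Satake normalization of $\tilde c_\lambda(z)$ (see the Case 2 computation in Proposition \ref{prop1}, where $\tilde c_\lambda(z) = p^{2\langle\lambda,\rho_2\rangle} P_\lambda^{(2n)}(\cdots;p^{-2})$) cancel against the normalizing factor $Z$, leaving only the ratio of $v$'s. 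The main obstacle is precisely this normalization reconciliation: one has to verify that the power-of-$p$ prefactors on both sides of the chain Proposition \ref{prop1} $\to$ Proposition \ref{prop2} $\to$ Lemma \ref{lemma} combine to cancel exactly, so that dividing by $Z$ (the $\lambda = 0$ value) produces the stated rational function in $\sqrt{t}$ with no stray powers of $p$. This is routine but must be done carefully, since the three propositions carry different explicit $p$-powers and it is easy to be off by a factor such as $p^{2\langle\lambda,\rho_2\rangle}$.
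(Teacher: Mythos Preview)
Your proposal is essentially the paper's own argument: specialize Case~2 of Theorem~\ref{genthm} (equivalently, Propositions~\ref{prop1} and~\ref{prop2}) at $\mu=0$, evaluate $\int_H c_\lambda(h)\,dh$ via Lemma~\ref{lemma}, and observe that the factor $p^{-2\langle\lambda,\rho_2\rangle}$ from Theorem~\ref{genthm} cancels exactly against the $p^{2\langle\lambda,\rho_2\rangle}$ in the measure formula, leaving $v_{2n}(p^{-1})/v_\lambda(p^{-1}) = v_{2n}(\sqrt{t})/v_\lambda(\sqrt{t})$ at $t=p^{-2}$. (Your worry about ``stray powers of $p$'' is thus unfounded: the cancellation is a single clean one between these two explicit factors, not something hidden inside $Z$.)

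The one genuine omission in your write-up is the final passage from ``the identity holds at $t=p^{-2}$ for every odd prime $p$'' to ``the identity holds for all $t$.'' The paper makes this step explicit: both sides of the claimed identity are rational functions of $t$ (or of $\sqrt{t}$), and a rational function identity that holds at infinitely many values of the variable holds identically. Without this sentence your argument only establishes the theorem on the sequence $t\in\{p^{-2}:p\text{ odd prime}\}$, which is not the full statement. You should add it.
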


\begin{theorem} \label{other} 
(Case 3), \cite[Theorem 22]{VV}.  Let $\lambda$ be a weight of the double cover of $GL_{2n}$, i.e., a half-integer vector such that $\lambda_{i} - \lambda_{j} \in \mathbb{Z}$ for all $i,j$.  Then 
\begin{align*}
\frac{1}{Z}\int_{T} P_{\lambda}^{(2n)}( \cdots t^{\pm 1/2}z_{i} \cdots ;t) \tilde \Delta_{S}^{(n)}(x; t^{2}) dT &=0,
\end{align*}
unless $\lambda = \mu \bar{\mu}$.  In this case, the nonzero value is
\begin{align*}
\frac{\phi_{n}(t^{2})}{(1-t)^{n}v_{\mu}(t) (1+t)(1+t^{2}) \cdots (1+t^{n-l(\mu)})}
\end{align*}  
(here the normalization factor $Z = \int_{T} \tilde \Delta_{S}(z;t^{2})dT$).
\end{theorem}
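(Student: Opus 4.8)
The plan is to obtain Theorem \ref{other} as the specialization $\mu = 0$ of the Case 3 identity in Theorem \ref{genthm}, followed by an evaluation of the resulting $p$-adic integral via Lemma \ref{lemma}. Setting $\mu = 0$ in Case 3 of Theorem \ref{genthm} produces several simplifications: the factor $P_{0}^{(n)}(x^{-1}; p^{-2})$ equals $1$, the ratio $V_{0}/V_{\mu}$ becomes $1$, the group element $g_{-\mu}$ becomes the identity, and the $\rho_{3}$-contribution to the exponent vanishes. What survives is
\begin{equation*}
\frac{1}{Z}\int_{T} P_{\lambda}^{(2n)}(p^{\pm 1/2} x_{i}; p^{-1})\, \tilde \Delta_{S}^{(n)}(x; p^{-2})\, dT = p^{-\langle \lambda, \rho_{2}\rangle} \int_{H} c_{\lambda}(h)\, dh .
\end{equation*}
Under $t = p^{-1}$ the left-hand side is exactly the integral appearing in Theorem \ref{other}; the half-integer weight $\lambda$ and the $t^{\pm 1/2}$ shift in its argument correspond, in the $p$-adic dictionary, to the $\pm 1/2$ shifts built into the Case 3 eigenvalue map $\tilde f$, so no extra work is needed to accommodate them.

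Next I would feed in Lemma \ref{lemma}, Case 3, which identifies $\int_{H} c_{\lambda}(h)\, dh$ with the Haar measure of $Sp_{2n}(\mathbb{Q}_{p}) \cap K p^{\lambda} K$. The vanishing half of Theorem \ref{other} is then immediate: the symplectic Cartan decomposition only produces double cosets indexed by $\lambda = \mu\bar{\mu}$, so for every other $\lambda$ the intersection is empty and the integral is $0$. When $\lambda = \mu\bar{\mu}$, the same lemma supplies the explicit value
\begin{equation*}
\int_{H} c_{\lambda}(h)\, dh = p^{2\langle \mu, \rho_{1}\rangle} \frac{\phi_{n}(p^{-2})}{\phi_{n-l(\mu)}(p^{-2}) (1-p^{-1})^{l(\mu)} v_{\mu^{+}}(p^{-1})} .
\end{equation*}

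The last step is purely bookkeeping. First I would verify that the exponent cancels: since the coordinates of $\rho_{2}$ are anti-symmetric under reversal and $\lambda = \mu\bar{\mu} = (\mu_{1}, \dots, \mu_{n}, -\mu_{n}, \dots, -\mu_{1})$, a direct computation gives $\langle \lambda, \rho_{2}\rangle = 2\langle \mu, \rho_{1}\rangle$, whence $p^{-\langle \lambda, \rho_{2}\rangle} \cdot p^{2\langle \mu, \rho_{1}\rangle} = 1$ and the nonzero value of the integral is precisely the measure factor above. Substituting $t = p^{-1}$ and applying the elementary identities $\phi_{m}(t^{2}) = \phi_{m}(t)\prod_{i=1}^{m}(1+t^{i})$, $v_{\mu}(t) = v_{n-l(\mu)}(t)\, v_{\mu^{+}}(t)$, and $v_{n-l(\mu)}(t) = \phi_{n-l(\mu)}(t)/(1-t)^{n-l(\mu)}$, I would recast the denominator $\phi_{n-l(\mu)}(t^{2})(1-t)^{l(\mu)} v_{\mu^{+}}(t)$ in the form $(1-t)^{n} v_{\mu}(t)(1+t)(1+t^{2})\cdots(1+t^{n-l(\mu)})$, which is exactly the claimed rational function. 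Because both Theorem \ref{genthm} and Lemma \ref{lemma} are already established, the only genuine obstacle is this final reconciliation of two different-looking closed forms --- in particular keeping the $p$-power cancellation and the regrouping of $t$-factors into $t^{2}$-factors exactly right, together with a careful matching of the half-integer weight conventions used in the reduction above. I expect this to be delicate algebraically but not conceptually.
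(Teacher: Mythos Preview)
Your proposal is correct and matches the paper's approach exactly: set $\mu = 0$ in Case 3 of Theorem \ref{genthm}, evaluate the right-hand side via Lemma \ref{lemma} (Case 3), and cancel the exponential prefactors using $2\langle \mu, \rho_{1}\rangle = \langle \lambda, \rho_{2}\rangle$. The only step the paper makes explicit that you leave implicit is the passage from $t = p^{-1}$ (valid for every odd prime $p$) to arbitrary $t$, by observing that both sides are rational functions agreeing at infinitely many points.
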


\begin{theorem} \label{new}
(Case 4).  Let $\lambda$ be a partition of length at most $2n$.  Then 
\begin{equation*}
\frac{1}{Z} \int_{T} P_{\lambda}^{(2n)}(x_{1}^{\pm 1}, \dots, x_{n}^{\pm 1};t) \tilde \Delta_{K}^{(n)}(x;t; \sqrt{t}, \sqrt{t},0,0) dT = \sum_{\mu \cup \nu = \lambda} t^{-\langle \mu, \rho_{3} \rangle - \langle \nu, \rho_{3} \rangle + \langle \lambda, \rho_{2} \rangle}  \frac{v_{n}(t)^{2}}{v_{\mu}(t)v_{\nu}(t)}
\end{equation*}

\end{theorem}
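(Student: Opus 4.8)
The plan is to derive Theorem \ref{new} directly from the two main propositions of this section, specialized to $\mu = 0$, together with the $\mu = 0$ measure computation of Lemma \ref{lemma} in Case 4. First I would combine Proposition \ref{prop1} and Proposition \ref{prop2} in Case 4: the left-hand sides of both identities are the \emph{same} quantity $\int_{S} (\check c_{\lambda} \star ch_{0})(s) ch_{\mu}(s)\, ds$ (after replacing $c_\lambda$ by $\check c_\lambda = c_{-\lambda}$, which only permutes the Satake variables and hence does not change the integral against the symmetric Koornwinder density). Setting $\mu = 0$ collapses the factors $p^{\langle \mu, \cdot\rangle}$, the ratio $V_0/V_\mu$ becomes $1$, and $g_0 = \mathrm{Id}_{2n}$, $c_\lambda(k_0 h) = c_\lambda(h)$, so the right-hand side of Proposition \ref{prop2} reduces to $\int_H c_\lambda(h)\, dh$. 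Equating the two expressions for the common left-hand side yields
\begin{equation*}
\frac{p^{\langle \lambda, \rho_2\rangle}}{Z} \int_{T} P_{\lambda}^{(2n)}(x_i^{\pm 1}; p^{-1}) \tilde\Delta_K^{(n)}(x; p^{-1}; p^{-1/2}, p^{-1/2}, 0, 0)\, dT = \int_H c_\lambda(h)\, dh,
\end{equation*}
where I have used $K_0^{BC_n} = 1$ and the shorthand $\Delta_4$ of Equation \ref{den4}.

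Next I would substitute the Case 4 evaluation of $\int_H c_\lambda(h)\, dh$ from Lemma \ref{lemma}, namely $\sum_{\mu \cup \nu = \lambda} p^{\langle \mu, \rho_3\rangle + \langle \nu, \rho_3\rangle} \frac{v_n(p^{-1})^2}{v_\mu(p^{-1}) v_\nu(p^{-1})}$. Solving for the torus integral and moving the prefactor $p^{\langle \lambda, \rho_2\rangle}$ to the other side gives
\begin{equation*}
\frac{1}{Z}\int_{T} P_{\lambda}^{(2n)}(x_i^{\pm 1}; p^{-1}) \Delta_4\, dT = \sum_{\mu \cup \nu = \lambda} p^{-\langle \lambda, \rho_2\rangle + \langle \mu, \rho_3\rangle + \langle \nu, \rho_3\rangle} \frac{v_n(p^{-1})^2}{v_\mu(p^{-1}) v_\nu(p^{-1})}.
\end{equation*}
This is precisely the asserted identity with the substitution $t = p^{-1}$: one checks that $p^{-\langle\lambda,\rho_2\rangle} = t^{\langle \lambda, \rho_2\rangle}$ and $p^{\langle\mu,\rho_3\rangle} = t^{-\langle\mu,\rho_3\rangle}$, matching the exponent $t^{-\langle\mu,\rho_3\rangle - \langle\nu,\rho_3\rangle + \langle\lambda,\rho_2\rangle}$ in the statement.

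Finally, I would argue that although the derivation above is carried out for $t = p^{-1}$ with $p$ an odd prime, the resulting identity is an equality of rational functions in $t$ (both sides are rational in $t$, the left-hand side because Hall-Littlewood and Koornwinder $q=0$ polynomials have coefficients rational in $t$ and the density integrates to a rational function by the explicit norm formulas cited from \cite{VV2, MacP}). Since it holds for the infinitely many values $t = p^{-1}$ as $p$ ranges over odd primes, it holds identically in $t$. The main obstacle I anticipate is the bookkeeping in matching the powers of $p$: one must carefully track that the $\langle\lambda,\rho_2\rangle$ contribution from the Satake transform of $c_\lambda$ in Proposition \ref{prop1} combines correctly with the $\langle\mu,\rho_3\rangle + \langle\nu,\rho_3\rangle$ exponents coming from the two $Gl_n$ measure factors in Lemma \ref{lemma} Case 4, and that the sign conventions in $\rho_2, \rho_3$ produce exactly the stated exponent; the rest is a routine assembly of already-proven ingredients.
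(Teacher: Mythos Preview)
Your proposal is correct and follows essentially the same route as the paper: the paper's proof simply cites Case 4 of Theorem \ref{genthm} (which is exactly your combination of Propositions \ref{prop1} and \ref{prop2}) together with Lemma \ref{lemma}, and then invokes the same rational-function-in-$t$ argument over infinitely many $t=p^{-1}$. Your handling of the $\check c_\lambda$ versus $c_\lambda$ discrepancy via the $x_i \leftrightarrow x_i^{-1}$ symmetry of the Koornwinder density is a point the paper leaves implicit, but otherwise the arguments coincide.
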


Note that the second identity is \textit{not} a vanishing identity.  This identity is a finite-dimensional analog of a result of Kawanaka (see \cite{Ka1}, \cite{Ka2}).  Kawanaka's identity has an interesting representation-theoretic significance for general linear groups over finite fields: it encodes the fact that the symmetric space $Gl_{n}(\mathbb{F}_{p^{2}})/Gl_{n}(\mathbb{F}_{p})$ is multiplicity free.

We also note that the fourth identity is new (as far as we know); also if we set $t=0$, we recover Theorem \ref{symplecticgp}, part (1), so it is a generalization of that result.

We are now prepared to provide $p$-adic proofs of Theorems \ref{symplectic}, \ref{Kawanaka}, \ref{other}, and \ref{new}.
\begin{proof}[Proof of Theorem \ref{symplectic}]
The vanishing part of the statement is automatic from \textbf{Case 1} of Theorem \ref{genthm} and Lemma \ref{lemma}.  To obtain the nonzero value, let $\lambda = \mu^{2}$.  Then we can compute
\begin{multline*}
2\langle\mu, \rho_{3}\rangle = 2 \Big( (n-1)\mu_{1} + (n-3)\mu_{2} + \cdots + (1-n)\mu_{n} \Big) 
= (n-1)(\mu_{1} + \mu_{1}) + (n-3)(\mu_{2} + \mu_{2})+ \cdots + (1-n)(\mu_{n} + \mu_{n}) \\
= (n-1)(\lambda_{1} + \lambda_{2}) + (n-3)(\lambda_{3} + \lambda_{4}) + \cdots + (1-n)(\lambda_{2n-1} + \lambda_{2n}) \\
= \lambda_{1}(n-1/2) + \lambda_{2}(n- 3/2) + \cdots + \lambda_{2n}(1/2 - n) = \langle\lambda, \rho_{2}\rangle.
\end{multline*}
Thus, we obtain
\begin{equation*}
\frac{1}{Z} \int_{T} P_{\lambda}^{(2n)}(x_{i}^{\pm 1};p^{-1}) \tilde \Delta_{K}^{(n)}(x; \pm p^{-1/2},0,0;p^{-1}) dT = \begin{cases} 0 & \text{if $\lambda \neq \mu^{2}$ for any $\mu$,}
\\
 \frac{v_{n}(p^{-2})}{v_{\mu}(p^{-2})} &\text{if $\lambda = \mu^{2}$ for some $\mu$.}
\end{cases}
\end{equation*}
Thus the equation in the statement of Theorem \ref{symplectic} holds for all $t = p^{-1}$, for $p$ an odd prime, and the left hand side of the equation is a rational function in $t$.  This provides an infinite sequence of values for $t$ for which the equation holds, so in particular it holds for all values of $t$ as desired.
\end{proof}

\begin{proof}[Proof of Theorem \ref{Kawanaka}]
The identity follows from \textbf{Case 2} of Theorem \ref{genthm} and Lemma \ref{lemma}, as in the proof of Theorem \ref{symplectic} above.  Note that these arguments show that the theorem holds for all $t = p^{-2}$.  This provides an infinite sequence of values for $t$ for which the equation holds, so in particular it holds for all values of $t$ as desired.
\end{proof}

\begin{proof}[Proof of Theorem \ref{other}]
The identity follows from \textbf{Case 3} of Theorem \ref{genthm} and Lemma \ref{lemma}, as in the proof of Theorem \ref{symplectic} above.  If $\lambda = \mu\bar{\mu}$ for some $\mu$, the integral is non-vanishing.  The evaluation follows by noting that $2\langle \mu, \rho_{1} \rangle = \langle \lambda, \rho_{2} \rangle$.  Note that these arguments show that the theorem holds for all $t = p^{-1}$.  This provides an infinite sequence of values for $t$ for which the equation holds, so in particular it holds for all values of $t$ as desired.
\end{proof}

\begin{proof}[Proof of Theorem \ref{new}]
The identity follows from \textbf{Case 4} of Theorem \ref{genthm} and Lemma \ref{lemma}, as in the proof of Theorem \ref{symplectic} above.  Note that these arguments show that the theorem holds for all $t = p^{-1}$.  This provides an infinite sequence of values for $t$ for which the equation holds, so in particular it holds for all values of $t$ as desired.
\end{proof}

\begin{remarks}
In \textbf{Case 1}, the involution is $g \rightarrow g^{\star} = g^{-1}$ and the action is $g \cdot x = gxg^{*}$.  Then $H$ is the stabilizer in $G$ of $s_{0}$ under this action.  But $H = \{ g \in G | gs_{0}g^{*} = s_{0} \} = \{g \in G | g = s_{0}g^{*^{-1}}s_{0}^{-1} \}$.  So $H$ is the set of fixed points of the order $2$ homomorphism $g \rightarrow s_{0}g^{*^{-1}}s_{0}^{-1}$.  This provides an analog of Theorem (\ref{symplecticgp}), where one restricts $s_{\lambda}$ to the subgroup of fixed points of a suitable involution.  The other cases are analogous.
\end{remarks}

We now demonstrate how one can use Theorem \ref{new} to obtain a new Littlewood summation identity, namely that of Theorem \ref{Littlewood}.

\begin{proof}[Proof of Theorem \ref{Littlewood}]

From \cite[Lemma 7.18]{R}, we obtain the following integral identity
\begin{equation} \label{inf}
\frac{1}{Z} \int_{T} \prod_{j,k} \frac{1-tx_{j}y_{k}^{\pm 1}}{1-x_{j}y_{k}^{\pm 1}} \tilde \Delta_{K}(y;t;\sqrt{t}, \sqrt{t}, 0, 0) dT = \prod_{j<k} \frac{1-tx_{j}x_{k}}{1-x_{j}x_{k}} \prod_{j} \frac{1-tx_{j}^{2}}{(1-\sqrt{t}x_{j})(1-\sqrt{t}x_{j})},
\end{equation}
where the integral in the LHS is with respect to the $y$-variables.  Recall that the Cauchy identity for Hall-Littlewood functions is
\begin{equation*}
\sum_{\lambda} P_{\lambda}(x;t) Q_{\lambda}(y;t) = \prod_{i,j \geq 1} \frac{1-tx_{i}y_{j}}{1-x_{i}y_{j}},
\end{equation*}
where
\begin{equation*}
Q_{\lambda}(x;t) = b_{\lambda}(t)P_{\lambda}(x;t).
\end{equation*}
Using this on the product in the LHS of (\ref{inf}) allows us to rewrite that equation as 
\begin{multline} \label{inf2}
\sum_{\lambda} P_{\lambda}(x;t) \lim_{n \rightarrow \infty} \frac{1}{Z} \Bigg[ \int_{T} b_{\lambda}(t) P_{\lambda}(y_{1}^{\pm 1}, \dots, y_{n}^{\pm 1};t) \tilde \Delta_{K}^{(n)}(y;t;\sqrt{t}, \sqrt{t},0,0) dT \Bigg] \\
= \prod_{j<k} \frac{1-tx_{j}x_{k}}{1-x_{j}x_{k}} \prod_{j} \frac{1-tx_{j}^{2}}{(1-\sqrt{t}x_{j})^{2}} = \prod_{j<k} \frac{1-tx_{j}x_{k}}{1-x_{j}x_{k}} \prod_{j} \frac{1+\sqrt{t}x_{j}}{1-\sqrt{t}x_{j}}.
\end{multline}
Recall that by Theorem \ref{new}, we have
\begin{multline*}
\frac{1}{Z} \Bigg[ \int_{T} b_{\lambda}(t) P_{\lambda}(y_{1}^{\pm 1}, \dots, y_{n}^{\pm 1};t) \tilde \Delta_{K}^{(n)}(y;t;\sqrt{t}, \sqrt{t},0,0) dT \Bigg] \\= \sum_{\substack{\mu \cup \nu = \lambda \\ l(\mu), l(\nu) \leq n}} t^{-\langle \mu, \rho_{3} \rangle - \langle \nu, \rho_{3} \rangle + \langle \lambda, \rho_{2}\rangle} \frac{v_{n}(t)^{2} b_{\lambda}(t)}{v_{\mu}(t) v_{\nu}(t)}.
\end{multline*}

So we want to compute
\begin{equation} \label{limit}
 \lim_{n \rightarrow \infty} \sum_{\substack{\mu \cup \nu = \lambda \\ l(\mu), l(\nu) \leq n}} t^{-\langle \mu, \rho_{3} \rangle - \langle \nu, \rho_{3} \rangle + \langle \lambda, \rho_{2}\rangle} \frac{v_{n}(t)^{2} b_{\lambda}(t)}{v_{\mu}(t) v_{\nu}(t)}.
\end{equation}
First, note that
\begin{equation*}
\frac{v_{n}(t)^{2}b_{\lambda}(t)}{v_{\mu}(t)v_{\nu}(t)} = \frac{\phi_{n}(t)^{2} \prod_{i \geq 1} \phi_{m_{i}(\lambda)}(t)}{\prod_{i \geq 0} \phi_{m_{i}(\mu)}(t) \prod_{i \geq 0} \phi_{m_{i}(\nu)}(t)}.
\end{equation*}
Also
\begin{equation*}
\lim_{n \rightarrow \infty} \frac{\phi_{n}(t)^{2}}{\phi_{m_{0}(\mu)}(t) \phi_{m_{0}(\nu)}(t)} = \lim_{n \rightarrow \infty} (1-t^{m_{0}(\mu) + 1}) \cdots (1-t^{n})(1-t^{m_{0}(\nu) + 1}) \cdots (1-t^{n}) = 1,
\end{equation*}
since as $n \rightarrow \infty$, we have $m_{0}(\nu), m_{0}(\mu) \rightarrow \infty$ with $n-m_{0}(\mu), n - m_{0}(\nu)$ fixed.
Note that
\begin{equation*}
\rho_{2} = n^{2n}- \Big(\frac{1}{2}, \frac{3}{2}, \dots, \big(2n - \frac{1}{2}\big) \Big),
\end{equation*}
and
\begin{equation*}
\rho_{3} = n^{n} - (1,3,5, \dots, (2n-1)).
\end{equation*}
So for $\mu, \nu$ such that $\mu \cup \nu = \lambda$ we have
\begin{equation*}
-\langle \mu + \nu, n^{n} \rangle + \langle \lambda, n^{2n} \rangle = 0.
\end{equation*}

Thus, (\ref{limit}) is equal to
\begin{equation*}
 \sum_{\substack{\mu \cup \nu = \lambda }} t^{\langle \mu + \nu, (1,3,5, \dots) \rangle - \langle \lambda, (\frac{1}{2}, \frac{3}{2}, \dots) \rangle} \frac{b_{\lambda}(t)}{b_{\mu}(t) b_{\nu}(t)}.
\end{equation*}
Putting this back into (\ref{inf2}), the resulting Littlewood identity is
\begin{equation*}
\sum_{\mu, \nu} P_{\mu \cup \nu}(x;t) t^{\langle \mu + \nu, \rho \rangle - \frac{1}{2}\langle \mu \cup \nu, \rho \rangle} \frac{b_{\mu \cup \nu}(t)}{b_{\mu}(t) b_{\nu}(t)} = \prod_{j<k} \frac{1-tx_{j}x_{k}}{1-x_{j}x_{k}} \prod_{j} \frac{1+\sqrt{t}x_{j}}{1-\sqrt{t}x_{j}}.
\end{equation*}

\end{proof}

\section{Generalized integral identity}

In this section, we deal only with the symplectic case, \textbf{Case 1}; the notation is as in that case.  We will prove some stronger results by extending the methods above.  

\noindent Throughout this section, we fix $l(\lambda) \leq 2n$ and $l(\mu) \leq n$.  Then, by Theorem \ref{genthm} (Case 1), we have
\begin{multline*}
\frac{1}{Z} \int_{T} P_{\lambda}^{(2n)}(x_{i}^{\pm 1}; p^{-1}) K_{\mu}^{BC_{n}}(x; p^{-1}; \pm p^{-1/2},0,0) \tilde \Delta_{K}^{(n)}(x; \pm p^{-1/2},0,0; p^{-1}) dT \\
= p^{\langle\mu, \rho_{1}\rangle - \langle\lambda, \rho_{2}\rangle} \frac{V_{0}}{V_{\mu}} \int_{H} c_{\lambda}(hg_{\mu}^{-1}) dh.
\end{multline*}
Using the Cartan decomposition for $(Gl_{n}(\mathbb{Q}_{p}(\sqrt{a})), Gl_{n}(\mathbb{Z}_{p}(\sqrt{a})))$ and the embedding into $Gl_{2n}(\mathbb{Q}_{p})$, we have
\begin{equation*}
\int_{H} c_{\lambda}(hg_{\mu}^{-1}) dh = \sum_{\substack{\beta \in \Lambda_{2n} \\ \beta = \nu_{1} \dots \nu_{n} \nu_{n} \dots \nu_{1} \\ \text{for some $\nu$}}} \int_{K' p^{\beta} K'} c_{\lambda}(hg_{\mu}^{-1}) dh;
\end{equation*}
also note that
\begin{equation*}
\int_{K'p^{\beta}K'} c_{\lambda}(hg_{\mu}^{-1}) dh = meas.(Kp^{\lambda}Kg_{\mu} \cap K'p^{\beta}K'),
\end{equation*}
where the measure is with respect to the measure on $H$.  Thus,
\begin{multline} \label{negpts}
\frac{1}{Z} \int_{T} P_{\lambda}^{(2n)}(x_{i}^{\pm 1}; p^{-1}) K_{\mu}^{BC_{n}}(x; p^{-1}; \pm p^{-1/2},0,0) \tilde \Delta_{K}^{(n)}(x; \pm p^{-1/2},0,0; p^{-1}) dT \\
= p^{\langle\mu, \rho_{1}\rangle - \langle\lambda, \rho_{2}\rangle} \frac{V_{0}}{V_{\mu}} \sum_{\substack{\beta \in \Lambda_{2n} \\ \beta = \nu_{1} \dots \nu_{n} \nu_{n} \dots \nu_{1} \\ \text{for some $\nu$}}} meas.(Kp^{\lambda}Kg_{\mu} \cap K'p^{\beta}K').
\end{multline}

\begin{lemma}
Let $\beta = \nu_{1} \dots \nu_{n} \nu_{n} \dots \nu_{1} \in \Lambda_{2n}$ have at least one negative part.  Then $meas.(Kp^{\lambda}Kg_{\mu} \cap K'p^{\beta}K') = 0$.
\end{lemma}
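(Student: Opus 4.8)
The plan is to prove the sharper statement that the set $Kp^{\lambda}Kg_{\mu} \cap K'p^{\beta}K'$ is in fact \emph{empty}; since it is an open subset of $H$ (both $Kp^{\lambda}Kg_{\mu}$ and $K'p^{\beta}K'$ are open, and Haar measure on $H$ has full support), vanishing of the measure is equivalent to emptiness. So I would suppose for contradiction that some $h$ lies in the intersection and derive a contradiction with $\nu_{n}<0$.

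First I would record two integrality facts. Because $\lambda$ is a partition, every element of $Kp^{\lambda}K$ lies in $M_{2n}(\mathbb{Z}_{p})$, so $h \in Kp^{\lambda}Kg_{\mu}$ forces $hg_{\mu}^{-1} \in M_{2n}(\mathbb{Z}_{p})$. On the other hand $g_{\mu}^{-1} = \mathrm{diag}(1,\dots,1,p^{\mu_{n}},\dots,p^{\mu_{1}})$ involves only nonnegative powers of $p$; right multiplication by it fixes the first $n$ columns of $h$ and merely scales the last $n$ columns by positive powers of $p$. Combining these with the explicit block form $h = \left(\begin{smallmatrix} i & j \\ aw_{n}jw_{n} & w_{n}iw_{n}\end{smallmatrix}\right)$, $i,j \in M_{n}(\mathbb{Q}_{p})$, of an element of $H$, the first block-column of $hg_{\mu}^{-1}$ equals that of $h$, namely $\left(\begin{smallmatrix} i \\ aw_{n}jw_{n}\end{smallmatrix}\right)$. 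Integrality of $hg_{\mu}^{-1}$ therefore forces this block-column to be integral, and since $a \in \mathbb{Z}_{p}^{\times}$ and $w_{n}$ is a permutation matrix, this yields $i,j \in M_{n}(\mathbb{Z}_{p})$, hence $h \in M_{2n}(\mathbb{Z}_{p})$ outright.

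The step I expect to be the only subtle one is precisely this last deduction: a priori one worries that the scaling by $g_{\mu}^{-1}$ could convert a genuinely non-integral $h$ into an integral matrix (note that a crude determinant/valuation count does \emph{not} rule this out, since $\det g_{\mu}^{-1}$ absorbs the extra valuation). What saves the argument is the shape of the embedding: both blocks $i$ and $j$ already appear in the \emph{first} block-column, which $g_{\mu}^{-1}$ leaves untouched, so integrality cannot be manufactured there. To finish, I would translate integrality back into $E$-Cartan data. Identifying $\mathbb{Z}_{p}^{2n} = \mathbb{Z}_{p}(\sqrt{a})^{n}$, an $E$-linear $h$ lies in $M_{2n}(\mathbb{Z}_{p})$ if and only if it lies in $M_{n}(\mathbb{Z}_{p}(\sqrt{a}))$, i.e. if and only if all of its $E$-elementary divisors are nonnegative. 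Since $h \in K'p^{\beta}K'$ has $E$-Cartan type $\nu$ (recall $\beta = \nu_{1}\cdots\nu_{n}\nu_{n}\cdots\nu_{1}$), this forces $\nu_{n} \geq 0$, contradicting the hypothesis that $\beta$ has a negative part. Hence the intersection is empty and its measure is $0$.
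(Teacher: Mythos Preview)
Your proof is correct and follows essentially the same line as the paper's: both exploit that $\lambda$ is a partition so $Kp^{\lambda}K\subset M_{2n}(\mathbb{Z}_{p})$, then use the block form of elements of $H$ together with the fact that $g_{\mu}^{-1}$ leaves the first $n$ columns untouched to force integrality, and finally contradict $\nu_{n}<0$. The only cosmetic difference is in the endgame: the paper writes an element of the intersection as $p^{\beta}k'g_{\mu}^{-1}$ with $k'\in K'$, deduces that the $n$-th row of $k'$ is divisible by $p$, and gets a contradiction from $|\det k'|=1$; you instead show $h\in M_{2n}(\mathbb{Z}_{p})$ outright and translate this back to the statement that the $E$-elementary divisors of $h$ are nonnegative, contradicting $\nu_{n}<0$. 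These are two packagings of the same contradiction.
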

\begin{proof}
Note that if $Kp^{\lambda}K \cap K'p^{\beta}K'g_{\mu}^{-1} \neq \emptyset $, then $Kp^{\lambda}K \cap p^{\beta}K' g_{\mu}^{-1} \neq \emptyset$.  We will show that $Kp^{\lambda}K \cap p^{\beta}K' g_{\mu}^{-1} = \emptyset$, which proves the claim.  

Note first that $g_{\mu}^{-1} = \text{diag}.(1, \dots, 1,p^{\mu_{n}}, \dots, p^{\mu_{1}})$.  We will write $\bar{\mu} = (\mu_{n}, \dots, \mu_{1})$.  Suppose for contradiction that
\begin{equation*}
 k'= \left( \begin{array}{cc}
i & j   \\
aw_{n}jw_{n} & w_{n}iw_{n}  \\  \end{array} \right)
\end{equation*}
is an element in $K'$ such that $p^{\beta}k'g_{\mu}^{-1} \in Kp^{\lambda}K$.  By a direct computation we have
\begin{equation*}
p^{\beta}k'g_{\mu}^{-1} = \left( \begin{array}{cc}
p^{\nu} & 0   \\
0 & p^{\bar{\nu}} \\  \end{array} \right) 
\left( \begin{array}{cc}
i & j   \\
aw_{n}jw_{n} & w_{n}iw_{n}  \\  \end{array} \right)
\left( \begin{array}{cc}
1 & 0   \\
0 & p^{\bar{\mu}} \\  \end{array} \right) 
= 
\left( \begin{array}{cc}
p^{\nu}i & p^{\nu}jp^{\bar{\mu}}   \\
p^{\bar{\nu}}aw_{n}jw_{n} & p^{\bar{\nu}}w_{n}iw_{n}p^{\bar{\mu}} \\  \end{array} \right).
\end{equation*}
Now noting that $p^{\bar{\nu}}w_{n} = w_{n}p^{\nu}$, the above becomes
\begin{equation*}
\left( \begin{array}{cc}
p^{\nu}i & p^{\nu}jp^{\bar{\mu}}   \\
aw_{n}p^{\nu}jw_{n} & w_{n}p^{\nu}ip^{\mu}w_{n} \\  \end{array} \right).
\end{equation*}
Since $p^{\beta}k'g_{\mu}^{-1} \in Kp^{\lambda}K \subset M_{2n}(\mathbb{Z}_{p})$, it follows that $p^{\nu}i$ and $p^{\nu}j$ are in $M_{n}(\mathbb{Z}_{p})$.  Since $\nu_{n} < 0$, it follows that the $n$-th row of $k'$ has entries all of which are divisible by $p$ in $\mathbb{Z}_{p}$.  
Let $B$ be the matrix obtained from $k'$ by dividing the $n$-th row by $p$; note that $B \in M_{2n}(\mathbb{Z}_{p})$.  Then 
\begin{equation*}
\det(k') = p \det(B) \in p \cdot \mathbb{Z}_{p},
\end{equation*}
which contradicts $|\det(k')| = 1$.  
\end{proof}

\noindent Thus, using the previous lemma, (\ref{negpts}) now becomes 
\begin{multline} \label{pospts}
\frac{1}{Z} \int_{T} P_{\lambda}^{(2n)}(x_{i}^{\pm 1}; p^{-1}) K_{\mu}^{BC_{n}}(x; p^{-1}; \pm p^{-1/2},0,0) \tilde \Delta_{K}^{(n)}(x; \pm p^{-1/2},0,0; p^{-1}) dT \\
= p^{\langle\mu, \rho_{1}\rangle - \langle\lambda, \rho_{2}\rangle} \frac{V_{0}}{V_{\mu}} \sum_{\substack{\beta \in \Lambda_{2n} ^{+}\\ \beta = \nu_{1} \dots \nu_{n} \nu_{n} \dots \nu_{1} \\ \text{for some $\nu$}}} meas.(Kp^{\lambda}Kg_{\mu} \cap K'p^{\beta}K').
\end{multline}

\noindent \textbf{Littlewood-Richardson coefficients and Hall polynomials:}

Recall that the Littlewood-Richardson coefficient $c^{\lambda}_{\mu \nu}$ is equal to the number of tableaux $T$ of shape $\lambda - \mu$ and weight $\nu$ such that $w(T)$, the word of $T$, is a lattice permutation.  We have
\begin{equation*}
s_{\mu}s_{\nu} = \sum_{\lambda} c^{\lambda}_{\mu \nu} s_{\lambda},
\end{equation*}
where $s_{\mu}$ is the Schur function (see \cite{Mac} for more details).

We briefly recall the Hall polynomials $g_{\mu \nu}^{\lambda}(q)$ \cite[Chs. II and V]{Mac}.  Let $\mathcal{O}$ be a complete (commutative) discrete valuation ring, $\mathcal{P}$ its maximal ideal and $k = \mathcal{O}/\mathcal{P}$ the residue field.  We assume $k$ is a finite field.  Let $q$ be the number of elements in $k$.  Let $M$ be a finite $\mathcal{O}$-module of type $\lambda$.  Then the number of submodules of $N$ of $M$ with type $\nu$ and cotype $\mu$ is a polynomial in $q$, called the Hall polynomial, denoted $g_{\mu \nu}^{\lambda}(q)$.  One can consider our motivating case of $\mathbb{Q}_{p}$ and its ring of integers $\mathcal{O} = \mathbb{Z}_{p}$ and $G = Gl_{n}(\mathbb{Q}_{p})$, so that $q=p$.  Then they are also the structure constants for the ring $\mathcal{H}(G^{+},K)$.  That is, for $\mu, \nu \in \Lambda_{2n}^{+}$, we have
\begin{equation*}
c_{\mu} \star c_{\nu} = \sum_{\lambda \in \Lambda_{2n}^{+}} g_{\mu \nu}^{\lambda}(p) c_{\lambda}.
\end{equation*}
Note that, in particular,
\begin{equation*}
g_{\mu \nu}^{\lambda}(p) = (c_{\mu} \star c_{\nu})(p^{\lambda}) = \int_{G} c_{\mu}(p^{\lambda}y^{-1})c_{\nu}(y)dy = meas.(p^{\lambda}Kp^{-\nu}K \cap Kp^{\mu}K).
\end{equation*}

Several important facts are known (see \cite[Ch. II]{Mac}):
\begin{enumerate}
\item If $c^{\lambda}_{\mu \nu} = 0$, then $g^{\lambda}_{\mu \nu}(t) = 0$ as a function of $t$.
\item If $c^{\lambda}_{\mu \nu} \neq 0$, then $g^{\lambda}_{\mu \nu}(t)$ has degree $n(\lambda) - n(\mu) - n(\nu)$ and leading coefficient $c^{\lambda}_{\mu \nu}$, where the notation $n(\lambda) = \sum (i-1) \lambda_{i}$.
\item We have $g^{\lambda}_{\mu \nu}(t) = g^{\lambda}_{\nu \mu}(t)$.
\end{enumerate}

Also if one multiplies two Hall-Littlewood polynomials, and expands the result in the Hall-Littlewood basis, one has
\begin{equation*}
P_{\mu}(x;t) P_{\nu}(x;t) = \sum_{\lambda} f^{\lambda}_{\mu \nu}(t) P_{\lambda}(x;t),
\end{equation*}
with $f^{\lambda}_{\mu \nu}(t) = t^{n(\lambda) - n(\mu) - n(\nu)} g^{\lambda}_{\mu \nu}(t^{-1})$.

\begin{lemma}\label{2d}
Let $\lambda, \mu, \beta \in \Lambda_{2n}$.  Then we have
\begin{equation*}
\int_{G} c_{-\mu}(g') \int_{G} c_{\beta}(g) c_{\lambda}(gg') dg dg' = meas.(Kp^{-\mu}K) \int_{G} c_{-\lambda}(p^{\mu}g^{-1})c_{\beta}(g)dg
\end{equation*}
\end{lemma}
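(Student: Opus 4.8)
The plan is to recognize that the inner integral over $g$ depends on the outer variable $g'$ only through a $K$-bi-invariant function; once that is established, integrating against the characteristic function $c_{-\mu}$ simply evaluates this function at $p^{-\mu}$ and multiplies by the measure of the supporting double coset. So the whole identity reduces to an invariance observation plus one bookkeeping identity relating $c_{-\lambda}$ to $c_\lambda(\cdot^{-1})$.

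First I would set $F(g') := \int_G c_\beta(g)\, c_\lambda(gg')\, dg$, so that the left-hand side reads $\int_G c_{-\mu}(g')\, F(g')\, dg'$. The key step is to check that $F$ is bi-$K$-invariant in $g'$. Right $K$-invariance is immediate from the right $K$-invariance of $c_\lambda$: replacing $g'$ by $g'k'$ leaves $c_\lambda(gg'k') = c_\lambda(gg')$ unchanged, so $F(g'k') = F(g')$. For left $K$-invariance I would substitute $g \mapsto gk^{-1}$, which is legitimate because $G = Gl_{2n}(\mathbb{Q}_p)$ is unimodular and right translation preserves $dg$; then the right $K$-invariance of $c_\beta$ gives $c_\beta(gk^{-1}) = c_\beta(g)$, while the cancellation $k^{-1}k$ inside $c_\lambda(gk^{-1}\cdot kg') = c_\lambda(gg')$ handles the other factor, yielding $F(kg') = F(g')$. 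Hence $F$ is constant on each double coset $Kp^\nu K$.

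Since $c_{-\mu}$ is the characteristic function of $Kp^{-\mu}K$ and $F \equiv F(p^{-\mu})$ on that set, the left-hand side collapses to $F(p^{-\mu})\int_{Kp^{-\mu}K} dg' = meas.(Kp^{-\mu}K)\, F(p^{-\mu})$. It then remains to identify $F(p^{-\mu}) = \int_G c_\beta(g)\, c_\lambda(gp^{-\mu})\, dg$ with the integral on the right-hand side. This uses the relation $\check c_\lambda = c_{-\lambda}$, i.e. $c_{-\lambda}(x) = c_\lambda(x^{-1})$, which holds because $Kp^\lambda K$ inverts to $Kp^{-\lambda}K$; applied at $x = p^\mu g^{-1}$ it gives $c_{-\lambda}(p^\mu g^{-1}) = c_\lambda(gp^{-\mu})$. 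Substituting this into the right-hand side integral reproduces exactly $F(p^{-\mu})$, completing the argument.

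The only real subtlety — and the step I would be most careful about — is the bookkeeping around inverses and orderings: verifying that $c_{-\lambda}(p^\mu g^{-1}) = c_\lambda(gp^{-\mu})$ and that the translations used in the invariance argument respect the Haar normalization. None of this is difficult, but it is precisely where the sign/ordering convention for $-\lambda$ (as an element of $\Lambda_{2n}$ only up to reordering of its parts) and the unimodularity of $G$ must be invoked explicitly.
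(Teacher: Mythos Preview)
Your proof is correct and follows essentially the same approach as the paper: the paper decomposes $Kp^{-\mu}K = \bigcup k_i p^{-\mu} K$ and performs the same substitution $g \mapsto y k_i^{-1}$ (your $g \mapsto g k^{-1}$) to exploit the right $K$-invariance of $c_\beta$ and $c_\lambda$, then finishes with the same identity $c_\lambda(gp^{-\mu}) = c_{-\lambda}(p^\mu g^{-1})$. Your packaging of these steps as ``$F$ is $K$-bi-invariant'' is a cleaner phrasing of the same computation.
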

\begin{proof}
Write $Kp^{-\mu}K$ as the disjoint union $\cup k_{i}p^{-\mu}K$, where $k_{i} \in K$.  Then 
\begin{multline*}
\int_{G} c_{-\mu}(g') \int_{G} c_{\beta}(g) c_{\lambda}(gg') dg dg' = \int_{Kp^{-\mu}K}  \int_{G} c_{\beta}(g) c_{\lambda}(gg')dg dg' = \sum_{k_{i}p^{-\mu}} \int_{K} \int_{G} c_{\beta}(g)c_{\lambda}(gk_{i}p^{-\mu}k) dg dk \\
= \sum_{k_{i}p^{-\mu}} \int_{G} c_{\beta}(g) c_{\lambda}(gk_{i}p^{-\mu}) dg = \sum_{k_{i}p^{-\mu}}\int_{G} c_{\beta}(yk_{i}^{-1})c_{\lambda}(yp^{-\mu})dy = \sum_{k_{i}p^{-\mu}} \int_{G} c_{\beta}(y)c_{\lambda}(yp^{-\mu})dy \\ 
= meas.(Kp^{-\mu}K) \int_{G} c_{\beta}(g)c_{\lambda}(gp^{-\mu})dg = meas.(Kp^{-\mu}K) \int_{G} c_{-\lambda}(p^{\mu}g^{-1})c_{\beta}(g)dg.
\end{multline*}
\end{proof}

\begin{proposition}
Let $\lambda \in \Lambda_{2n}^{+}$ and $\mu \in \Lambda_{n}^{+}$ and fix a prime $p \neq 2$.  Suppose $g^{\lambda}_{\mu, \beta}(p) = 0$ for all $\beta \in \Lambda_{2n}^{+}$ with all parts occurring with even multiplicity.  Then the integral 
\begin{equation*}
\frac{1}{\int_{T} \tilde \Delta_{K}^{(n)}(x; \pm p^{-1/2},0,0; p^{-1}) dT} \int_{T} P_{\lambda}^{(2n)}(x_{i}^{\pm 1}; p^{-1}) K_{\mu}^{BC_{n}}(x; p^{-1}; \pm p^{-1/2},0,0) \tilde \Delta_{K}^{(n)}(x; \pm p^{-1/2},0,0; p^{-1}) dT 
\end{equation*}
vanishes.  
\end{proposition}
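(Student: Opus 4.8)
The plan is to start from the expansion \eqref{pospts}, which writes the integral in question, up to the nonzero scalar $p^{\langle\mu,\rho_{1}\rangle-\langle\lambda,\rho_{2}\rangle}V_{0}/V_{\mu}$, as
\[
\sum_{\substack{\beta\in\Lambda_{2n}^{+}\\ \beta=\nu^{2}}} meas.(Kp^{\lambda}Kg_{\mu}\cap K'p^{\beta}K'),
\]
with the measure taken on $H$. Since each summand is a nonnegative real number and the prefactor is nonzero, it suffices to prove that every individual term vanishes. I would argue by contraposition: assuming $meas.(Kp^{\lambda}Kg_{\mu}\cap K'p^{\beta}K')\neq0$ for some $\beta=\nu^{2}$, I will deduce $g^{\lambda}_{\mu,\beta}(p)\neq0$, where $\mu\in\Lambda_{n}^{+}$ is padded to $\widehat\mu=(\mu_{1},\dots,\mu_{n},0,\dots,0)\in\Lambda_{2n}^{+}$ so that the Hall polynomial is defined for $Gl_{2n}$; this contradicts the hypothesis and forces each term, hence the whole integral, to vanish.

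First I would unwind the nonvanishing of the $H$-measure into a statement about double cosets of $G$. If the intersection is nonempty, pick $h$ in it and write $h=k_{1}'p^{\beta}k_{2}'$ with $k_{i}'\in K'$. Since $K'\subset K$, the membership $hg_{\mu}^{-1}\in Kp^{\lambda}K$ reduces to $p^{\beta}k_{2}'g_{\mu}^{-1}\in Kp^{\lambda}K$. Next I would use the explicit form $g_{\mu}^{-1}=\operatorname{diag}(1,\dots,1,p^{\mu_{n}},\dots,p^{\mu_{1}})=p^{\mu'}$ with $\mu'=(0,\dots,0,\mu_{n},\dots,\mu_{1})$, and note that $p^{\mu'}$ is conjugate by a permutation matrix in $K$ to $p^{\widehat\mu}$, so $p^{\mu'}\in Kp^{\widehat\mu}K$. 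Substituting and absorbing all $K$-factors then yields $p^{\lambda}\in Kp^{\beta}Kp^{\widehat\mu}K$.

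Finally I would translate this into Hall-polynomial language using the structure-constant description $c_{\beta}\star c_{\widehat\mu}=\sum_{\kappa}g^{\kappa}_{\beta,\widehat\mu}(p)\,c_{\kappa}$. The support of this convolution is contained in $Kp^{\beta}K\cdot Kp^{\widehat\mu}K=Kp^{\beta}Kp^{\widehat\mu}K$, and conversely, because both defining conditions $g\in Kp^{\beta}K$ and $g^{-1}x\in Kp^{\widehat\mu}K$ are open in the $p$-adic topology, membership $p^{\lambda}\in Kp^{\beta}Kp^{\widehat\mu}K$ forces the defining integral $g^{\lambda}_{\beta,\widehat\mu}(p)=(c_{\beta}\star c_{\widehat\mu})(p^{\lambda})=\int_{G}c_{\beta}(g)c_{\widehat\mu}(g^{-1}p^{\lambda})\,dg$ to be a strictly positive integer. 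By the symmetry $g^{\lambda}_{\beta,\widehat\mu}=g^{\lambda}_{\widehat\mu,\beta}$ this is exactly $g^{\lambda}_{\mu,\beta}(p)\neq0$, the desired contradiction.

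The delicate point, and the step I expect to require the most care, is the passage in the middle paragraph: one must correctly track the embedding $H\hookrightarrow G$ and the explicit shape of $g_{\mu}$, carry out the index bookkeeping (the padding of $\mu$ to $\widehat\mu$ and the permutation relating $p^{\mu'}$ to $p^{\widehat\mu}$), and reduce the $H$-measure intersection to the membership $p^{\lambda}\in Kp^{\beta}Kp^{\widehat\mu}K$. The accompanying positivity argument — that a nonempty, and therefore open, intersection of double cosets has positive Haar measure and hence yields a nonzero Hall polynomial — is where one should be explicit; the remaining manipulations are formal. Note also that only the implication ``nonzero measure $\Rightarrow$ nonzero $g^{\lambda}_{\mu,\beta}(p)$'' is needed, so the enlargement $K'\subset K$ (which can only increase the relevant double coset) is harmless and runs in the correct direction.
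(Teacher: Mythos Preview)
Your argument is correct and shares the same spine as the paper's proof: start from \eqref{pospts}, pass from the $K'$-double coset to the ambient $K$-double coset, and identify the resulting $G$-quantity with a Hall polynomial $g^{\lambda}_{\widehat\mu,\beta}(p)$. The difference is in execution. The paper proceeds quantitatively: it writes $meas.(Kp^{\lambda}Kg_{\mu}\cap K'p^{\beta}K')$ as a sum over $K'$-cosets, enlarges each to a $K$-coset to obtain the inequality
\[
meas.(Kp^{\lambda}Kg_{\mu}\cap K'p^{\beta}K')\;\le\; meas.(Kp^{\lambda}Kg_{\mu}\cap Kp^{\beta}K),
\]
and then invokes the separate Lemma~\ref{2d} (a two-variable integral manipulation) to identify the right-hand side explicitly as $\dfrac{meas.(Kp^{-\lambda}K)}{meas.(Kp^{\widehat\mu}K)}\,g^{\lambda}_{\widehat\mu,\beta}(p)$. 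Your route is qualitative and more direct: you bypass Lemma~\ref{2d} entirely by observing that nonemptiness of the intersection already forces $p^{\lambda}\in Kp^{\beta}Kp^{\widehat\mu}K$, and then that this membership makes the defining integral for $(c_{\beta}\star c_{\widehat\mu})(p^{\lambda})$ a Haar measure of a nonempty open set, hence strictly positive. What the paper's version buys is an explicit upper bound (potentially reusable for estimates), while yours buys a shorter argument that needs no auxiliary lemma and uses only the contrapositive actually required here.
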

\begin{proof}
The starting point is (\ref{pospts}) from the discussion above, recall that we have
\begin{multline*}
\frac{1}{Z} \int_{T} P_{\lambda}^{(2n)}(x_{i}^{\pm 1}; p^{-1}) K_{\mu}^{BC_{n}}(x; p^{-1}; \pm p^{-1/2},0,0) \tilde \Delta_{K}^{(n)}(x; \pm p^{-1/2},0,0; p^{-1}) dT \\
= p^{\langle\mu, \rho_{1}\rangle - \langle\lambda, \rho_{2}\rangle} \frac{V_{0}}{V_{\mu}} \sum_{\substack{\beta \in \Lambda_{2n} ^{+}\\ \beta = \nu_{1} \dots \nu_{n} \nu_{n} \dots \nu_{1} \\ \text{for some $\nu$}}} meas.(Kp^{\lambda}Kg_{\mu} \cap K'p^{\beta}K').
\end{multline*}
Now if we write
\begin{equation*}
(Kp^{\lambda}Kg_{\mu} \cap K'p^{\beta}K') = \cup K' x_{i},
\end{equation*}
a disjoint union and $x_{i} \in p^{\beta}K'$, then the above measure is the number of $x_{i}$'s.  But we also have
\begin{equation*}
\cup K x_{i} \subset (Kp^{\lambda}Kg_{\mu} \cap Kp^{\beta}K),
\end{equation*}
and the union is disjoint ($k_{1}x_{i} = k_{2}x_{j}$ implies $k_{2}^{-1}k_{1}x_{i} = x_{j}$, but $x_{i},x_{j} \in H$ so $k_{2}^{-1}k_{1} \in K'$, a contradiction to the definition of the $x_{j}$'s).  Thus,
\begin{equation*}
meas.(Kp^{\lambda}Kg_{\mu} \cap K'p^{\beta}K') = \# \{x_{i} \} = meas.(\cup Kx_{i}) \leq meas.(Kp^{\lambda}Kg_{\mu} \cap Kp^{\beta}K),
\end{equation*}
so that
\begin{equation*}
\int_{K'p^{\beta}K'}c_{\lambda}(hg_{\mu}^{-1}) dh \leq \int_{Kp^{\beta}K}c_{\lambda}(gg_{\mu}^{-1})dg = \int_{G} c_{\beta}(g) c_{\lambda}(gg_{\mu}^{-1})dg = \int_{G} c_{-\lambda}(g_{\mu}g^{-1})c_{\beta}(g)dg.
\end{equation*}
Recall that $g_{\mu} = p^{(0^{n},-\mu_{n}, \dots, -\mu_{1})}$.  By Lemma (\ref{2d}), we have
\begin{equation*}
\int_{G} c_{-\lambda}(g_{\mu}g^{-1})c_{\beta}(g)dg = \frac{1}{meas.(Kp^{\mu 0^{n}}K) } \int_{G} c_{\mu 0^{n}}(g') \int_{G} c_{\beta}(g) c_{\lambda}(gg')dgdg'.
\end{equation*}
But, using a change of variables, we have
\begin{multline*}
 \int_{G} c_{\mu 0^{n}}(g') \int_{G} c_{\beta}(g) c_{\lambda}(gg')dgdg' = \int_{G} c_{\mu0^{n}}(g') \int_{G} c_{\beta}(yg'^{-1})c_{\lambda}(y) dy dg' \\
 = \int_{G} c_{\mu0^{n}}(g') \int_{G} c_{\beta}(y^{-1}g'^{-1})c_{-\lambda}(y) dy dg'  
 = \int_{G} c_{-\lambda}(y)\int_{G} c_{\mu 0^{n}}(g')  c_{-\beta}(g'y) dg'dy \\
 = meas.(Kp^{-\lambda}K) \int_{G} c_{\beta}(p^{\lambda}g^{-1})c_{\mu0^{n}}(g) dg = meas.(Kp^{-\lambda}K) g_{\beta, \mu0^{n}}^{\lambda}(p).
 \end{multline*}
Thus,
\begin{equation*}
\int_{G} c_{-\lambda}(g_{\mu}g^{-1})c_{\beta}(g)dg = \frac{meas.(Kp^{-\lambda}K)}{meas.(Kp^{\mu0^{n}}K)} g_{\beta, \mu0^{n}}^{\lambda}(p) = \frac{meas.(Kp^{-\lambda}K)}{meas.(Kp^{\mu0^{n}}K)} g_{ \mu0^{n}, \beta}^{\lambda}(p),
\end{equation*}
where the last equality follows from Fact 3 about Hall polynomials above.  

Thus, we have
\begin{equation*}
\int_{H} c_{\lambda}(hg_{\mu}^{-1})dh = \sum_{\substack{\beta \in \Lambda_{2n} ^{+}\\ \beta = \nu_{1} \dots \nu_{n} \nu_{n} \dots \nu_{1} \\ \text{for some $\nu$}}} meas.(Kp^{\lambda}Kg_{\mu} \cap K'p^{\beta}K') \leq \sum_{\substack{\beta \in \Lambda_{2n}^{+} \\ \beta = \nu^{2} \\ \text{for some $\nu$}}} \frac{meas.(Kp^{-\lambda}K)}{meas.(Kp^{\mu0^{n}}K)} g_{ \mu0^{n}, \beta}^{\lambda}(p).
\end{equation*}

Since by assumption $g^{\lambda}_{\mu, \beta}(p) = 0$ for all $\beta= \nu^{2} \in \Lambda_{2n}^{+}$, the result follows.
\end{proof}

\begin{proposition} \label{HLSchurvan}
Let $\lambda, \mu \in \Lambda_{n}^{+}$.  Then the integral
\begin{equation*}
\frac{1}{\int_{T} \tilde \Delta_{K}^{(n)}(x;0,0,0,0;t) dT}\int_{T} s_{\lambda}^{(2n)}(x_{i}^{\pm 1}) sp_{\mu}(x_{1}, \dots, x_{n}) \tilde \Delta_{K}^{(n)}(x;0,0,0,0;t) dT
\end{equation*}
vanishes if and only if the integral
\begin{equation*}
\frac{1}{\int_{T} \tilde \Delta_{K}^{(n)}(x; \pm \sqrt{t},0,0;t) dT} \int_{T} P_{\lambda}^{(2n)}(x_{i}^{\pm 1};t) K_{\mu}^{BC_{n}}(x;t; \pm \sqrt{t},0,0) \tilde \Delta_{K}^{(n)}(x; \pm \sqrt{t},0,0;t)dT
\end{equation*}
vanishes as a rational function of $t$.
\end{proposition}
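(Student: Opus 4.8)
The plan is to reduce the vanishing of each of the two integrals to the single combinatorial condition that $c^{\lambda}_{\mu,\beta}=0$ for every $\beta\in\Lambda_{2n}^{+}$ of the form $\beta=\nu^{2}$, and then read off the asserted equivalence. The bridge is the classical symplectic branching rule on one side and the Hall polynomials on the other.

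First I would treat the Schur integral. Since $s_{\lambda}^{(2n)}$ and $sp_{\mu}$ carry no $t$-dependence, I would specialize the density to $t=0$, where $\tilde\Delta_{K}^{(n)}(x;0,0,0,0;t)$ degenerates to the Weyl measure of $Sp(2n)$ and the symplectic characters $\{sp_{\mu'}\}$ become orthonormal. The normalized integral then computes the branching multiplicity of $sp_{\mu}$ in $s_{\lambda}^{(2n)}(x^{\pm1})$, which by Theorem \ref{WL} equals $\sum_{\beta=\nu^{2}}c^{\lambda}_{\mu,\beta}$. Because the Littlewood--Richardson coefficients are non-negative integers, this sum vanishes if and only if $c^{\lambda}_{\mu,\beta}=0$ for all $\beta=\nu^{2}$; this is precisely the integral rephrasing of Theorem \ref{WL} recorded after its statement.

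Next I would establish the forward implication for the Koornwinder/Hall--Littlewood integral. Assuming $c^{\lambda}_{\mu,\beta}=0$ for all $\beta=\nu^{2}$, the first of the Hall-polynomial facts recalled above (namely $c^{\lambda}_{\mu\nu}=0\Rightarrow g^{\lambda}_{\mu\nu}(t)\equiv0$) gives $g^{\lambda}_{\mu,\beta}(t)\equiv 0$ for all such $\beta$. The previous Proposition then shows that for $t=p^{-1}$ with $p$ any odd prime the integral vanishes; since the left-hand side is a rational function of $t$ and we have produced infinitely many zeros, it vanishes identically in $t$. For the reverse implication I would again pass to $t=0$: there $P_{\lambda}^{(2n)}(x^{\pm1};0)=s_{\lambda}^{(2n)}(x^{\pm1})$, the Koornwinder parameters $\pm\sqrt{t}$ specialize to $0$, and one identifies $K_{\mu}^{BC_{n}}(x;0;0,0,0,0)=sp_{\mu}$ together with the collapse of the $q=0$ Koornwinder density to the $Sp(2n)$ Weyl measure. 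Hence the Koornwinder integral at $t=0$ coincides with the symplectic integral, i.e. with $\sum_{\beta=\nu^{2}}c^{\lambda}_{\mu,\beta}$. Therefore, if the Koornwinder integral vanishes identically in $t$ its value at $t=0$ is zero, which forces every $c^{\lambda}_{\mu,\beta}=0$, and by the first step this is exactly the vanishing of the Schur integral. Combining the two implications yields the proposition.

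The main obstacle I expect is justifying this $t=0$ degeneration in the reverse step: one must verify that the rational function has no pole at $t=0$ and, crucially, identify the degenerate Koornwinder polynomial $K_{\mu}^{BC_{n}}(x;0;0,0,0,0)$ with the symplectic character $sp_{\mu}$ and the specialized density with the symplectic Weyl measure, so that the two integrals genuinely agree at $t=0$. An alternative route, which avoids this identification but is technically harder, is to argue directly from (\ref{pospts}): that expression presents the integral, up to a positive factor, as the non-negative sum $\sum_{\beta=\nu^{2}}meas.(Kp^{\lambda}Kg_{\mu}\cap K'p^{\beta}K')$, so its vanishing forces each measure to vanish, but the previous Proposition only furnishes an \emph{upper} bound of these measures by the $g^{\lambda}_{\mu0^{n},\beta}(p)$. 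To close the argument this way one would need the matching lower bound $meas.(Kp^{\lambda}Kg_{\mu}\cap K'p^{\beta}K')>0$ whenever $c^{\lambda}_{\mu,\beta}\neq0$, for instance by exhibiting an explicit element of the intersection or by comparing degrees via the leading-coefficient fact $g^{\lambda}_{\mu\nu}(t)=c^{\lambda}_{\mu\nu}\,t^{\,n(\lambda)-n(\mu)-n(\nu)}+\cdots$; establishing such positivity of the $p$-adic measure is the genuinely delicate point, and the $t\to0$ limit is the cleaner way around it.
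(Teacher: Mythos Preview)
Your proposal is correct and follows essentially the same route as the paper: the paper handles the ``if'' direction in one line by setting $t=0$ to recover the Schur case, and for the converse uses Theorem~\ref{WL} to get $c^{\lambda}_{\mu,\beta}=0$, then Fact~1 to get $g^{\lambda}_{\mu,\beta}(p)=0$, then the previous Proposition to kill the integral at every $t=p^{-1}$. Your write-up is in fact more careful than the paper's, which simply asserts the $t\to 0$ degeneration without addressing the identifications $K_{\mu}^{BC_{n}}(x;0;0,0,0,0)=sp_{\mu}$ and of the density with the symplectic Weyl measure that you rightly flag as the point needing justification.
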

\begin{proof}
The ``if" direction follows by setting $t=0$ in the Hall-Littlewood polynomial integral to obtain the Schur case.  We consider the other direction: i.e., suppose the integral involving Schur polynomials vanishes.  We will show the integral involving the Hall-polynomial vanishes.

Fix an odd prime $p$.  By Theorem \ref{WL}, since the above Schur integral vanishes, we must have $c^{\lambda}_{\mu, \beta} = 0$ for all $\beta \in \Lambda_{2n}^{+}$ with all parts occurring with even multiplicity.  

By Fact 1 about Hall polynomials above, this implies $g^{\lambda}_{\mu, \beta}(p)=0$ for all $\beta \in \Lambda_{2n}^{+}$ with all parts occurring with even multiplicity.  Thus, by the previous proposition, we have
\begin{equation*}
\frac{1}{\int_{T} \tilde \Delta_{K}^{(n)}(x; \pm p^{-1/2},0,0; p^{-1}) dT} \int_{T} P_{\lambda}^{(2n)}(x_{i}^{\pm 1}; p^{-1}) K_{\mu}^{BC_{n}}(x; p^{-1}; \pm p^{-1/2},0,0) \tilde \Delta_{K}^{(n)}(x; \pm p^{-1/2},0,0; p^{-1}) dT = 0.
\end{equation*}
This shows that the integral in question vanishes for all values $t = p^{-1}$, $p$ an odd prime.  Thus it vanishes for all values of $t$.  
\end{proof}

We are now ready to provide a proof of Theorem \ref{genvan}, mentioned in the Introduction.

\begin{proof}[Proof of Theorem \ref{genvan}]
Follows from Proposition \ref{HLSchurvan} and Theorem \ref{WL}.
\end{proof}

\begin{example}[1]
Let $\lambda$ have all parts occurring with even multiplicity, and $\mu = (r)$ only one part (assume $r \neq 0)$.  Let $\beta$ have all parts occurring with even multiplicity.  We have $g^{\lambda}_{\beta, (r)}(t) = 0$ unless $\lambda - \beta$ is a horizontal $r$-strip \cite{Mac}.  But $\lambda - \beta$ is a horizontal-strip if and only if $\lambda_{1} \geq \beta_{1} \geq \lambda_{2} \geq \beta_{2} \cdots$ (interlaced), so $\lambda = \beta$.  Thus $g^{\lambda}_{(r), \beta}(t) = 0$ for all $\beta$ with all parts occurring with even multiplicity.  So for these conditions on $\lambda$, $\mu$, the integral of Theorem \ref{genvan} part (1) vanishes.  
\end{example}

\begin{example}[2]
Let $\mu = 0$.  Then by \cite{Mac}, $c^{\lambda}_{\mu, \beta} = 0$ for all $\beta \neq \lambda$.  Thus, the integral of Theorem \ref{genvan} part (1) vanishes unless $\lambda = \beta$, where $\beta$ has all parts occuring with even multiplicity.
\end{example}


\begin{thebibliography}{A}

\bibitem{C}
W.~Casselman, {\it The unramified principal series of p-adic groups.  I.  The spherical function}, Compositio Math., {\bf 40(3)} (1980), 387--406.

\bibitem{CS}
W.~Casselman and J. Shalika, {\it The unramified principal series of p-adic groups.  II.  The Whittaker function}, Compositio Math., {\bf 41(2)} (1980), 207--231.

\bibitem{HS}
Y.~Hironaka and F.~Sat{\=o}, {\it Spherical functions and local densities of alternating forms}, Amer. J. Math., {\bf 110} (1988), 473--512.

\bibitem{H}
S.~Helgason, {\it Groups and geometric analysis}, volume 113 of {\it Pure and Applied Mathematics.}  Academic Press, Orlando, FL, 1984.

\bibitem{Ka2}
N.~Kawanaka, {\it On subfield symmetric spaces over a finite field}, Osaka J.
  Math., {\bf 28} (1991), 759--791.

\bibitem{Ka1}
N.~Kawanaka, {\it A {$q$}-series identity involving {S}chur functions and
  related topics}, Osaka J. Math., {\bf 36} (1999), 157--176.

\bibitem{K}
T.~H. Koornwinder, {\it Askey-{W}ilson polynomials for root systems of type
  {$BC$}}, in Hypergeometric functions on domains of positivity, {J}ack
  polynomials, and applications ({T}ampa, {FL}, 1991), vol.~138 of Contemp.
  Math., Amer. Math. Soc., Providence, RI, 1992, 189--204.
  
  \bibitem{L}
  D.~E. Littlewood, {\it The Theory of Group Characters}, (Oxford University Press, Oxford, 1950).
  
  

\bibitem{MacP}
I.~G. Macdonald, {\it Spherical functions on a group of {$p$}-adic type},
  Ramanujan Institute, Centre for Advanced Study in Mathematics,University of
  Madras, Madras, 1971.
\newblock Publications of the Ramanujan Institute, No. 2.

\bibitem{Mac}
I.~G. Macdonald, {\it Symmetric functions and {H}all polynomials}, Oxford
  Mathematical Monographs, Oxford University Press, New York, second~ed., 1995.
  
\bibitem{Offen}
 O.~Offen, {\it Relative spherical functions on $p$-adic symmetric spaces (three cases)}, Pacific J. Math., {\bf 215} (2004), 97--149.

\bibitem{R}
E.~M. Rains, {\it {${\rm BC}_n$}-symmetric polynomials}, Transform. Groups,
  {\bf 10} (2005), 63--132.

\bibitem{RV}
E.~M. Rains and M.~Vazirani, {\it Vanishing integrals of {M}acdonald and
  {K}oornwinder polynomials}, Transform. Groups, {\bf 12} (2007), 725--759.
  
 \bibitem{VV}
V.~Venkateswaran, {\it Vanishing integrals for {H}all-{L}ittlewood polynomials}, Transform. Groups, {\bf 17(1)} (2012), 259--302.

\bibitem{VV2}
V.~Venkateswaran, {\it {H}all-{L}ittlewood polynomials of type {$BC$}}, preprint available at http://arxiv.org/abs/1209.2933.

\bibitem{W}
S.~O. Warnaar, {\it Rogers-{S}zeg\"o polynomials and {H}all-{L}ittlewood
  symmetric functions}, J. Algebra, {\bf 303} (2006), 810--830.

\end{thebibliography}
\end{document}